\PassOptionsToPackage{unicode}{hyperref}
\PassOptionsToPackage{hyphens}{url}
\PassOptionsToPackage{dvipsnames,svgnames,x11names}{xcolor}
\documentclass[12pt]{article}
\usepackage{latexsym,amssymb, epsfig, amsmath,amsfonts, amsthm}

\usepackage{yfonts}
\usepackage{iftex}
\ifPDFTeX
  \usepackage[T1]{fontenc}
  \usepackage[utf8]{inputenc}
  \usepackage{textcomp} 
\else 
  \usepackage{unicode-math}
  \defaultfontfeatures{Scale=MatchLowercase}
  \defaultfontfeatures[\rmfamily]{Ligatures=TeX,Scale=1}
\fi
\usepackage{lmodern}
\usepackage{mathrsfs}
\usepackage{booktabs,threeparttable,multirow,siunitx,array,adjustbox}

\usepackage[normalem]{ulem}

\usepackage[linesnumbered, ruled, vlined]{algorithm2e}
\SetKwRepeat{Do}{do}{while}%

\ifPDFTeX\else  
\fi
\IfFileExists{upquote.sty}{\usepackage{upquote}}{}
\IfFileExists{microtype.sty}{
  \usepackage[]{microtype}
  \UseMicrotypeSet[protrusion]{basicmath} 
}{}
\makeatletter
\@ifundefined{KOMAClassName}{
  \IfFileExists{parskip.sty}{%
    \usepackage{parskip}
  }{
    \setlength{\parindent}{0pt}
    \setlength{\parskip}{6pt plus 2pt minus 1pt}}
}{
  \KOMAoptions{parskip=half}}
\makeatother
\usepackage{xcolor}
\makeatletter
\ifx\paragraph\undefined\else
  \let\oldparagraph\paragraph
  \renewcommand{\paragraph}{
    \@ifstar
      \xxxParagraphStar
      \xxxParagraphNoStar
  }
  \newcommand{\xxxParagraphStar}[1]{\oldparagraph*{#1}\mbox{}}
  \newcommand{\xxxParagraphNoStar}[1]{\oldparagraph{#1}\mbox{}}
\fi
\ifx\subparagraph\undefined\else
  \let\oldsubparagraph\subparagraph
  \renewcommand{\subparagraph}{
    \@ifstar
      \xxxSubParagraphStar
      \xxxSubParagraphNoStar
  }
  \newcommand{\xxxSubParagraphStar}[1]{\oldsubparagraph*{#1}\mbox{}}
  \newcommand{\xxxSubParagraphNoStar}[1]{\oldsubparagraph{#1}\mbox{}}
\fi
\makeatother

\usepackage{longtable,booktabs,array}
\usepackage{calc} 
\usepackage{etoolbox}
\usepackage{enumitem}
\makeatletter
\patchcmd\longtable{\par}{\if@noskipsec\mbox{}\fi\par}{}{}
\makeatother
\IfFileExists{footnotehyper.sty}{\usepackage{footnotehyper}}{\usepackage{footnote}}
\makesavenoteenv{longtable}
\usepackage{graphicx}
\makeatletter
\def\maxwidth{\ifdim\Gin@nat@width>\linewidth\linewidth\else\Gin@nat@width\fi}
\def\maxheight{\ifdim\Gin@nat@height>\textheight\textheight\else\Gin@nat@height\fi}
\makeatother

\setkeys{Gin}{width=\maxwidth,height=\maxheight,keepaspectratio}
\makeatletter
\def\fps@figure{htbp}
\makeatother

\makeatletter
\@ifpackageloaded{caption}{}{\usepackage{caption}}
\AtBeginDocument{%
\ifdefined\contentsname
  \renewcommand*\contentsname{Table of contents}
\else
  \newcommand\contentsname{Table of contents}
\fi
\ifdefined\listfigurename
  \renewcommand*\listfigurename{List of Figures}
\else
  \newcommand\listfigurename{List of Figures}
\fi
\ifdefined\listtablename
  \renewcommand*\listtablename{List of Tables}
\else
  \newcommand\listtablename{List of Tables}
\fi
\ifdefined\figurename
  \renewcommand*\figurename{Figure}
\else
  \newcommand\figurename{Figure}
\fi
\ifdefined\tablename
  \renewcommand*\tablename{Table}
\else
  \newcommand\tablename{Table}
\fi
}
\@ifpackageloaded{float}{}{\usepackage{float}}
\floatstyle{ruled}
\@ifundefined{c@chapter}{\newfloat{codelisting}{h}{lop}}{\newfloat{codelisting}{h}{lop}[chapter]}
\floatname{codelisting}{Listing}

\makeatother
\makeatletter
\@ifpackageloaded{caption}{}{\usepackage{caption}}
\@ifpackageloaded{subcaption}{}{\usepackage{subcaption}}
\makeatother

\ifLuaTeX
  \usepackage{selnolig}  
\fi
\usepackage[]{natbib}
\usepackage{bookmark}

\IfFileExists{xurl.sty}{\usepackage{xurl}}{} 
\hypersetup{
  pdftitle={Title},
  pdfauthor={Author 1; Author 2},
  pdfkeywords={3 to 6 keywords, that do not appear in the title},
  colorlinks=true,
  linkcolor={blue},
  filecolor={Maroon},
  citecolor={Blue},
  urlcolor={Blue},
  pdfcreator={LaTeX via pandoc}}

\newcommand{\anon}{1}

\newtheorem{assumption}{Assumption}[section]
\newcommand{\wt}{\widetilde}
\def\1{\mbox{\bf 1}}
\def\R{\mathbb{R}}

\def\P{\mathbb{P}}
\def\E{\mathbb{E}}

\def\R{\mathbb{R}}

\def\Z{\mathbb{Z}}

\def\dim{\mbox{dim}}

\newcolumntype{E}{>{\centering\arraybackslash}m{1.9cm}} 

\newcommand{\op}{\operatorname{op}}

\newtheorem{theo}{Theorem}
\newtheorem{lem}{Lemma}

\newtheorem{cor}{Corollary}
\newtheorem{Def/Prop}{Definition-Proposition}

\providecommand{\wt}{\widetilde}
\providecommand{\bV}{\mathbf{V}}
\providecommand{\sfE}{\mathsf{E}}
\providecommand{\sfA}{\mathsf{A}}
\providecommand{\sfB}{\mathsf{B}}

\begin{document}

\def\spacingset#1{\renewcommand{\baselinestretch}%
{#1}\small\normalsize} 
\spacingset{1}


\if1\anon
{
  \title{\bf  Uniform Gaussian Approximation for The  Quasi-Likelihood  Estimator for a Weakly Dependent Nonlinear Time Series Models}
  \author{ Zinsou Max Debaly  \\ Université du Québec à Montréal, Département de Mathématiques \\  Arsene Brice Zotsa-Ngoufack   \\ 
   Vanderbilt University, Department of Mathematics
    }
      \maketitle
 } \fi

\if0\anon
{
  \bigskip
  \bigskip
  \bigskip
  \begin{center}
    {\LARGE\bf Uniform Gaussian Approximation for The  Quasi-Likelihood  Estimator for a Weakly Dependent Nonlinear Time Series Models}
\end{center}
  
} \fi

 \begin{abstract}
We study estimation and inference for a semiparametric class  of 
time series models that specify only the conditional expectation, which is a known
link function applied to a linear combination of past observations and covariates.
The class covers count, binary, bounded and conditionally heteroskedastic
responses within a single formulation, and the parameter is estimated by a
quasi-likelihood estimating equation based on the first conditional moment. Under
stationarity and a weak-dependence condition expressed through the functional
dependence measure, we establish two results. First, using a Fuk--Nagaev
inequality for weakly dependent sequences, we show that the estimator is localized
in a shrinking neighbourhood of the true value with probability $1-o(n^{-1/2})$.
Second, combining a Berry--Esseen bound for weakly dependent sequences with a
Gaussian anti-concentration argument to control the remainder of the linear
expansion, we obtain a Berry--Esseen bound for linear projections of the
estimator, uniform over projection directions. From the projected bound we derive
studentized confidence intervals with explicit coverage error and a conservative
Bonferroni test for linear hypotheses on the parameters. For real data analysis, we extend the Beta
autoregression for double-bounded data to an arbitrary link given by the inverse
of a distribution function, and apply it to ten pairwise realized correlations of
large-cap technology-stock returns, using Nasdaq and Dow~Jones index returns as
covariates.
\end{abstract}

\noindent%
{\it Keywords:}
concentration bounds;
Berry--Esseen bounds;
functional dependence coefficients;
realized correlation modeling.
\vfill

\spacingset{1.8}

\section{Introduction}

Non-linear time series models have attracted considerable attention in recent
years. For integer-valued data, the leading examples are the Poisson
autoregression and INGARCH class \citep{fokianos2009poisson}, its log-linear
variant \citep{fokianos2012nonlinear}, the softplus specification
\citep{weibSoftplus}, and more general nonlinear count models
\citep{davis2016theory,ahmad2016poisson}, with heavy-tailed
\citep{gorgi2020beta} and multivariate \citep{fokianos2020multivariate}
extensions. For binary and categorical data, observation-driven models with
feedback are studied in \citet{moysiadis2014binary}; for double-bounded
continuous responses, such as rates, proportions and realized correlations, in
the beta autoregression of \citet{gorgi2023beta}; and for conditionally
heteroskedastic real-valued data in the GARCH class \citep{francq2019garch}.
Conditions for stationarity, ergodicity and weak dependence of the underlying
recursions are available in \citet{debaly_truquet_2021}. A common structure runs
through these models: the conditional mean is a known link function applied to a
linear combination of past observations and covariates. Exploiting only this
common structure, we study the semiparametric class
\begin{equation}\label{eq:intro-model}
  \E[Y_t\mid\mathcal F_{t-1}]
  =g\Big(\omega+\sum_{i=1}^{p}\alpha_i Y_{t-i}+\gamma^\top X_{t-1}\Big),
\end{equation}
where $(Y_t,X_t)_{t\in\Z}$ is stationary with natural filtration $\mathcal F_t$
and $g$ is a known link, and where the conditional distribution is left
unspecified. The parameter is estimated by the quasi-likelihood estimating
equation based on the first conditional moment, which reduces to conditional
least squares when $g$ is the identity and applies verbatim across count,
binary, bounded and conditionally heteroskedastic responses.

For this class, estimation theory is well developed, but the distributional
results available for the estimators are, in essence, statements of asymptotic
normality: after centring and scaling, the estimator converges in distribution to
a Gaussian law \citep{ahmad2016poisson,davis2016theory,debaly2023multivariate}.
Convergence in distribution is a statement about the limit only. It does not
quantify the accuracy of the Gaussian approximation at a given sample size, and
hence it does not, by itself, control the error of the confidence intervals and
tests built on it: the nominal level of a Wald interval is justified only in the
limit, with no indication of how large $n$ must be, nor of how the error depends
on the strength of the serial dependence. A Berry--Esseen bound replaces this
qualitative statement by a uniform one with an explicit rate, and thereby turns
asymptotic validity into a quantified coverage error.

The purpose of this paper is to establish such a bound, in a form that is
directly usable for inference, for the whole class \eqref{eq:intro-model}. To the
best of our knowledge, no finite-sample distributional result of this type is
available for   non-linear time series models, including in the
leading Poisson autoregression case; the existing literature stops at asymptotic
normality. Two features make the problem non-standard. First, the estimator is
defined implicitly, as the root of an estimating equation rather than as a sample
mean, so a Berry--Esseen theorem for sums does not apply directly: the
approximation error of the estimator is governed jointly by the leading score
term and by the second-order remainder of the linear expansion, and the two must
be controlled at compatible accuracies. Second, the estimating equation is a
quasi-likelihood, not a likelihood; the conditional law is unspecified, so the
expansions of the likelihood that underlie the classical refinements are not
available, and the standard error must be obtained from a sandwich matrix whose
fluctuation has itself to be controlled at the required rate.

Our contributions are the following.

\begin{enumerate}
\item[(i)] \emph{Localization with exceptional probability $o(n^{-1/2})$.} We
show that, with probability $1-o(n^{-1/2})$, the estimating equation admits a
unique root and that root lies within $O(\sqrt{\log n/n})$ of the true value
(Theorem~\ref{th:concentration}). What matters here is not the radius, which is
larger than the estimation rate, but the accuracy of the exceptional
probability: a distributional bound of order $n^{-1/2}$ cannot be transferred
onto an event whose complement is only $o(1)$, so the usual consistency
statements are not sufficient for what follows. The bound is obtained by a
Brouwer fixed-point construction on an event built from a Fuk--Nagaev
concentration inequality for functionally dependent sequences
\citep{liu2013probability}, applied to the score, to the Hessian and to the
second-order remainder simultaneously.

\item[(ii)] \emph{A Berry--Esseen bound for the studentized estimator, uniform
over directions.} We prove that the studentized statistic
satisfies a Berry--Esseen bound of order $\log n/\sqrt n$, uniformly in the
projection direction   and in the argument of the distribution function
(Theorem~\ref{th:BEprojection}). The Gaussian approximation of the leading score term is
supplied by the Berry--Esseen theorem of \citet{jirak2016berry} under the
functional dependence measure of \citet{Wu}; the contribution is the transfer of
that bound to the estimator, which we carry out by combining the localization of
(i) with a Gaussian anti-concentration argument applied to half-spaces. The bound
is stated for the studentized statistic, and hence is directly usable: this
requires controlling the empirical sandwich matrix, whose summands are of degree
four in the observations at the same accuracy.

\item[(iii)] \emph{Inference with explicit error.} From the projected bound we
obtain one- and two-sided confidence intervals for each coordinate with coverage
$1-\alpha+O(\log n/\sqrt n)$, and a Bonferroni omnibus test of a linear
hypothesis whose level exceeds the nominal one by at most $O(\log n/\sqrt n)$
(Corollary~\ref{cor:bonferroni-test}). Both statements are non-asymptotic in the
error term; neither is available from a central limit theorem alone.
\end{enumerate}

Beyond the coverage statements above, a Berry--Esseen bound is the first term of
the Edgeworth-type expansions on which the accuracy of resampling schemes rests.
In the independent case, \citet{barbe1995weighted} use such expansions to select
the distribution of the random weights in the weighted bootstrap, and the
perturbation bootstrap, which inserts random weights directly into the objective
function, has been shown to be second-order correct; \citet{das2025pebble}
establish this for the maximum likelihood estimator in logistic regression, where
the lattice structure of the binary response yields an error of order
$O(\log n/\sqrt n)$ that the perturbation bootstrap reduces to $o_p(n^{-1/2})$.
The bound obtained here is the corresponding leading term for the class
\eqref{eq:intro-model} under weak dependence, and is thus the natural starting
point for a second-order theory of resampling in these models. We do not pursue
that theory in this paper.

The rest of the paper is organized as follows. Section~2 introduces the model and
the quasi-likelihood estimating equation, and discusses the linear, count, binary
and bounded specifications that it covers. Section~3 contains the large-sample
theory: the localization result, the Berry--Esseen bound for the studentized
estimator, and the resulting confidence intervals and Bonferroni test.
Section~4 reports two Monte Carlo experiments, one on the localization radius and
one on the accuracy of the Gaussian approximation over a grid of projection
directions. Section~5 presents the application to realized correlations, and
Section~6 concludes. Proofs are collected in the supplementary material.
\section{Models and estimators}
  In this section, we introduce the proposed model and discuss several important special cases.

We recall that the  process $(Y_t, X_t)_{t\in\Z}$ satisfies the non-linear time
series model
\begin{equation}
  \label{eq::nonlinear}
  \E\!\left[Y_t \mid \mathcal{F}_{t-1}\right]
  = g\!\left(\omega + \sum_{i=1}^{p}\alpha_i\,Y_{t-i}
            + \gamma^{\top} X_{t-1}\right),
\end{equation}
where $\mathcal{F}_{t} = \sigma\!\left(Y_s, X_s : s \le t\right)$ is the natural
filtration and $g$ is a known link function. Let
$\theta = (\omega, \alpha_1, \ldots, \alpha_p, \gamma^{\top})^{\top}$ denote the
model parameter, and let $\theta_0$ be the true parameter value under which the
observations $(Y_t, X_t)_{t=-p+1,\ldots,n}$ are generated. For convenience we
write
\(
  \eta_t(\theta) = \omega + \sum_{i=1}^{p}\alpha_i\,Y_{t-i}
                 + \gamma^{\top} X_{t-1},
\)
so that \eqref{eq::nonlinear} reads
$\E[Y_t\mid\mathcal F_{t-1}] = g(\eta_t(\theta_0))$.

We estimate $\theta_0$ by the quasi-likelihood estimator $\widehat{\theta}_n$,
defined as a solution of the estimating equations
\begin{equation}
  \label{eq:qmle}
  \frac{1}{n}\sum_{t=1}^{n}
  \Big(Y_t - g\big(\eta_t(\widehat{\theta}_n)\big)\Big)\,Z_{t-1} = 0,
  \qquad
  Z_{t-1} = \big(1, Y_{t-1}, \ldots, Y_{t-p}, X_{t-1}^{\top}\big)^{\top}.
\end{equation}

 Some examples are given as follows.
\subsection{Linear models and the least squares estimator}

The linear models are those for which the link $g$ in \eqref{eq::nonlinear} is
the identity. The conditional mean is then an affine function of the regressor
vector $Z_{t-1}=(1,Y_{t-1},\dots,Y_{t-p},X_{t-1}^{\top})^{\top}$:
\[
  \eta_t(\theta)=\theta^{\top}Z_{t-1},
  \qquad
  \E[Y_t\mid\mathcal F_{t-1}]=\theta_0^{\top}Z_{t-1}.
\]
Three standard models belong to this class. In each case we write the
data-generating equation and verify that its conditional mean has the form
\eqref{eq::nonlinear} with $g=\mathrm{id}$.

\emph{(a) Autoregressive model $\mathrm{AR}(p)$}. The
process $(Y_t)$ is real-valued and satisfies
\begin{equation}
\label{eq:ar}
  Y_t=\omega+\sum_{i=1}^{p}\alpha_i\,Y_{t-i}+\gamma^{\top}X_{t-1}+\varepsilon_t,
  \qquad \E[\varepsilon_t\mid\mathcal F_{t-1}]=0,
\end{equation}
where the innovation $(\varepsilon_t)$ is a martingale-difference sequence.
Taking conditional expectations gives
$\E[Y_t\mid\mathcal F_{t-1}]=\eta_t(\theta_0)$.

\emph{(b) Poisson model $\mathrm{INARCH}(p)$}. The process $(Y_t)$ is $\Z_+$-valued and, given the
past, is Poisson distributed with a linear intensity:
\begin{equation}
\label{eq:inarch}
  Y_t\mid\mathcal F_{t-1}\sim\mathrm{Poisson}(\lambda_t),
  \qquad
  \lambda_t=\omega+\sum_{i=1}^{p}\alpha_i\,Y_{t-i}+\gamma^{\top}X_{t-1},
\end{equation}
with $\omega>0$, $\alpha_i\ge 0$ and $X_{t-1}$ such that $\lambda_t>0$ almost
surely. Then $\E[Y_t\mid\mathcal F_{t-1}]=\lambda_t=\eta_t(\theta_0)$.

\emph{(c) Model $\mathrm{ARCH}(p)$}. Here the linear
structure appears after squaring. Let the underlying process $(Y_t^{*})$ satisfy
\begin{equation}
\label{eq:arch}
  Y_t^{*}=\sigma_t\,\varepsilon_t,
  \qquad
  \sigma_t^{2}=\omega+\sum_{i=1}^{p}\alpha_i\,(Y_{t-i}^{*})^{2}
              +\gamma^{\top}X_{t-1},
\end{equation}
where $(\varepsilon_t)$ is i.i.d., independent of $\mathcal F_{t-1}$, with
$\E\varepsilon_t=0$ and $\E\varepsilon_t^{2}=1$, and $\omega>0$,
$\alpha_i\ge 0$. Define the observed process as the squared returns,
\(
  Y_t:=(Y_t^{*})^{2}.
\)
Since $Y_{t-i}=(Y_{t-i}^{*})^{2}$ and
$\E[\varepsilon_t^{2}\mid\mathcal F_{t-1}]=1$,
\[
  \E[Y_t\mid\mathcal F_{t-1}]=\sigma_t^{2}
  =\omega+\sum_{i=1}^{p}\alpha_i\,Y_{t-i}+\gamma^{\top}X_{t-1}
  =\eta_t(\theta_0),
\]
so $(Y_t)$ obeys \eqref{eq::nonlinear} with $g=\mathrm{id}$.  

In all three models the error $e_t:=Y_t-\eta_t(\theta_0)$ is a
martingale difference, $\E[e_t\mid\mathcal F_{t-1}]=0$. They differ in the conditional law of $e_t$: additive noise $\varepsilon_t$ for
$\mathrm{AR}(p)$, the centred count $Y_t-\lambda_t$  for $\mathrm{INARCH}(p)$, and the multiplicative form
$\sigma_t^{2}(\varepsilon_t^{2}-1)$ for the squared $\mathrm{ARCH}(p)$; in particular the conditional variance is constant for $\mathrm{AR}(p)$ but state-dependent for the other two.

The estimating equations \eqref{eq:qmle} reduce to the linear
normal equations
\[
  \frac1n\sum_{t=1}^{n}\big(Y_t-\widehat\theta_n^{\top}Z_{t-1}\big)Z_{t-1}=0
  \iff
  \Big(\frac1n\sum_{t=1}^{n}Z_{t-1}Z_{t-1}^{\top}\Big)\widehat \theta_n
  =\frac1n\sum_{t=1}^{n}Y_t\,Z_{t-1}.
\]
Whenever the     matrix
$\frac1n\sum_{t=1}^{n}Z_{t-1}Z_{t-1}^{\top}$ is invertible, this has the
closed-form solution
\begin{equation}
\label{eq:lse}
  \widehat\theta_n
  =\Big(\frac1n\sum_{t=1}^{n}Z_{t-1}Z_{t-1}^{\top}\Big)^{-1}
   \Big(\frac1n\sum_{t=1}^{n}Y_t\,Z_{t-1}\Big),
\end{equation}
the (conditional) least squares estimator, i.e.\ the minimizer of
$\theta\mapsto\frac1{2n}\sum_{t=1}^{n}(Y_t-\theta^{\top}Z_{t-1})^{2}$.
\subsection{Non-linear models for count autoregressions}

Linear count models are subject to two structural limitations. To keep the
conditional mean $\lambda_t=\eta_t(\theta_0)$ positive, the parameters must be
sign-constrained ($\omega>0$, $\alpha_i\ge 0$, and $\gamma$ restricted so that
$\lambda_t>0$ almost surely); as a consequence, the model can only reproduce
non-negative autocorrelations. Non-linear links have been introduced to lift
both restrictions.

\medskip
\emph{(a) Log-linear model}. This model is introduced by \citet{fokianos2012nonlinear}. Writing
$\nu_t=\log\lambda_t$, the log-linear Poisson autoregression specifies
\begin{equation}
\label{eq:loglinear}
  Y_t\mid\mathcal F_{t-1}\sim\mathrm{Poisson}(\lambda_t),
  \qquad
  \nu_t=\log\lambda_t
       =\omega +\sum_{i=1}^{p}a_i\,\log(1+Y_{t-i}) 
        +\gamma^{\top}X_{t-1}.
\end{equation}
The logarithmic link guarantees $\lambda_t>0$ without sign constraints and
allows negative dependence. However,  the
parameters act on the log-scale and on log-transformed lags, which makes them
harder to interpret, and the attainable range of autocorrelations is
restricted.

\emph{(b) Softplus Poisson model}. \citet{weibSoftplus} proposed the Softplus link function defined  with scale $c>0$ by
\(
  s_c(x)=c\,\log\!\big(1+e^{x/c}\big),\, x\in\R .
\)
It is smooth, strictly increasing, and strictly positive, with derivative equal
to the logistic function,
$s_c'(x)=\big(1+e^{-x/c}\big)^{-1}\in(0,1)$, and second derivative
$s_c''(x)=c^{-1}s_c'(x)\big(1-s_c'(x)\big)$.   The softplus
Poisson autoregression is
\begin{equation}
\label{eq:softplus-poisson}
  Y_t\mid\mathcal F_{t-1}\sim\mathrm{Poisson}(\lambda_t),
  \qquad
  \lambda_t=s_c\!\Big(\omega+\sum_{i=1}^{p}\alpha_i\,Y_{t-i}
                     +\gamma^{\top}X_{t-1}\Big).
\end{equation}
Because $s_c>0$, no sign constraint is imposed on
$(\omega,\alpha_1,\dots,\alpha_p,\gamma)$, so the model accommodates negative autocorrelation. Since
$s_c(x)\approx x$ once $\lambda_t$ is not too small, the model is then close to
the linear INARCH$(p)$, and the parameters retain the interpretation they have
in the linear case. Model
\eqref{eq:softplus-poisson} is exactly \eqref{eq::nonlinear} with the known
link $g=s_c$ 
and the estimating equations \eqref{eq:qmle} apply verbatim with $g=s_c$.  

\medskip
\emph{(c) Extension to the Negative-binomial softplus model}. The Poisson
specification imposes equidispersion, $\mathrm{Var}(Y_t\mid\mathcal
F_{t-1})=\lambda_t$, which is often violated. Retaining the softplus link but
replacing the conditional law by a negative binomial one accommodates
overdispersion while remaining within \eqref{eq::nonlinear}. Let $r>0$ be a
fixed dispersion parameter and, given the past, let $Y_t$ follow a negative
binomial law $\mathrm{NB}(r,p_t)$ counting failures before the $r$-th success,
\[
  \P\big(Y_t=y\mid\mathcal F_{t-1}\big)
  =\binom{y+r-1}{y}\,p_t^{\,r}(1-p_t)^{y},\qquad y=0,1,2,\dots,
\]
whose conditional mean and variance are
$\mu_t=r(1-p_t)/p_t$ and $r(1-p_t)/p_t^{2}=\mu_t+\mu_t^{2}/r$. We model the mean
through the softplus link,
\begin{equation}
\label{eq:softplus-nb}
  \mu_t=\E[Y_t\mid\mathcal F_{t-1}]
       =s_c\!\Big(\omega+\sum_{i=1}^{p}\alpha_i\,Y_{t-i}
                 +\gamma^{\top}X_{t-1}\Big)
       =s_c\big(\eta_t(\theta_0)\big).
\end{equation}
Thus \eqref{eq:softplus-nb} is again \eqref{eq::nonlinear} with the same link
$g=s_c$ and the same regressor $\eta_t(\theta)=\theta^{\top}Z_{t-1}$; the
conditional mean, and therefore the estimating equations \eqref{eq:qmle}, are
identical to the Poisson case \eqref{eq:softplus-poisson}. The two models differ
only through the conditional variance,
\[
  \mathrm{Var}(Y_t\mid\mathcal F_{t-1})
  =s_c\big(\eta_t(\theta_0)\big)+\frac{s_c\big(\eta_t(\theta_0)\big)^{2}}{r}
  \;>\;\mu_t ,
\]
which is overdispersed and recovers the Poisson variance as $r\to\infty$.
Consequently the estimator $\widehat\theta_n$ of \eqref{eq:qmle} is valid for
both models: being a first-moment (quasi-likelihood) estimating equation, it
depends on the conditional law only through $\mu_t=s_c(\eta_t)$, and the change
from Poisson to negative binomial affects the efficiency of $\widehat\theta_n$
but neither its definition.

\subsection{Autoregressions for binary data}

For binary responses $Y_t\in\{0,1\}$, the conditional mean is the conditional
success probability, $\E[Y_t\mid\mathcal F_{t-1}]=\P(Y_t=1\mid\mathcal
F_{t-1})=\pi_t$. A unified framework is obtained by taking the link $g$ in
\eqref{eq::nonlinear} to be a known cumulative distribution function
$F:\R\to(0,1)$ (the response function), so that
\begin{equation}
\label{eq:binary}
  Y_t\mid\mathcal F_{t-1}\sim\mathrm{Bernoulli}(\pi_t),
  \qquad
  \pi_t=F\!\Big(\omega+\sum_{i=1}^{p}\alpha_i\,Y_{t-i}
               +\gamma^{\top}X_{t-1}\Big)=F\big(\eta_t(\theta_0)\big).
\end{equation}
This is the dynamic binary-response model with a general link of
\citet{kauppi}; its observation-driven form with feedback is
studied in \citet{moysiadis2014binary}, and the mixing and asymptotic theory of the maximum likelihood estimator
for the case in which lagged responses enter as regressors is established in \citet{deJong2011}.  
The estimating equations \eqref{eq:qmle} apply with $g=F$ and  it does not coincide with the Bernoulli maximum
likelihood estimator, except the case of the logistic model.
 
\subsection{Autoregressions for bounded \texorpdfstring{$(0,1)$}{(0,1)} data}
For bounded responses $Y_t\in(0,1)$, such as rates and proportions, the
conditional mean $\mu_t=\E[Y_t\mid\mathcal F_{t-1}]\in(0,1)$ is again modelled
through a known cumulative distribution function $F:\R\to(0,1)$, and the
conditional law is taken to be a Beta distribution. In the mean--precision
parametrization,
\begin{equation}
\label{eq:beta}
  Y_t\mid\mathcal F_{t-1}\sim
  \mathrm{Beta}\big(\mu_t\phi,\,(1-\mu_t)\phi\big),
  \qquad
  \mu_t=F\!\Big(\omega+\sum_{i=1}^{p}\alpha_i\,Y_{t-i}
               +\gamma^{\top}X_{t-1}\Big)=F\big(\eta_t(\theta_0)\big),
\end{equation}
where $\phi>0$ is a precision parameter; the two shape parameters
$\mu_t\phi$ and $(1-\mu_t)\phi$ give conditional mean $\mu_t$ and conditional
variance $\mu_t(1-\mu_t)/(1+\phi)$. As in the binary case, $F$ is a strictly
increasing, continuously differentiable distribution function, the logit
 and the probit   being the standard choices. This is the beta autoregressive model  built on the beta regression of
\citet{ferrari2004beta}. The estimating equations
\eqref{eq:qmle} apply with $g=F$ and do not coincide with the Beta maximum
likelihood estimator.
\section{Large sample properties}
In this section, we establish a concentration result (Theorem~\ref{th:concentration}), which is then used to derive a Berry--Esseen-type bound (Theorem~\ref{th:BEprojection}). We also propose a Bonferroni-type testing procedure for hypotheses on the model parameters (Corollary~\ref{cor:bonferroni-test}).

Starting from \eqref{eq:qmle} and a first-order Taylor expansion of
$g(\eta_t(\cdot))$ around $\theta_0$, we obtain
\begin{equation}
  \label{eq:expansion}
  0 = \frac{1}{n}\sum_{t=1}^{n}\Big(Y_t - g\big(\eta_t(\widehat\theta_n)\big)\Big)Z_{t-1}
    = S_n - A_n\,\widehat h_n - R_n(\widehat h_n),
\end{equation}
where $\widehat h_n = \widehat\theta_n - \theta_0$ and
\[
  S_n = \frac{1}{n}\sum_{t=1}^{n}\Big(Y_t - g\big(\eta_t(\theta_0)\big)\Big)Z_{t-1},
  \qquad
  A_n = \frac{1}{n}\sum_{t=1}^{n} g'\big(\eta_t(\theta_0)\big)\,Z_{t-1}Z_{t-1}^{\top},
\]
and the second-order remainder is
\[
  R_n(h) = \frac{1}{n}\sum_{t=1}^{n}
    \left(\int_0^1 \Big[g'\big(\eta_t(\theta_0 + s\,h)\big)
                       - g'\big(\eta_t(\theta_0)\big)\Big]\,\mathrm{d}s\right)
    Z_{t-1}Z_{t-1}^{\top}\,h .
\]
Throughout, $d=1+p+\dim(X_{t-1})$ denotes the dimension of $\theta$, so that
$\theta,Z_{t-1}\in\R^{d}$.

For the linear models, $g$ is the identity, hence $g'\equiv 1$ is constant and
the integrand in $R_n$ vanishes, so $R_n\equiv 0$ and
$A_n=\frac1n\sum_{t=1}^{n}Z_{t-1}Z_{t-1}^{\top}$; the expansion reduces to
$\widehat h_n=A_n^{-1}S_n$. In general, on the event where $A_n$ is invertible,
equation \eqref{eq:expansion} is equivalent to the fixed-point equation
$\widehat h_n=\Psi_n(\widehat h_n)$, where
\(
  \Psi_n(h)=A_n^{-1}\big\{S_n-R_n(h)\big\}.
\)

Concentration of the estimator is obtained by exhibiting a high-probability
event $\mathcal E_n$, depending on a radius sequence $r_n$, on which $\Psi_n$ is
well defined and maps the ball $\overline B(0,r_n)=\{h\in\R^{d}:\|h\|\le r_n\}$
into itself, that is, $\sup_{\|h\|\le r_n}\|\Psi_n(h)\|\le r_n$. On
$\mathcal E_n$, Brouwer's fixed-point theorem then yields a solution
$\widehat h_n\in\overline B(0,r_n)$, giving the concentration
$\|\widehat\theta_n-\theta_0\|\le r_n$. It therefore suffices to bound $S_n$,
$A_n$ and $\sup_{\|h\|\le r_n}\|R_n(h)\|$ on events of high probability, after
which the result follows from a union bound. To construct these events we use
the Fuk--Nagaev inequality for dependent sequences with summable functional
dependence coefficients, established by \citet{liu2013probability} under the
functional dependence measure of \citet{Wu}.

We recall the functional dependence measure of \citet{Wu} for causal time
series. Assume that a time series $(U_t)_{t\in\Z}$ admits a causal
(Bernoulli-shift) representation
$U_t=f(\varepsilon_t,\varepsilon_{t-1},\ldots)$, $t\in\Z$, for a measurable
function $f$ and an \emph{i.i.d.} sequence $(\varepsilon_t)_{t\in\Z}$. Let
$\widetilde\varepsilon_0$ be an independent copy of $\varepsilon_0$, and define
the coupled sequence $(\widetilde\varepsilon_t)_{t\in\Z}$ by
$\widetilde\varepsilon_t=\varepsilon_t$ for $t\neq0$ and $\widetilde\varepsilon_0$
in place of $\varepsilon_0$, together with
$\widetilde U_t=f(\widetilde\varepsilon_t,\widetilde\varepsilon_{t-1},\ldots)$.
For any order $q\ge1$ and $t\ge0$, the functional dependence coefficients of
$(U_t)_{t\in\Z}$ are
\[
  \theta_{q,t}=\big\|U_t-\widetilde U_t\big\|_q = \E^{1/q} \big\|U_t-\widetilde U_t\big\|^q,
  \qquad
  \Theta_{m,q}=\sum_{i=m}^{\infty}\theta_{q,i},
\]
and $(U_t)_{t\in\Z}$ is $q$-stable if $\Theta_{0,q}<\infty$. For non-linear time
series with exogenous covariates defined through stochastic recursive equations,
\citet{debaly_truquet_2021} give sufficient conditions under which a $q$-stable
solution exists when the covariates are themselves $q$-stable (Section~3
therein), while \citet{liu2013probability} establish Fuk--Nagaev concentration
bounds for $q$-stable processes.

Since $S_n$, $A_n$ and $R_n(h)$ are empirical means of functions of the sequence
$(X_t,Y_t)_{t\in\Z}$, we impose a stability assumption on $(X_t,Y_t)_{t\in\Z}$
and regularity assumptions on $g$ and $g'$ ensuring that these empirical means
are averages of $q$-stable sequences.

\begin{assumption}[Stationarity and dependence]
\label{ass:stationaryDependence}
The process $(Y_t,X_t)_{t\in\Z}$ is stationary and ergodic. There exist
$q\ge 3$, $\delta\in(0,2q]$, $\rho\in(0,1)$ and $C<\infty$ such that
\[
  \Theta_{m,q+\delta}(Y)\le C\rho^{m},
  \qquad
  \Theta_{m,q+\delta}(X)\le C\rho^{m},
  \qquad m\ge 0,
\]
and $\E|Y_0|^{\,2q(q+\delta)/\delta}<\infty$,
$\E\|X_0\|^{\,2q(q+\delta)/\delta}<\infty$.
\end{assumption}

 Establishing concentration for $S_n$, $A_n$ and $R_n(h)$ requires the products
they average to be $q$-stable.   More
precisely, the fluctuation of such a product factors, through H\"older's
inequality with $1/q=1/(q+\delta)+1/r$, into an increment of the underlying
process $(Y_t,X_t)$ measured at order $q+\delta$ and an envelope of degree two
measured in $L^{r}$. Order-$q$ stability of the product is thus recovered by
requiring $(Y_t,X_t)$ to be stable at the strictly higher order $q+\delta$, and
the degree-two envelope is controlled by a finite moment of order
$2r=2q(q+\delta)/\delta$. These are the two conditions of
Assumption~\ref{ass:stationaryDependence}. The margin $\delta>0$ is the excess
stability order that forming products costs; it may be taken arbitrarily small,
at the price of a larger moment $2q(q+\delta)/\delta$, which diverges as
$\delta\downarrow0$. The two conditions are genuinely distinct in the regime $\delta<2q$: there,
geometric decay of $\Theta_{m,q+\delta}$ yields only a finite moment of order
$q+\delta$, strictly smaller than the required order $2q(q+\delta)/\delta$
(indeed $2q(q+\delta)/\delta>q+\delta$ if and only if $\delta<2q$). This is the
relevant regime, since one takes $\delta$ small, the required order
$2q(q+\delta)/\delta$ diverging as $\delta\downarrow0$.
 The lower bound $q\ge3$ enters at the
concentration step: it guarantees that the exceptional event on which the
fixed-point construction of $\widehat h_n$ fails has probability $o(n^{-1/2})$,
which is what the union bound over the controls of $S_n$, $A_n$ and $R_n$
requires.

A broad class of models defined through stochastic recursive equations fits
\eqref{eq::nonlinear} and satisfies Assumption~\ref{ass:stationaryDependence}
under a local Lipschitz condition on the recursion together with geometric decay
and the moment condition on the covariate process; see conditions
\textbf{A3$'$} or \textbf{B4} in \citet{debaly_truquet_2021}, which also give
conditions for stationarity, ergodicity and finite moments. Geometric decay is
easy to verify in practice: for a Lipschitz iterated-random-function recursion
the functional dependence coefficients satisfy $\theta_{q,t}\le 2K_qL_q^{t}$
with $L_q<1$, whence $\Theta_{m,q}=O(L_q^{m})$; see Example~1 in
\citet{liu2013probability} and Proposition~2 in \citet{debaly_truquet_2021}.
\begin{assumption}
\label{ass:hypo_sur_g}
The link $g:\R\to\R$ is of class $\mathcal C^1$, and there exists $C>0$ such
that, for all $u,v\in\R$,
\[
  |g(u)-g(v)|\le C\,|u-v|
  \qquad\text{and}\qquad
  |g'(u)-g'(v)|\le C\,|u-v| .
\]
\end{assumption}

The first condition states that $g$ is Lipschitz (equivalently, $g'$ is
bounded), and the second that $g'$ is Lipschitz; together they control $A_n$ and
the remainder $R_n$ in \eqref{eq:expansion}. For the linear models both hold
trivially: $g=\mathrm{id}$ gives $|g(u)-g(v)|=|u-v|$ and $g'\equiv1$, so the
second inequality holds with any $C\ge0$. For the count autoregressions the link
is the softplus $s_c$, whose derivative is the logistic function
($s_c'\in(0,1)$, $\|s_c''\|_\infty\le 1/(4c)$), and for the binary and bounded
models the link is a distribution function   with bounded
Lipschitz density; in all these cases both conditions of
Assumption~\ref{ass:hypo_sur_g} are satisfied.

\begin{lem}\label{lem:controlScore}
    Under the conditions of Assumption \ref{ass:stationaryDependence} and \ref{ass:hypo_sur_g}, there is $a_S>0,$ such that 
    $$
    \P\Big(\|S_n\|\geq a_S\sqrt{\frac{\log n}{n}}\Big) = o(n^{-1/2}).
    $$
\end{lem}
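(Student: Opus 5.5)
The bound will come from the coordinates of $S_n$. Since $\|S_n\|\le \sqrt d\,\max_{1\le j\le d}|S_{n,j}|$, a union bound over the $d$ coordinates means it suffices to prove, for each $j$,
\[
\P\big(|S_{n,j}|\ge a\sqrt{\log n/n}\big)=o(n^{-1/2}),
\]
and then take $a_S=\sqrt d\,a$. Write $S_{n,j}=n^{-1}\sum_{t=1}^n U_{t,j}$ with $U_{t,j}=e_t Z_{t-1,j}$, where $e_t=Y_t-g(\eta_t(\theta_0))$. Because $e_t$ is a martingale difference and $Z_{t-1}$ is $\mathcal F_{t-1}$-measurable, we have $\E U_{t,j}=0$ (indeed $(U_{t,j})$ is a martingale difference sequence). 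The process is stationary and, via the Bernoulli-shift representation of $(Y_t,X_t)$ implicit in Assumption~\ref{ass:stationaryDependence}, it is a causal function of the same innovations.

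\textbf{Step 1: $q$-stability of $U_{t,j}$.} We have $|e_t-\wt e_t|\le |Y_t-\wt Y_t|+C\|\theta_0\|\,\|Z_{t-1}-\wt Z_{t-1}\|$ by Assumption~\ref{ass:hypo_sur_g}. We also have $|e_t|\le |Y_t|+|g(0)|+C\|\theta_0\|\|Z_{t-1}\|$. Using
\[
U_t-\wt U_t=(e_t-\wt e_t)Z_{t-1}+\wt e_t\,(Z_{t-1}-\wt Z_{t-1}),
\]
Hölder's inequality with $1/q=1/(q+\delta)+1/r$, $r=q(q+\delta)/\delta$, gives $\theta_{q,t}(U)\lesssim \theta_{q+\delta,t}+\theta_{q+\delta,t-1}+\cdots+\theta_{q+\delta,t-p}$ of $(Y,X)$. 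The factor multiplying these is an envelope of degree at most one in $L^r$, which is finite by the moment condition $\E|Y_0|^{2r}<\infty$ (more than enough here). Hence $\Theta_{m,q}(U)\le C'\rho^{m}$ for $m\ge0$, with a constant absorbing the shift by $p$ lags. Also $\|U_0\|_q<\infty$.

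\textbf{Step 2: Fuk--Nagaev.} We apply the inequality of \citet{liu2013probability} for $q$-stable sequences with geometric (hence polynomially fast) decay. It gives, for $x>0$,
\[
\P\Big(\Big|\sum_{t=1}^n U_{t,j}\Big|\ge x\Big)\le c_1\frac{n}{x^{q}}+c_2\exp\Big(-\frac{c_3x^2}{n\|U_0\|_2^2}\Big).
\]
Here the constants depend on $q$ and on $\sum_m m^{q/2-1}\Theta_{m,q}$, which is finite under geometric decay. Taking $x=a\sqrt{n\log n}$, the polynomial term is $c_1 n^{1-q/2}a^{-q}(\log n)^{-q/2}$. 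Since $q\ge3$, this is $O(n^{-1/2}(\log n)^{-q/2})=o(n^{-1/2})$. The Gaussian term is $c_2 n^{-c_3a^2/\|U_0\|_2^2}$, which is $o(n^{-1/2})$ once $a^2>\|U_0\|_2^2/(2c_3)$. Fixing such an $a$ for all $j$ finishes the argument.

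\textbf{Main obstacle.} The delicate point is Step 2: using the precise form of the Liu--Xiao--Wu Fuk--Nagaev bound. Their statement involves dependence-adjusted norms $\|U_{\cdot}\|_{q,\alpha}=\sup_m (m+1)^\alpha\Theta_{m,q}$, and one must check that with geometric decay we are in the regime (any $\alpha>1/2-1/q$) where the polynomial term is exactly of order $n/x^q$, with no additional power of $n$. This is what makes $q\ge3$ sufficient for $o(n^{-1/2})$. Step 1 is routine, but care is needed that the exponent $r$ matches the moment assumption; it does, with room to spare, since only degree-one envelopes appear for $S_n$.
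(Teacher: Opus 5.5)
Your proposal is correct and follows essentially the same route as the paper: a coordinatewise union bound, a H\"older transfer of the functional dependence coefficients with $s=q+\delta$ and $r=q(q+\delta)/\delta$, and the Liu--Xiao--Wu Fuk--Nagaev inequality in the geometric-decay regime~(ii) at threshold $a\sqrt{n\log n}$, where $q\ge3$ gives $n^{1-q/2}(\log n)^{-q/2}=o(n^{-1/2})$. The only inaccuracy is the variance proxy in your Gaussian term: in case~(ii) that term is $4G_{1-2/q}\big(C_2x/(\sqrt n\,\Theta_{0,q})\big)\le 4e\,G_{1-2/q}(1)\exp\big(-C_2^2x^2/(n\Theta_{0,q}^2)\big)$ once the argument is at least $1$, so $\Theta_{0,q}^2$ appears in place of $\|U_0\|_2^2$, and this changes only the admissible lower bound on $a$.
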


We now turn to the Hessian term $A_n$. Under
Assumption~\ref{ass:stationaryDependence} the process $(Y_t,X_t)$ is stationary
and ergodic with a finite moment of order $2q(q+\delta)/\delta$;   the ergodic theorem gives
\(
  A_n\xrightarrow[n\to\infty]{\text{a.s.}}
  A_0:=\E\big[g'(\eta_1(\theta_0))\,Z_0Z_0^{\top}\big],
  \, \lambda_0:=\lambda_{\min}(A_0).
\)
Assumption~\ref{ass:pdHessian} below, together with the strict positivity of
$g'$, ensures $\lambda_0>0$. Applying the Fuk--Nagaev inequality for
functionally dependent sequences to the entries of $A_n$, we show that
$\lambda_{\min}(A_n)$ remains bounded away from $0$ with high probability.

\begin{assumption}
\label{ass:pdHessian}
For every $u\in\R^{d}$, $\ \P(u^{\top}Z_0=0)=1\ \Rightarrow\ u=0$. In addition, assume $g'>0$ .
\end{assumption}

\begin{lem}
\label{lem:pbHessianBound}
Under Assumptions~\ref{ass:stationaryDependence}, \ref{ass:hypo_sur_g} and
\ref{ass:pdHessian},  
\[
  \P\Big(\lambda_{\min}(A_n)>\lambda_0/2\Big)=1-o(n^{-1/2}).
\]
\end{lem}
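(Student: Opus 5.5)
The plan is to reduce the eigenvalue statement to an entrywise deviation bound and then apply the Fuk--Nagaev inequality of \citet{liu2013probability} entry by entry.

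\textbf{Step 1: $\lambda_0>0$.} We first check that $\lambda_0>0$. For $u\neq0$,
\[
u^\top A_0u=\E\big[g'(\eta_1(\theta_0))(u^\top Z_0)^2\big].
\]
Here $g'>0$, and $u^\top Z_0\neq0$ with positive probability by Assumption~\ref{ass:pdHessian}. Hence $u^\top A_0u>0$. Compactness of the unit sphere, together with continuity of $u\mapsto u^\top A_0u$, then gives $\lambda_0>0$.

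\textbf{Step 2: reduction to entrywise deviations.} By Weyl's inequality,
\[
\lambda_{\min}(A_n)\ge\lambda_0-\|A_n-A_0\|_{\op}\ge \lambda_0-\|A_n-A_0\|_F\ge\lambda_0-d\max_{j,k}|(A_n-A_0)_{jk}|.
\]
A union bound over the $d^2$ entries therefore gives
\[
\P\big(\lambda_{\min}(A_n)\le\lambda_0/2\big)\le\sum_{j,k}\P\Big(\Big|\frac1n\sum_{t=1}^n (W^{jk}_t-\E W^{jk}_0)\Big|\ge \frac{\lambda_0}{2d}\Big),
\]
with $W^{jk}_t=g'(\eta_{t+1}(\theta_0))Z_{t,j}Z_{t,k}$. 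The event concerns a fixed-size deviation $\lambda_0/(2d)$, not one shrinking with $n$, so even a crude inequality will be enough.

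\textbf{Step 3: $q$-stability of the summands.} We show that each $(W^{jk}_t)$ is $q$-stable with geometrically decaying $\Theta_{m,q}$. Write $G_t=g'(\eta_{t+1}(\theta_0))$. Then
\[
W_t-\widetilde W_t=(G_t-\widetilde G_t)Z_{t,j}Z_{t,k}+\widetilde G_t\big(Z_{t,j}Z_{t,k}-\widetilde Z_{t,j}\widetilde Z_{t,k}\big).
\]
For the first term, Assumption~\ref{ass:hypo_sur_g} gives $|G_t-\widetilde G_t|\le C|\eta_{t+1}-\widetilde\eta_{t+1}|$, which is bounded by a linear combination of $|Y_{t-i}-\widetilde Y_{t-i}|$ and $\|X_t-\widetilde X_t\|$. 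For the second term, $g'$ is bounded, and the difference of products splits into an increment times a single coordinate.

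In both terms, Hölder's inequality with $1/q=1/(q+\delta)+1/r$ controls the increment in $L^{q+\delta}$ and the envelope (of degree at most two) in $L^r$. This envelope is finite by the moment assumption $2r=2q(q+\delta)/\delta$. Summing over lags yields $\Theta_{m,q}(W^{jk})\le C'\rho^{m-p}$, and also $\E|W^{jk}_0|^q<\infty$.

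\textbf{Step 4: Fuk--Nagaev.} We apply the inequality of \citet{liu2013probability} to $\sum_t(W_t-\E W_0)$ at level $x=n\lambda_0/(2d)$. Under geometric decay it gives a bound of the form
\[
C_1\frac{n\,\|W_0\|_q^q}{x^q}+C_2\exp\!\Big(-\frac{c\,x^2}{n}\Big)=O(n^{1-q})+O(e^{-cn}).
\]
Since $q\ge3$, this is $O(n^{-2})=o(n^{-1/2})$, and summing over the $d^2$ entries finishes the proof.

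I expect Step~3 to be the main obstacle. The delicate point is the functional dependence bound for the product including the $g'$ factor: one must ensure the Hölder exponents match the moment condition. One must also check that the variance proxy used in the Fuk--Nagaev inequality is finite, which requires $\Theta_{0,2}<\infty$; this follows from $q$-stability.
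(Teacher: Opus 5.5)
Your proof is correct and follows the paper's argument step for step. It uses the same ingredients: positivity of $\lambda_0$ from $g'>0$ and Assumption~\ref{ass:pdHessian}; Weyl plus the Frobenius bound to reduce to the $d^2$ entries at the fixed level $\lambda_0/(2d)$; $q$-stability of each entry via the same two-term decomposition and the H\"older split $1/q=1/(q+\delta)+1/r$; and Fuk--Nagaev with a union bound. The only harmless difference is in the last step: you apply the inequality directly at the fixed deviation $x=n\lambda_0/(2d)$, which gives the sharper bound $O(n^{1-q})+O(e^{-cn})$, whereas the paper passes through Lemma~\ref{cor:nagaev-rate} at the shrinking threshold $a\sqrt{\log n/n}$ and uses monotonicity to obtain $O\big(n^{-(q-2)/2}(\log n)^{-q/2}\big)$.
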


For the remainder term, the Lipschitz property of $g'$ yields a deterministic
bound. Writing
\(
  \bar R_n:=\frac1n\sum_{t=1}^{n}
    \Big(1+\sum_{i=1}^{p}|Y_{t-i}|+\|X_{t-1}\|\Big)\|Z_{t-1}\|^{2},
\)
one can note that  that
\begin{equation}\label{eq:Rn-det}
  \sup_{\|h\|\le r_n}\|R_n(h)\|\;\le\;\frac{C}{2}\,r_n^{2}\,\bar R_n,
\end{equation}
where $C$ is the Lipschitz constant of $g'$ from
Assumption~\ref{ass:hypo_sur_g}. Under Assumption~\ref{ass:stationaryDependence}
the process is stationary and ergodic with a finite moment of order
$2q(q+\delta)/\delta$, so
$\big(1+\sum_{i=1}^{p}|Y_{-i}|+\|X_{-1}\|\big)\|Z_{-1}\|^{2}$ is integrable and
the ergodic theorem gives
\(
  \bar R_n\xrightarrow[n\to\infty]{\text{a.s.}}
  \E\!\Big[\Big(1+\sum_{i=1}^{p}|Y_{-i}|+\|X_{-1}\|\Big)\|Z_{-1}\|^{2}\Big];
\)
the Fuk--Nagaev inequality then controls its fluctuation. Since the factor
$r_n^{2}$ is common to both sides of \eqref{eq:Rn-det}, the event to be
controlled does not depend on $r_n$. The result reads as follows.
\begin{lem}
\label{lem:controlRn}
Under Assumptions~\ref{ass:stationaryDependence} and \ref{ass:hypo_sur_g}, there exists $a_R>0$ such that
\[
  \P\Big(\sup_{\|h\|\le r_n}\|R_n(h)\|\ge a_R\,r_n^{2}\Big)=o(n^{-1/2}).
\]
\end{lem}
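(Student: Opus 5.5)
The plan is to reduce Lemma~\ref{lem:controlRn} to a deviation bound for the scalar average $\bar R_n$, using the deterministic inequality \eqref{eq:Rn-det}. Set $W_t:=\big(1+\sum_{i=1}^{p}|Y_{t-i}|+\|X_{t-1}\|\big)\|Z_{t-1}\|^{2}$ and $\mu_R:=\E W_1<\infty$. I will prove \eqref{eq:Rn-det} first. By Assumption~\ref{ass:hypo_sur_g},
\[
\big|g'(\eta_t(\theta_0+sh))-g'(\eta_t(\theta_0))\big|\le C s|h^{\top}Z_{t-1}|\le Cs\|h\|\|Z_{t-1}\|.
\]
Integrating over $s$ gives the factor $C\|h\|/2$. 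Since $\|Z_{t-1}\|\le 1+\sum_i|Y_{t-i}|+\|X_{t-1}\|$, we get $\|R_n(h)\|\le \tfrac{C}{2}\|h\|^2\bar R_n$ (up to harmless norm equivalences). Choosing $a_R:=C\mu_R$, the event $\{\sup_{\|h\|\le r_n}\|R_n(h)\|\ge a_R r_n^2\}$ is contained in $\{\bar R_n-\mu_R\ge \mu_R\}$. So it suffices to show $\P(|\bar R_n-\mu_R|\ge \mu_R)=o(n^{-1/2})$, which is a fixed-threshold large deviation.

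Next I will check that $(W_t)$ is $q$-stable with geometrically decaying $\Theta_{m,q}(W)$. I write $W_t=\phi(V_t)$ with $V_t=(Y_{t-1},\dots,Y_{t-p},X_{t-1})$, where $\phi$ is a polynomial of degree three in $|V_t|$ with a locally Lipschitz increment: $|\phi(v)-\phi(\tilde v)|\le K\|v-\tilde v\|(1+\|v\|+\|\tilde v\|)^2$. By Hölder's inequality with $1/q=1/(q+\delta)+1/r$, $r=q(q+\delta)/\delta$,
\[
\|W_t-\widetilde W_t\|_q\le K\|V_t-\widetilde V_t\|_{q+\delta}\,\big\|(1+\|V_t\|+\|\widetilde V_t\|)^2\big\|_r .
\]
The last factor needs a moment of order $2r=2q(q+\delta)/\delta$, which Assumption~\ref{ass:stationaryDependence} provides. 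The first factor is bounded by finitely many lagged dependence coefficients of $Y$ and $X$ at order $q+\delta$. Summing gives $\Theta_{m,q}(W)\le C'\rho^{m}$. The same envelope argument shows $\|W_0\|_q<\infty$.

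The last step applies the Fuk--Nagaev inequality of \citet{liu2013probability} to $\sum_t(W_t-\mu_R)$ at level $x=n\mu_R$. With geometric decay (so the dependence-adjusted norms are finite), it gives
\[
\P\big(|\bar R_n-\mu_R|\ge \mu_R\big)\le c_1\frac{n}{(n\mu_R)^{q}}+c_2\exp\!\big(-c_3 n\mu_R^2\big)=O(n^{1-q})+o(n^{-1/2}).
\]
Since $q\ge3$, $n^{1-q}=o(n^{-1/2})$, which concludes the proof. I expect the main obstacle to be the middle step. The summand is cubic, not quadratic, so the Hölder split must be checked carefully: the envelope squared needs exactly the $L^{2r}$ moment, and the $p$ lags and the covariate must be transferred to the coupled sequence correctly. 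If the cubic envelope turned out to need more than $2r$ moments, I would fall back on truncation: apply Fuk--Nagaev to the bounded part and Markov's inequality to the tail. That fallback works because the threshold $\mu_R$ is fixed rather than shrinking.
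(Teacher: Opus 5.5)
Your proposal is correct and follows the paper's proof essentially step for step. The deterministic bound \eqref{eq:Rn-det} reduces the event to a fixed-level deviation of $\bar R_n$ above its mean, and the cubic summand is shown $q$-stable through the H\"older split with $r=q(q+\delta)/\delta$ (Lemma~\ref{lem:fdm-transfer} with $\alpha=2$); Fuk--Nagaev at a fixed threshold then gives $o(n^{-1/2})$ since $q\ge 3$. The only differences are cosmetic: you choose $a_R=C\mu_R$ explicitly where the paper takes any $a_R>\tfrac C2\mu$, and you apply the inequality directly at $x=n\mu_R$ to get $O(n^{1-q})$ where the paper passes through the threshold $a\sqrt{\log n/n}$ by monotonicity; also note that $\|W_0\|_q<\infty$ needs $3q\le 2r$, which holds because $\delta\le 2q$.
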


Lemmas~\ref{lem:controlScore}, \ref{lem:pbHessianBound}, and \ref{lem:controlRn} provide the high-probability events needed to control $S_n, A_n$, and $R_n(h)$, and together yield a concentration result for the estimator.
\begin{theo}
\label{th:concentration}
Consider the model \eqref{eq::nonlinear} and the estimator \eqref{eq:qmle}, and
suppose Assumptions~\ref{ass:stationaryDependence}, \ref{ass:hypo_sur_g} and
\ref{ass:pdHessian} hold. Then there exists $a>0$ such that, with
probability $1-o(n^{-1/2})$, the estimating equation \eqref{eq:qmle} admits a
unique solution $\widehat\theta_n$, and it satisfies
\[
  \|\widehat\theta_n-\theta_0\|\le a\sqrt{\tfrac{\log n}{n}} .
\]
\end{theo}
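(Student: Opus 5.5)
The plan is to combine the three lemmas on a single good event and run the Brouwer fixed-point argument sketched before Assumption~\ref{ass:stationaryDependence}. Uniqueness will come from a separate local injectivity argument for the estimating map.

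\textbf{Step 1: the good event.} Set $r_n=a\sqrt{\log n/n}$ with $a$ to be chosen, and define
\[
\mathcal E_n=\Big\{\|S_n\|<a_S\sqrt{\tfrac{\log n}{n}}\Big\}\cap\Big\{\lambda_{\min}(A_n)>\lambda_0/2\Big\}\cap\Big\{\bar R_n< 2a_R/C\Big\}.
\]
The last event is the $r_n$-free event behind Lemma~\ref{lem:controlRn}; by \eqref{eq:Rn-det} it gives $\sup_{\|h\|\le r}\|R_n(h)\|\le a_R r^2$ simultaneously for every radius $r$. By Lemmas~\ref{lem:controlScore}, \ref{lem:pbHessianBound} and \ref{lem:controlRn} and a union bound, $\P(\mathcal E_n^c)=o(n^{-1/2})$.

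\textbf{Step 2: existence.} On $\mathcal E_n$ the matrix $A_n$ is invertible with $\|A_n^{-1}\|\le 2/\lambda_0$. The map $\Psi_n(h)=A_n^{-1}\{S_n-R_n(h)\}$ is continuous in $h$, since $g'$ is continuous and the integral is continuous by dominated convergence. For $\|h\|\le r_n$,
\[
\|\Psi_n(h)\|\le \tfrac{2}{\lambda_0}\Big(a_S\sqrt{\tfrac{\log n}{n}}+a_R a^2\tfrac{\log n}{n}\Big).
\]
Choose $a=4a_S/\lambda_0$. For $n$ large, so that $a_Ra^2\sqrt{\log n/n}\le a_S$, the right-hand side is at most $r_n$. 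Brouwer's theorem then gives a fixed point $\widehat h_n\in\overline B(0,r_n)$, and by \eqref{eq:expansion} $\widehat\theta_n=\theta_0+\widehat h_n$ solves \eqref{eq:qmle}. The finitely many small $n$ do not affect an $o(n^{-1/2})$ statement.

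\textbf{Step 3: uniqueness (main obstacle).} Brouwer alone yields only existence, so uniqueness needs a contraction or injectivity argument. The Jacobian of $F_n(\theta)=\frac1n\sum_t g(\eta_t(\theta))Z_{t-1}Z$-type map is $J_n(\theta)=\frac1n\sum_t g'(\eta_t(\theta))Z_{t-1}Z_{t-1}^\top$. By Lipschitzness of $g'$,
\[
\|J_n(\theta)-A_n\|\le C\|\theta-\theta_0\|\,\bar R_n\le 2a_R\|\theta-\theta_0\|.
\]
Hence on $\mathcal E_n$, $\lambda_{\min}(J_n(\theta))\ge\lambda_0/4$ on the ball $\|\theta-\theta_0\|\le \rho:=\lambda_0/(8a_R)$, a fixed radius independent of $n$. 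The mean-value form $F_n(\theta_1)-F_n(\theta_2)=\int_0^1 J_n(\theta_2+s(\theta_1-\theta_2))\,ds\,(\theta_1-\theta_2)$ then shows the estimating function is injective on that convex ball. So the root is unique in $\overline B(\theta_0,\rho)\supset\overline B(\theta_0,r_n)$.

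Global uniqueness on all of $\R^d$ is the delicate point. Since $g'>0$, every $J_n(\theta)$ is positive semidefinite, and it is positive definite whenever $\frac1n\sum Z_{t-1}Z_{t-1}^\top$ is invertible, which holds on $\mathcal E_n$ because $A_n$ is. Then $F_n$ is the gradient of the strictly convex function $\theta\mapsto\frac1n\sum_t G(\eta_t(\theta))-\theta^\top\frac1n\sum_tY_tZ_{t-1}$, with $G'=g$. A strictly convex function has at most one critical point, which settles uniqueness on all of $\R^d$ and makes the local argument above an alternative route.
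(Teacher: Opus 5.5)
Your proposal is correct and follows essentially the paper's proof: the same good event built from the three lemmas (you use the $r_n$-free event $\{\bar R_n<2a_R/C\}$ underlying Lemma~\ref{lem:controlRn}), Brouwer's theorem applied to $\Psi_n$ on $\overline B(0,r_n)$, and global uniqueness from strict convexity of $\theta\mapsto\frac1n\sum_t G(\eta_t(\theta))-\theta^\top\frac1n\sum_t Y_tZ_{t-1}$ once $A_n\succ0$ forces the $Z_{t-1}$ to span $\R^d$. Your additional local-injectivity argument on the fixed ball of radius $\lambda_0/(8a_R)$ is valid but redundant given the convexity step.
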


Theorem~\ref{th:concentration} should not be read as a statement about the convergence rate of $\widehat\theta_n$. In fixed dimension the estimator is root-$n$ consistent, a fact that is well established; the radius
$\sqrt{\log n/n}$ is not the estimation rate but the radius of a neighbourhood on
which the fixed-point construction of $\widehat\theta_n$ can be carried out. The
logarithmic factor is the price paid to obtain a clean bound on the probability
of the exceptional event, namely $1-o(n^{-1/2})$: enlarging the radius from $n^{-1/2}$ to $\sqrt{\log n/n}$ is what makes this probability decay faster than $n^{-1/2}$, which is in turn what the projected Berry--Esseen bound of
Theorem~\ref{th:BEprojection} requires.
 
  For regression with continuous, independent data, the factor can be removed under Cramér-type
smoothness conditions on the   errors, through
\citep{bhattacharya1978validity}. For discrete-valued
responses, however, no such conditional density is available, and the lattice
structure of the score obstructs these expansions; the $\log n$ factor then
appears difficult to discard even in the independent case, as observed for logistic regression \citep{das2025pebble}.

From \eqref{eq:expansion}, we have the expansion
\begin{eqnarray}
    \sqrt{n}\widehat L_n^{-1/2} \widehat{A}_n (\widehat \theta_n - \theta_0)  & = & \sqrt{n} L_0^{-1/2} S_n + \sqrt{n} \Big(L_n^{-1/2}-L_0^{-1/2} \Big)S_n +  L_n^{-1/2} R_n(h_n)
    \\
     && + \Big(\widehat L_n^{-1/2}\widehat A_n - L_n^{-1/2}A_n\Big)h_n \nonumber
\end{eqnarray}
where $L_0 = \E \Big[(Y_1 - g(\eta_1(\theta_0)))^2 Z_{0}Z_{0}^\top\Big], \, L_n = L_n(\theta_0) \text{ and } \widehat L_n = L_n(\widehat \theta_n)$ with
$$
\widehat A_n = \frac{1}{n}\sum_{t=1}^n g'(\eta_t(\widehat \theta_n))Z_{t-1} Z_{t-1}^\top, \, L_n(\theta) = \frac{1}{n}\sum_{t=1}^n (Y_t - g(\eta_t(\theta)))^2 Z_{t-1}Z_{t-1}^\top.
$$
The following assumption ensures that the minimum eigenvalue $\lambda_L$ of $L_0$ is bounded away from $0.$
\begin{assumption}
\label{ass:nondegError}
The conditional variance of the response is almost surely positive:
\[
  \E\big[(Y_1-g(\eta_1(\theta_0)))^{2}\mid\mathcal F_0\big]>0
  \qquad\text{a.s.}
\]
\end{assumption}

\begin{lem}\label{lem:be-projection}
Suppose Assumptions~\ref{ass:stationaryDependence},
\ref{ass:hypo_sur_g}, \ref{ass:pdHessian} and \ref{ass:nondegError} hold. Then
there exists a finite constant $B^\ast$, depending only on $\lambda_L$ and the
dependence constants $(\rho,C)$ of Assumption~\ref{ass:stationaryDependence},
such that
\[
  \sup_{\|u\|=1}\ \sup_{x\in\R}
  \Big|\P\big(u^\top\sqrt n\,L_0^{-1/2}S_n\le x\big)-\Phi(x)\Big|
  \le \frac{B^\ast}{\sqrt n}.
\]
\end{lem}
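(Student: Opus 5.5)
The plan is to reduce the statement to the one-dimensional Berry--Esseen theorem of \citet{jirak2016berry} for partial sums of functionally dependent stationary sequences. This reduction has to be carried out uniformly over the unit sphere. Fix $u$ with $\|u\|=1$ and set $v=L_0^{-1/2}u$, so that $u^\top\sqrt n\,L_0^{-1/2}S_n=n^{-1/2}\sum_{t=1}^n \xi_t(u)$ with
\[
  \xi_t(u)=e_t\,v^\top Z_{t-1},\qquad e_t=Y_t-g\big(\eta_t(\theta_0)\big).
\]
Since $(e_t)$ is a martingale difference sequence with respect to $(\mathcal F_t)$ and $Z_{t-1}$ is $\mathcal F_{t-1}$-measurable, $(\xi_t(u))$ is a stationary martingale difference sequence. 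Hence
\[
  \E\Big(n^{-1/2}\sum_t\xi_t(u)\Big)^2=\E\xi_1(u)^2=v^\top L_0v=u^\top u=1 .
\]
The long-run variance therefore equals $1$ exactly, with no autocovariance correction and no $O(1/n)$ variance mismatch to absorb. Assumption~\ref{ass:nondegError} together with Assumption~\ref{ass:pdHessian} gives $\lambda_L>0$: if $w^\top L_0w=0$, then $\E[\E(e_1^2\mid\mathcal F_0)(w^\top Z_0)^2]=0$, which forces $w^\top Z_0=0$ a.s. and so $w=0$. Consequently $\|v\|\le\lambda_L^{-1/2}$ uniformly in $u$.

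Next I verify Jirak's conditions for $\xi_t(u)$ uniformly in $u$, namely a moment of order $q\ge3$ and a polynomial (here geometric) decay of the tail dependence sums $\Theta_{m,q}(\xi(u))$. Write $e_t=Y_t-g(\theta_0^\top Z_{t-1})$, which is a Lipschitz function of $(Y_t,Z_{t-1})$ by Assumption~\ref{ass:hypo_sur_g}. Coupling $\varepsilon_0$ gives
\[
  |\xi_t-\widetilde\xi_t|\le |e_t-\widetilde e_t|\,\|v\|\|Z_{t-1}\|+|\widetilde e_t|\,\|v\|\|Z_{t-1}-\widetilde Z_{t-1}\|.
\]
Apply H\"older with $1/q=1/(q+\delta)+1/r$, exactly as explained after Assumption~\ref{ass:stationaryDependence}. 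The increments are measured in $L^{q+\delta}$ and controlled by $\theta_{q+\delta,t}(Y),\theta_{q+\delta,t}(X)$ over the finitely many lags $t,\dots,t-p$. The envelopes $\|Z_{t-1}\|$ and $|e_t|\lesssim 1+|Y_t|+\|Z_{t-1}\|$ are in $L^r$ thanks to the moment of order $2q(q+\delta)/\delta\ge r$. This yields $\theta_{q,t}(\xi(u))\le K\lambda_L^{-1/2}\big(\sum_{j=0}^{p}\theta_{q+\delta,t-j}(Y)+\theta_{q+\delta,t-1}(X)\big)$ and hence $\Theta_{m,q}(\xi(u))\le K'\lambda_L^{-1/2}\rho^{m}$. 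Also $\|\xi_1(u)\|_q\le K''\lambda_L^{-1/2}$. All constants are independent of $u$.

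Then I would invoke the Berry--Esseen theorem of \citet{jirak2016berry}. For $q\ge3$ and $\sum_m m\,\theta_{q,m}<\infty$, which holds under geometric decay, it gives $\sup_x|\P(n^{-1/2}\sum\xi_t\le x\,\sigma)-\Phi(x)|\le B/\sqrt n$. The constant $B$ depends only on $\|\xi_1\|_q$, on the dependence sums, and on a lower bound for $\sigma^2$. Here $\sigma^2=1$, and the other quantities are bounded uniformly in $u$ as above, so taking the supremum over $u$ gives $B^\ast$.

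The main obstacle I expect is precisely this uniformity: Jirak's constant must be shown to depend on the process only through these summary quantities. That is why the uniform bounds on $\|v\|$, on the moments and on $\Theta_{m,q}$ have to be tracked explicitly, with the variance pinned at exactly one by the martingale structure. If the published form of the constant were less explicit, I would fall back on a compactness argument on the sphere combined with the Lipschitz dependence of the law of $u^\top L_0^{-1/2}S_n$ on $u$. I would regard that only as a backup.
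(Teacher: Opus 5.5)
Your proposal is correct and follows the paper's proof essentially step for step. Both use the same scalar martingale-difference projection $e_t\,(L_0^{-1/2}u)^\top Z_{t-1}$, whose variance is exactly one; both transfer the geometric decay and moment bounds uniformly in $u$ via $\|L_0^{-1/2}u\|\le\lambda_L^{-1/2}$; and both apply \citet{jirak2016berry} at $p=3$ with a constant depending only on these summary quantities. (Jirak's condition is $\sum_l l^2\,\theta_{3,l}<\infty$ rather than $\sum_m m\,\theta_{q,m}<\infty$, which is immaterial under geometric decay, and your explicit check that $\lambda_L>0$ fills in a step the paper only asserts.)
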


Lemma ~\ref{lem:be-projection} controls the leading term
$u^\top\sqrt n\,L_0^{-1/2}S_n$ of the expansion \eqref{eq:expansion}, uniformly in $\|u\|=1$. It remains to show that the other terms of that expansion are
negligible. Collecting them, we set, for $\|u\|=1$,
\[
  \widetilde R_n(u):=u^\top\!\Big(
     \sqrt n\big(L_n^{-1/2}-L_0^{-1/2}\big)S_n
     +L_n^{-1/2}R_n(h_n)
     +\big(\widehat L_n^{-1/2}\widehat A_n-L_n^{-1/2}A_n\big)h_n\Big),
  \qquad h_n=\widehat\theta_n-\theta_0,
\]
so that the studentized statistic
$u^\top\sqrt n\,\widehat L_n^{-1/2}\widehat A_n(\widehat\theta_n-\theta_0)$ equals
$u^\top\sqrt n\,L_0^{-1/2}S_n+\widetilde R_n(u)$. Our aim is to bound
$\sup_{\|u\|=1}|\widetilde R_n(u)|$ on an event of probability
$1-o(n^{-1/2})$, at the scale $\log n/\sqrt n$; combined with
Lemma ~\ref{lem:be-projection}, this transfers the Berry--Esseen bound
from the leading projection to the studentized statistic. Unlike the terms
$S_n$, $A_n$ and $R_n$ treated so far, the remainder involves the empirical
variance matrix $L_n$, whose summand $(Y_t-g(\eta_t(\theta_0)))^2Z_{t-1}Z_{t-1}^\top$
is of degree four in the observations. Controlling its fluctuation at the
required rate therefore calls for a moment of higher order than
Assumption~\ref{ass:stationaryDependence} supplies, which is the reason for the
strengthened moment condition stated in Assumption~\ref{ass:moment3r} below.

\begin{assumption}[Moment strengthening for studentization]\label{ass:moment3r}
The marginal moment condition of Assumption~\ref{ass:stationaryDependence} holds
at the order $3q(q+\delta)/\delta$: $\E|Y_0|^{3q(q+\delta)/\delta}<\infty$ and
$\E\|X_0\|^{3q(q+\delta)/\delta}<\infty$.
\end{assumption}

\begin{lem}\label{lem:be-remainder}
Suppose Assumptions~\ref{ass:stationaryDependence}, \ref{ass:hypo_sur_g},
\ref{ass:pdHessian}, \ref{ass:nondegError} and \ref{ass:moment3r} hold. 
Then there is $\widetilde a_R>0$ such that
\[
  \P\Big(\sup_{\|u\|=1}\big|\widetilde R_n(u)\big|
  \ge \widetilde a_R\,\frac{\log n}{\sqrt n}\Big)=o(n^{-1/2}),
\]
and in particular
$\sup_{\|u\|=1}\P\big(|\widetilde R_n(u)|\ge\widetilde a_R\log n/\sqrt n\big)=o(n^{-1/2})$.
\end{lem}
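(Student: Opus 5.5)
The plan is to work on a single good event $\mathcal G_n$ of probability $1-o(n^{-1/2})$. On $\mathcal G_n$ every ingredient of $\widetilde R_n(u)$ is controlled in operator norm, so the supremum over $\|u\|=1$ reduces to bounding $\|\cdot\|$ of three vectors: for any vector $v$, $\sup_{\|u\|=1}|u^\top v|=\|v\|$. The second claim of the lemma then follows trivially from the first. Concretely, let $\mathcal G_n$ be the intersection of the following events:
\begin{itemize}
\item the event of Theorem~\ref{th:concentration}, with $\|h_n\|\le a\sqrt{\log n/n}$;
\item the event of Lemma~\ref{lem:controlScore}, with $\|S_n\|\le a_S\sqrt{\log n/n}$;
\item the event of Lemma~\ref{lem:controlRn} with $r_n=a\sqrt{\log n/n}$, so that $\|R_n(h_n)\|\le a_R a^2\log n/n$;
\item an event $\|L_n-L_0\|\le a_L\sqrt{\log n/n}$;
\item an event $\|\bar A_n\|+\|\bar M_n\|\le K$, where $\bar A_n$ and $\bar M_n$ are envelope averages introduced below.
\end{itemize}
A union bound gives $\P(\mathcal G_n^c)=o(n^{-1/2})$.

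\textbf{Step 1: concentration of $L_n$.} The summand of $L_n$ is $e_t^2Z_{t-1}Z_{t-1}^\top$ with $e_t=Y_t-g(\eta_t(\theta_0))$, which has degree four. I apply the same Hölder argument used for $S_n$ and $A_n$, with $1/q=1/(q+\delta)+1/r$, and with the increment of $(Y,X)$ at order $q+\delta$ now multiplied by a degree-three envelope. That envelope is in $L^{r}$ once moments of order $3r=3q(q+\delta)/\delta$ exist, which is exactly Assumption~\ref{ass:moment3r}. Each entry is then a $q$-stable average with geometric $\Theta_{m,q}$. The Fuk--Nagaev inequality of \citet{liu2013probability}, at threshold $x=a_L\sqrt{n\log n}$, gives a polynomial term of order $n\cdot x^{-q}=O(n^{1-q/2})=o(n^{-1/2})$ since $q\ge3$, and a Gaussian term $\exp(-c a_L^2\log n)$ which is $o(n^{-1/2})$ for large $a_L$.

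\textbf{Step 2: perturbation of the matrix square root.} By Assumption~\ref{ass:nondegError} together with Assumption~\ref{ass:pdHessian}, $\lambda_L>0$. On $\mathcal G_n$ and for $n$ large, $\lambda_{\min}(L_n)\ge\lambda_L/2$. The map $M\mapsto M^{-1/2}$ is Lipschitz on $\{M\succeq(\lambda_L/2)I\}$; one can see this from the integral representation $M^{-1/2}=\frac{2}{\pi}\int_0^\infty(M+s^2I)^{-1}ds$. Hence $\|L_n^{-1/2}-L_0^{-1/2}\|\le c\|L_n-L_0\|$. The first term is then bounded by $\sqrt n\cdot c\,a_L\sqrt{\log n/n}\cdot a_S\sqrt{\log n/n}=O(\log n/\sqrt n)$. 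The second term satisfies $\|L_n^{-1/2}R_n(h_n)\|\le(\lambda_L/2)^{-1/2}a_Ra^2\log n/n$, which is even smaller. Note that as written the expansion lacks $\sqrt n$ factors on the last two terms. I will carry the $\sqrt n$ through consistently, writing $\sqrt n\,L_n^{-1/2}R_n(h_n)$, which is still $O(\log n/\sqrt n)$.

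\textbf{Step 3: the studentization term, which I expect to be the main obstacle.} I write
\[
\widehat L_n^{-1/2}\widehat A_n-L_n^{-1/2}A_n=\big(\widehat L_n^{-1/2}-L_n^{-1/2}\big)\widehat A_n+L_n^{-1/2}\big(\widehat A_n-A_n\big).
\]
Both pieces need a Lipschitz bound in $\theta$ that holds uniformly on the localization ball. Using the Lipschitz property of $g'$ from Assumption~\ref{ass:hypo_sur_g}, I get $\|\widehat A_n-A_n\|\le C\|h_n\|\,\bar R_n$. For $L_n$, I write $(Y_t-g(\eta_t(\theta)))^2-e_t^2=(g(\eta_t(\theta_0))-g(\eta_t(\theta)))(2e_t+g(\eta_t(\theta_0))-g(\eta_t(\theta)))$. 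Since $g$ is Lipschitz, this gives $\|\widehat L_n-L_n\|\le C\|h_n\|\bar M_n$, where $\bar M_n=\frac1n\sum_t(|e_t|+\|Z_{t-1}\|)\|Z_{t-1}\|^3$ is again a degree-four envelope average. The point to check is that $\bar M_n$ and $\bar A_n=\frac1n\sum_t g'(\cdot)\|Z_{t-1}\|^2$ (bounded since $g'$ is bounded) stay below a constant $K$ with probability $1-o(n^{-1/2})$. This follows from the Fuk--Nagaev inequality at a constant deviation, with the same stability argument as in Step 1, relying on the $3r$ moments. It then also follows that $\lambda_{\min}(\widehat L_n)\ge\lambda_L/4$, so the square-root Lipschitz bound applies to $\widehat L_n$ as well. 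Combining, $\|\widehat L_n^{-1/2}\widehat A_n-L_n^{-1/2}A_n\|\le c'\|h_n\|$. The third term is therefore bounded by $\sqrt n\,c'\|h_n\|^2\le c'a^2\log n/\sqrt n$.

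Summing the three bounds gives the constant $\widetilde a_R$ on $\mathcal G_n$, which completes the argument.
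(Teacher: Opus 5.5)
Your proposal is correct and follows essentially the paper's route. Both arguments use a single good event of probability $1-o(n^{-1/2})$, Fuk--Nagaev concentration of $L_n$ under the moments of Assumption~\ref{ass:moment3r}, Lipschitz continuity of $M\mapsto M^{-1/2}$ on $\{M\succeq cI\}$, and a bound $\|\widehat L_n^{-1/2}\widehat A_n-L_n^{-1/2}A_n\|_{\op}\lesssim\|h_n\|$ through degree-three and degree-four envelope averages. You are also right that the displayed expansion drops the factor $\sqrt n$ (and a sign) on the $R_n(h_n)$ and studentization terms, so the paper's $O(\log n/n)$ bounds for those two terms are really $O(\log n/\sqrt n)$, which still gives the lemma.

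One small slip: at $x=a_L\sqrt{n\log n}$ the polynomial Fuk--Nagaev term is of order $n^{1-q/2}(\log n)^{-q/2}$. For $q=3$ it is exactly this logarithmic factor that makes the term $o(n^{-1/2})$, so you should not discard it as in your $O(n^{1-q/2})$.
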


Having controlled the leading projection (Lemma ~\ref{lem:be-projection})
and the remainder (Lemma~\ref{lem:be-remainder}), we combine the two into a
Berry--Esseen bound for the studentized statistic
$ u^\top \sqrt n\,\widehat L_n^{-1/2}\widehat A_n(\widehat\theta_n-\theta_0)$.

\begin{theo}\label{th:BEprojection}
Consider the model \eqref{eq::nonlinear} and the estimator \eqref{eq:qmle}.
Suppose Assumptions~\ref{ass:stationaryDependence}, \ref{ass:hypo_sur_g},
\ref{ass:pdHessian}, \ref{ass:nondegError} and \ref{ass:moment3r} hold. Then
\[
  \sup_{\|u\|=1}\ \sup_{x\in\R}
  \Big|\P\big(u^\top\sqrt n\,\widehat L_n^{-1/2}\widehat A_n(\widehat\theta_n-\theta_0)\le x\big)-\Phi(x)\Big|
  =O\!\Big(\frac{\log n}{\sqrt n}\Big).
\]
\end{theo}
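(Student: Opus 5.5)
The plan is the standard perturbation (sandwich) argument: combine Lemma~\ref{lem:be-projection} for the leading term with Lemma~\ref{lem:be-remainder} for the remainder, and pay for the remainder through the Lipschitz continuity of $\Phi$. Fix $\|u\|=1$ and write $T_n(u)=u^\top\sqrt n\,\widehat L_n^{-1/2}\widehat A_n(\widehat\theta_n-\theta_0)$ and $W_n(u)=u^\top\sqrt n\,L_0^{-1/2}S_n$. By the expansion displayed before Assumption~\ref{ass:nondegError}, $T_n(u)=W_n(u)+\widetilde R_n(u)$. This identity holds on the event of Theorem~\ref{th:concentration} where $\widehat\theta_n$ exists uniquely and $\widehat L_n$, $L_n$ are invertible.

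Let $\epsilon_n=\widetilde a_R\log n/\sqrt n$. Let $\mathcal G_n$ be the intersection of three events: the event of Theorem~\ref{th:concentration}, the event $\{\sup_{\|u\|=1}|\widetilde R_n(u)|<\epsilon_n\}$ from Lemma~\ref{lem:be-remainder}, and the event that $\widehat L_n$ and $L_n$ are invertible. Each of these has probability $1-o(n^{-1/2})$. For the last one, I would note that it is already implicit in the proof of Lemma~\ref{lem:be-remainder}, which controls $\widehat L_n-L_0$ via Fuk--Nagaev together with $\lambda_L>0$. By a union bound, $\P(\mathcal G_n^c)=o(n^{-1/2})$, and this bound does not depend on $u$.

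On $\mathcal G_n$ we have $W_n(u)-\epsilon_n\le T_n(u)\le W_n(u)+\epsilon_n$ for every $u$. Hence, for all $x$,
\[
\P(W_n(u)\le x-\epsilon_n)-\P(\mathcal G_n^c)\le \P(T_n(u)\le x)\le \P(W_n(u)\le x+\epsilon_n)+\P(\mathcal G_n^c).
\]
Applying Lemma~\ref{lem:be-projection} at the points $x\pm\epsilon_n$ gives
\[
|\P(T_n(u)\le x)-\Phi(x)|\le \frac{B^\ast}{\sqrt n}+\sup_x|\Phi(x\pm\epsilon_n)-\Phi(x)|+o(n^{-1/2}).
\]
The middle term is at most $\epsilon_n/\sqrt{2\pi}$; this is the Gaussian anti-concentration step. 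Every term is uniform in $u$ and $x$, and the total is $O(\log n/\sqrt n)$.

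The only delicate point is uniformity over directions. The same exceptional event must serve all $u$, which is why I would use the first, supremum form of Lemma~\ref{lem:be-remainder} rather than its pointwise corollary, although the pointwise version would in fact suffice since $\sup_u$ is taken outside the probability. Uniformity also requires that the constant in Lemma~\ref{lem:be-projection} not depend on $u$, and the lemma states exactly this. I would also remark that $T_n(u)$ is defined off the event of Theorem~\ref{th:concentration} by an arbitrary convention, which does not matter because that event's complement is absorbed in $\P(\mathcal G_n^c)$. The genuine work sits in the two lemmas, so the theorem itself involves no real obstacle beyond this bookkeeping.
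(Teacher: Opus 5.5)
Your proposal is correct and follows essentially the paper's argument. The paper works with the half-space $B_{u,x}=\{z:u^\top z\le x\}$ and its $\delta$-enlargement, which is just $B_{u,x+\delta}$, taking $\delta_n=\widetilde a_R\log n/\sqrt n$; it then uses Lemma~\ref{lem:be-projection} for the leading term, Lemma~\ref{lem:be-remainder} for $\P(\|\overline R_n\|>\delta_n)=o(n^{-1/2})$, and the anti-concentration bound $|\Phi(x\pm\delta_n)-\Phi(x)|\le\delta_n/\sqrt{2\pi}$. Your one-dimensional version, shifting $x$ by $\pm\epsilon_n$ on a single $u$-free good event, is the same argument without the smoothing and boundary-slab bookkeeping, and it gives the same $O(\log n/\sqrt n)$ bound uniformly in $(u,x)$.
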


One can easily show  the following result.

\begin{lem}\label{lem:approxVar}
Under Assumptions~\ref{ass:stationaryDependence}, \ref{ass:hypo_sur_g},
\ref{ass:pdHessian}, \ref{ass:nondegError} and \ref{ass:moment3r}, there is $C<\infty$ such that
\[
  \P\Big(\big\|\widehat A_n^{-1}\widehat L_n\widehat A_n^{-1}
              -A_0^{-1}L_0A_0^{-1}\big\|_{\op}\le C\sqrt{\tfrac{\log n}{n}}\Big)
  =1-o(n^{-1/2}).
\]
\end{lem}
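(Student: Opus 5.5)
We prove Lemma~\ref{lem:approxVar} by perturbation. We control $\widehat A_n-A_0$ and $\widehat L_n-L_0$ in operator norm at rate $\sqrt{\log n/n}$ on events of probability $1-o(n^{-1/2})$, and then transfer the bound to the sandwich through the algebraic identity
\[
  \widehat A_n^{-1}\widehat L_n\widehat A_n^{-1}-A_0^{-1}L_0A_0^{-1}
  =(\widehat A_n^{-1}-A_0^{-1})\widehat L_n\widehat A_n^{-1}
   +A_0^{-1}(\widehat L_n-L_0)\widehat A_n^{-1}
   +A_0^{-1}L_0(\widehat A_n^{-1}-A_0^{-1}),
\]
together with $\widehat A_n^{-1}-A_0^{-1}=\widehat A_n^{-1}(A_0-\widehat A_n)A_0^{-1}$. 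On the intersection of the events where $\lambda_{\min}(\widehat A_n)>\lambda_0/4$ and $\|\widehat L_n\|_{\op}\le\|L_0\|_{\op}+1$, each term is bounded by a constant times $\|\widehat A_n-A_0\|_{\op}+\|\widehat L_n-L_0\|_{\op}$.

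\textbf{The Hessian term.} Split $\widehat A_n-A_0=(\widehat A_n-A_n)+(A_n-A_0)$. The Lipschitz property of $g'$ (Assumption~\ref{ass:hypo_sur_g}) gives
\[
  \|\widehat A_n-A_n\|_{\op}\le C\|\widehat h_n\|\,\frac1n\sum_t\|Z_{t-1}\|^3 .
\]
On the event of Theorem~\ref{th:concentration}, $\|\widehat h_n\|\le a\sqrt{\log n/n}$. The average $\frac1n\sum_t\|Z_{t-1}\|^3$ is bounded by $2\E\|Z_0\|^3$ with probability $1-o(n^{-1/2})$, by the same Fuk--Nagaev argument used for $\bar R_n$ in Lemma~\ref{lem:controlRn}. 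The entries of $A_n-A_0$ are averages of centred $q$-stable sequences. The Fuk--Nagaev argument of Lemma~\ref{lem:pbHessianBound}, applied at threshold $a_A\sqrt{\log n/n}$ instead of a constant, gives $\|A_n-A_0\|_{\op}\le a_A\sqrt{\log n/n}$ with probability $1-o(n^{-1/2})$. This is the same computation as in Lemma~\ref{lem:controlScore}. The Gaussian part is $\exp(-c\,a_A^2\log n)$, which is $o(n^{-1/2})$ once $a_A$ is large. The polynomial part is $n^{1-q/2}(\log n)^{-q/2}$, which is $o(n^{-1/2})$ because $q\ge3$. The bound $\lambda_{\min}(\widehat A_n)>\lambda_0/4$ then follows by Weyl's inequality.

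\textbf{The variance term.} Split $\widehat L_n-L_0=(\widehat L_n-L_n)+(L_n-L_0)$. Writing $e_t=Y_t-g(\eta_t(\theta_0))$ and $\widehat e_t=Y_t-g(\eta_t(\widehat\theta_n))$, we have $|\widehat e_t-e_t|\le C\|\widehat h_n\|\|Z_{t-1}\|$, so
\[
  |\widehat e_t^{\,2}-e_t^2|\le C\|\widehat h_n\|\,\|Z_{t-1}\|\,\big(2|e_t|+C\|\widehat h_n\|\|Z_{t-1}\|\big).
\]
Hence $\|\widehat L_n-L_n\|_{\op}$ is bounded by $\|\widehat h_n\|$ times averages of $|e_t|\|Z_{t-1}\|^3$ and $\|Z_{t-1}\|^4$. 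These averages are bounded in probability $1-o(n^{-1/2})$ by Fuk--Nagaev again, using $|e_t|\le|Y_t|+|g(0)|+C\|\theta_0\|\|Z_{t-1}\|$.

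\textbf{Main obstacle.} The obstacle is $L_n-L_0$. Its summand $e_t^2Z_{t-1}Z_{t-1}^\top$ has degree four, and we must verify that it is $q$-stable with geometrically decaying $\Theta_{m,q}$. We do this as in the discussion after Assumption~\ref{ass:stationaryDependence}. A coupling increment of the product is bounded by a Lipschitz increment of $(Y,X)$, measured in $L^{q+\delta}$, times a cubic envelope $(1+|Y|+\|X\|+\|Z\|)^3$, measured in $L^{r}$ with $1/q=1/(q+\delta)+1/r$. This requires moments of order $3r=3q(q+\delta)/\delta$, which is exactly Assumption~\ref{ass:moment3r}. With this in hand, the Fuk--Nagaev inequality of \citet{liu2013probability} at threshold $a_L\sqrt{\log n/n}$ gives the bound with exceptional probability $o(n^{-1/2})$, as above. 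The same envelope argument supplies the moments needed for the third- and fourth-degree averages in the previous step.

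\textbf{Conclusion.} A union bound over the finitely many events (Theorem~\ref{th:concentration}, the entrywise bounds on $A_n-A_0$ and $L_n-L_0$, and the envelope averages) yields the claim with a suitable constant $C$.
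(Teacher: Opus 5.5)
Your proposal is correct and follows essentially the same route as the paper. The paper works on the event $\mathcal G_n$ from the proof of Lemma~\ref{lem:be-remainder} and bounds $\|A_n-A_0\|_{\op}$, $\|\widehat A_n-A_n\|_{\op}$ and $\|\widehat L_n-L_0\|_{\op}$ at rate $\sqrt{\log n/n}$ via Fuk--Nagaev and the Lipschitz properties of $g,g'$; it then concludes with Weyl's inequality and exactly your three-term sandwich identity. Your version is in fact slightly more careful in two places: you make explicit that the Hessian bound is needed at threshold $a_A\sqrt{\log n/n}$ (Lemma~\ref{lem:pbHessianBound} states only a fixed level, although its proof yields the rate), and you correctly use $\frac1n\sum_t\|Z_{t-1}\|^3$ where the paper's proof writes $\|Z_{t-1}\|^2$.
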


By Lemma~\ref{lem:approxVar}, the sandwich estimator
$ \widehat A_n^{-1}\widehat L_n\widehat A_n^{-1}$ is consistent for
$A_0^{-1}L_0A_0^{-1}$ at rate $\sqrt{\log n/n}$ on an event of probability
$1-o(n^{-1/2})$. Fix a coordinate $j\in\{1,\dots,d\}$ and let $u_j$ be the
$j$-th canonical basis vector. Then
$u_j^\top\sqrt n(\widehat\theta_n-\theta_0)=\sqrt n(\widehat\theta_{n,j}-\theta_{0,j})$,
and from the expansion \eqref{eq:expansion},
\[
  \frac{\sqrt n\,(\widehat\theta_{n,j}-\theta_{0,j})}{\widehat s_{n,j}}
  = w_{j,n}^\top\widehat H_n+o_{\P}\!\Big(\frac{\log n}{\sqrt n}\Big),
  \qquad
  \widehat s_{n,j}^2=(\widehat V_n)_{jj},\quad
  w_{j,n}=\frac{\widehat L_n^{1/2}\widehat A_n^{-1}u_j}{\widehat s_{n,j}},\ \|w_{j,n}\|=1,
\]
where $\widehat H_n=\sqrt n\,\widehat L_n^{-1/2}\widehat A_n(\widehat\theta_n-\theta_0)$.
The direction $w_{j,n}$ is data-dependent; however, on the event
$\mathcal G_n$ it lies within $O(\sqrt{\log n/n})$ of the deterministic unit
vector $w_{j,0}=L_0^{1/2}A_0^{-1}u_j/s_{j,0}$, $s_{j,0}^2=(V_0)_{jj}$, by
Lemma~\ref{lem:approxVar}. Combining the half-space form of
Theorem~\ref{th:BEprojection}   with this $O(\sqrt{\log n/n})$ perturbation of the
direction and the Gaussian anti-concentration bound
$\sup_x|\Phi(x)-\Phi(x')|\le|x-x'|/\sqrt{2\pi}$ gives
\begin{equation}\label{eq:coordBE}
  \sup_{x\in\R}
  \Big|\P\Big(\tfrac{\sqrt n(\widehat\theta_{n,j}-\theta_{0,j})}{\widehat s_{n,j}}\le x\Big)-\Phi(x)\Big|
  =O\!\Big(\frac{\log n}{\sqrt n}\Big),\qquad j=1,\dots,d.
\end{equation}
Consequently, let $z_{1-\alpha}=\Phi^{-1}(1-\alpha)$.   The one-sided interval
$\big(-\infty,\ \widehat\theta_{n,j}+n^{-1/2}\widehat s_{n,j}\,z_{1-\alpha}\big]$
has coverage
\[
  \P\big(\theta_{0,j}\le\widehat\theta_{n,j}+n^{-1/2}\widehat s_{n,j}\,z_{1-\alpha}\big)
  =\P\big(T_{n,j}^*\ge -z_{1-\alpha}\big)
  =\Phi(z_{1-\alpha})+O\!\Big(\frac{\log n}{\sqrt n}\Big)
  =1-\alpha+O\!\Big(\frac{\log n}{\sqrt n}\Big),
\]
and the two-sided interval
$\widehat\theta_{n,j}\pm n^{-1/2}\widehat s_{n,j}\,z_{1-\alpha/2}$ has coverage
\[
  \P\big(|\tfrac{\sqrt n(\widehat\theta_{n,j}-\theta_{0,j})}{\widehat s_{n,j}}|\le z_{1-\alpha/2}\big)
  =1-\alpha+O\!\Big(\frac{\log n}{\sqrt n}\Big).
\]

Partition the index set $\{1,\dots,d\}$ into two disjoint sets $J^*$ and
$J^{*c}$, and consider the model-selection test
\[
  H_0:\ \theta_{0,j}=0\ \text{ for all }j\in J^{*c}
  \qquad\text{versus}\qquad
  H_1:\ \theta_{0,j}\neq 0\ \text{ for some }j\in J^{*c}.
\]
A classical Wald-type procedure would require a Berry--Esseen bound for the
whole vector $\sqrt n\,\widehat L_n^{-1/2}\widehat A_n(\widehat\theta_n-\theta_0)$
over convex  sets, that is, the multivariate convex-set version of
Theorem~\ref{th:BEprojection} rather than its projections. Such a bound can be
obtained by specialising a Gaussian approximation for high-dimensional dependent
data, as in \citet{JinyuanCLTDependent}. In fixed dimension, however, this route
is neither necessary nor sharp: those results impose tail and moment conditions
calibrated to a growing dimension. We therefore do not pursue the Wald route and
propose instead a conservative omnibus test of Bonferroni type
\citep{bonferroni1936teoria}.

Under $H_0$, $\theta_{0,j}=0$ for $j\in J^{*c}$; set
\[
  T_{n,j}^*=\frac{\sqrt n\,\widehat\theta_{n,j}}{\widehat s_{n,j}},
  \quad j\in J^{*c},
  \qquad
  T_n^*=\max_{j\in J^{*c}}\big|T_{n,j}^*\big|,
\]
with $\widehat s_{n,j}^2=(\widehat A_n^{-1}\widehat L_n\widehat A_n^{-1})_{jj}$.
Let $c^*(\alpha)$ be the Bonferroni critical value solving
\[
  \sum_{j\in J^{*c}}\P\big(|Z|>c^*(\alpha)\big)=\alpha,
  \qquad Z\sim N(0,1),
  \qquad\text{i.e.}\qquad
  c^*(\alpha)=\Phi^{-1}\!\Big(1-\frac{\alpha}{2\,|J^{*c}|}\Big).
\]

\begin{cor}\label{cor:bonferroni-test}
Under Assumptions~\ref{ass:stationaryDependence}, \ref{ass:hypo_sur_g},
\ref{ass:pdHessian}, \ref{ass:nondegError} and \ref{ass:moment3r}, the test that rejects $H_0$ when $T_n^*>c^*(\alpha)$ has, under
$H_0$, asymptotic level at most $\alpha$:
\[
  \P\big(T_n^*>c^*(\alpha)\big)\le \alpha+O\!\Big(\frac{\log n}{\sqrt n}\Big).
\]
\end{cor}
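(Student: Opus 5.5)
The plan is a union bound over the coordinates in $J^{*c}$, with each term controlled by the coordinatewise Berry--Esseen bound \eqref{eq:coordBE}. Under $H_0$ we have $\theta_{0,j}=0$ for every $j\in J^{*c}$, so $T_{n,j}^*=\sqrt n(\widehat\theta_{n,j}-\theta_{0,j})/\widehat s_{n,j}$ is exactly the studentized coordinate appearing in \eqref{eq:coordBE}. The first step is the event inclusion
\[
\{T_n^*>c^*(\alpha)\}\subseteq\bigcup_{j\in J^{*c}}\{|T_{n,j}^*|>c^*(\alpha)\},
\]
which gives
\[
\P\big(T_n^*>c^*(\alpha)\big)\le\sum_{j\in J^{*c}}\P\big(|T^*_{n,j}|>c^*(\alpha)\big).
\]
This step needs no information on the joint law of the coordinates. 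That is why the multivariate convex-set approximation is avoided.

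The second step bounds each summand. Write $\P(|T^*_{n,j}|>c)=\P(T^*_{n,j}>c)+\P(T^*_{n,j}<-c)$. The first piece equals $1-\P(T^*_{n,j}\le c)$, which is $1-\Phi(c)+O(\log n/\sqrt n)$ by \eqref{eq:coordBE}. The second piece is at most $\P(T^*_{n,j}\le -c)=\Phi(-c)+O(\log n/\sqrt n)$. Adding the two gives
\[
\P(|T^*_{n,j}|>c)\le 2\big(1-\Phi(c)\big)+O(\log n/\sqrt n)=\P(|Z|>c)+O(\log n/\sqrt n),
\]
and the $O$-term does not depend on $c$. Take $c=c^*(\alpha)$ and sum over the at most $d$ indices in $J^{*c}$. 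By the definition of $c^*(\alpha)$ the Gaussian parts add up to exactly $\alpha$, and $d$ copies of the remainder stay $O(\log n/\sqrt n)$ because $d$ is fixed.

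The only real obstacle is making \eqref{eq:coordBE} rigorous, since the direction $w_{j,n}$ is data-dependent. I would argue as follows. Let $\mathcal G_n$ be the intersection of the events in Theorem~\ref{th:concentration}, Lemma~\ref{lem:approxVar} and Lemma~\ref{lem:be-remainder}; it has probability $1-o(n^{-1/2})$. On $\mathcal G_n$, replace $T_{n,j}^*$ by the projection of the leading term $\sqrt n L_0^{-1/2}S_n$ onto the fixed unit vector $w_{j,0}$. The error from the direction is $\|w_{j,n}-w_{j,0}\|\,\|\widehat H_n\|$. Here $\|w_{j,n}-w_{j,0}\|=O(\sqrt{\log n/n})$, and $\|\widehat H_n\|=O(\sqrt{\log n})$ on $\mathcal G_n$, because $\|\widehat\theta_n-\theta_0\|\le a\sqrt{\log n/n}$ and $\widehat L_n^{-1/2}\widehat A_n$ is bounded. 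So this error is $O(\log n/\sqrt n)$. The remaining error is $\widetilde R_n$, which is also $O(\log n/\sqrt n)$ on $\mathcal G_n$ by Lemma~\ref{lem:be-remainder}.

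It remains to turn this additive control into a distributional one. For every $x$, sandwich $\P(T^*_{n,j}\le x)$ between
\[
\P\big(w_{j,0}^\top\sqrt nL_0^{-1/2}S_n\le x\pm\kappa\log n/\sqrt n\big)\pm\P(\mathcal G_n^c).
\]
Lemma~\ref{lem:be-projection} gives the Berry--Esseen bound for the fixed direction $w_{j,0}$, and Gaussian anti-concentration, $|\Phi(x)-\Phi(x')|\le|x-x'|/\sqrt{2\pi}$, absorbs the shift $\pm\kappa\log n/\sqrt n$. Together with $\P(\mathcal G_n^c)=o(n^{-1/2})$ this gives \eqref{eq:coordBE} uniformly in $x$. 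That uniformity is exactly what the second step uses.
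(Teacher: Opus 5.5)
Your proposal is correct and follows the paper's own argument: a Bonferroni union bound over $j\in J^{*c}$, combined with the coordinatewise Berry--Esseen bound \eqref{eq:coordBE}, which the paper likewise derives from a fixed-direction bound, the $O(\sqrt{\log n/n})$ perturbation of $w_{j,n}$ around $w_{j,0}$, and Gaussian anti-concentration. The only difference is cosmetic: you apply Lemma~\ref{lem:be-projection} at $w_{j,0}$ and absorb $\widetilde R_n(w_{j,0})$ yourself, while the paper cites Theorem~\ref{th:BEprojection}, whose proof performs exactly that step.
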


\section{Numerical study} 

 \subsection{Numerical illustration of the localization rate}

We illustrate the finite-sample behaviour of Theorem~\ref{th:concentration}
through a Monte Carlo experiment on a bounded-response nonlinear autoregression.
We set
\[
  \theta_0=(\omega_0,\alpha_0,\gamma_0)^\top=(0.3,\,0.5,\,-1.2)^\top,
  \qquad Y_0=0.5,\quad X_0=0.5 .
\]
The covariate follows the AR(1) recursion
$X_t=0.35\,X_{t-1}+\varepsilon_t$ with $(\varepsilon_t)$ independent standard
Gaussian. Conditional on the past, the response is Beta-distributed in the
mean--precision parametrization of \eqref{eq:beta},
\[
  Y_t\mid\mathcal F_{t-1}\sim\mathrm{Beta}\big(\mu_t\phi,(1-\mu_t)\phi\big),
  \qquad
  \mu_t=F\big(\eta_t(\theta_0)\big),\quad
  F(x)=\frac{e^{x}}{1+e^{x}},
\]
with $\phi=\phi_0$ fixed (we use $\phi_0=1$), so that
$\mu_t=F(0.3+0.5\,Y_{t-1}-1.2\,X_{t-1})$. For each
\(
  n\in\{1000,5000,10000,25000,50000,100000,500000\},
\)
the estimator $\widehat\theta_n=(\widehat\omega_n,\widehat\alpha_n,\widehat\gamma_n)^\top$
solves the estimating equation \eqref{eq:qmle}
and the experiment is replicated $M=10{,}000$ times.

Theorem~\ref{th:concentration} states that, with probability $1-o(n^{-1/2})$,
$\|\widehat\theta_n-\theta_0\|\le a\sqrt{\log n/n}$ for a constant $a$. The rate
is $r_n=\sqrt{\log n/n}$; the constant $a$ is unknown, and the aim of the
experiment is to locate the smallest $a$ for which the localization event holds
with probability close to one. To this end we record, for each replication, the
Euclidean error $\|\widehat\theta_n-\theta_0\|$ and the normalized error
\[
  T_n=\frac{\|\widehat\theta_n-\theta_0\|}{\sqrt{\log n/n}} .
\]
We report the Monte Carlo mean error
$M^{-1}\sum_{m}\|\widehat\theta_n^{(m)}-\theta_0\|$, its normalized counterpart
$M^{-1}\sum_{m}\|\widehat\theta_n^{(m)}-\theta_0\|/r_n$, the empirical localization probabilities
\[
  \widehat{\mathbb P}(T_n\le a)
  =\frac1M\sum_{m=1}^{M}
  \mathbf 1\Big\{\|\widehat\theta_n^{(m)}-\theta_0\|\le a\sqrt{\log n/n}\Big\},
  \qquad a\in\{2,4,6\},
\]
and the solver convergence rate (the proportion of replications in which the
root-finding algorithm returned a solution).

\begin{table}[!ht]
\centering
\caption{Finite-sample localization of $\widehat\theta_n$ for the beta
autoregression \eqref{eq:beta}, based on $M=10{,}000$ Monte Carlo replications
per sample size.}
\label{tab:localization}
\begin{tabular}{rccccccc}
\hline
$n$ & Mean error & Mean error$/r_n$ & Solver conv.
    & $\widehat{\mathbb P}(T_n\le 2)$ & $\widehat{\mathbb P}(T_n\le 4)$
    & $\widehat{\mathbb P}(T_n\le 6)$ \\
\hline
$1000$   & 0.1593 & 1.917 & 1 & 0.626 & 0.944 & 1.000  \\
$5000$   & 0.0743 & 1.801 & 1 & 0.660 & 0.964 & 1.000  \\
$10000$  & 0.0525 & 1.730 & 1 & 0.662 & 0.978 & 1.000  \\
$25000$  & 0.0330 & 1.638 & 1 & 0.708 & 0.976 & 1.000  \\
$50000$  & 0.0236 & 1.605 & 1 & 0.728 & 0.988 & 1.000  \\
$100000$ & 0.0161 & 1.498 & 1 & 0.752 & 0.996 & 1.000  \\
$500000$ & 0.0068 & 1.324 & 1 & 0.824 & 0.992 & 1.000  \\
\hline
\end{tabular}
\end{table}
The results are consistent with Theorem~\ref{th:concentration}. The mean error
decreases monotonically in $n$, from $0.159$ at $n=1000$ to $0.0068$ at
$n=5\times10^{5}$, confirming consistency. The normalized mean error
 stays bounded and decreases slowly, from $1.92$ to $1.32$; a
bounded normalized error is exactly what the rate $\sqrt{\log n/n}$ asserts,
while its slow decrease indicates that the logarithmic factor is not needed to
control the error and that the constant $a$ can be taken moderate. Consistently,
the localization probabilities identify $a$: $\widehat{\mathbb P}(T_n\le 2)$ rises
from $0.63$ to $0.82$, $\widehat{\mathbb P}(T_n\le 4)$ exceeds $0.94$ at every $n$
(reaching $0.996$), and $\widehat{\mathbb P}(T_n\le 6)$   equals one throughout. Thus a radius with
$a\approx 4$ already captures the estimator with probability near one, and the
required probability increases with $n$, in agreement with the
$1-o(n^{-1/2})$ statement.

\subsection{Numerical illustration of the Berry--Esseen projection bound}

The second experiment evaluates the finite-sample accuracy of the Gaussian
approximation in Theorem~\ref{th:BEprojection}, on the data-generating process  as previous. The object of interest is the distribution of the studentized
estimator, not the size of the estimation error.

For each sample size, 
we form $\widehat H_n$
and, for a unit vector $u$, the projected statistic $T_{n,u}=u^\top W_n$.
Theorem~\ref{th:BEprojection} asserts that the law of $T_{n,u}$ is close to
$N(0,1)$, uniformly over $\|u\|=1$, with error of order $\log n/\sqrt n$.

Each configuration is replicated $M=500{,}000$ times. The large value of $M$ is
required because $\widehat D_n(u)$ below is a Kolmogorov distance estimated from
an empirical distribution function: its Monte Carlo standard error is of order
$M^{-1/2}\approx1.4\times10^{-3}$, and taking a maximum over directions adds
further variability and an upward bias. The supremum over $u$ is approximated by
a finite grid $\mathcal U$ formed of the three coordinate vectors, deterministic
linear combinations of them, and $200$ random unit directions. For each
$u\in\mathcal U$ we compute
\(
  \widehat D_n(u)=\sup_{x\in\mathbb R}\big|\widehat F_{n,u}(x)-\Phi(x)\big|,
\)
where $\widehat F_{n,u}$ is the empirical distribution function of the simulated
$T_{n,u}$, and we report the three coordinate values $\widehat D_n(e_j)$, the
maximal distance $\widehat D_n^{\max}=\max_{u\in\mathcal U}\widehat D_n(u)$, its
rate-normalized version $\sqrt n\,\widehat D_n^{\max}/\log n$, and the solver
convergence rate.

\begin{center}
\begin{tabular}{rcccccc}
\hline
$n$ & $\widehat D_n(e_1)$ & $\widehat D_n(e_2)$ & $\widehat D_n(e_3)$
    & $\widehat D_n^{\max}$ & $\sqrt n\,\widehat D_n^{\max}/\log n$ & Solver conv.\\
\hline
$1000$   & 0.00610 & 0.00754 & 0.00880 & 0.01246 & 0.0571 & 1\\
$5000$   & 0.00223 & 0.00404 & 0.00450 & 0.00579 & 0.0481 & 1\\
$10000$  & 0.00235 & 0.00256 & 0.00319 & 0.00431 & 0.0468 & 1\\
$25000$  & 0.00185 & 0.00171 & 0.00301 & 0.00396 & 0.0618 & 1\\
$50000$  & 0.00134 & 0.00140 & 0.00227 & 0.00269 & 0.0556 & 1\\
$100000$ & 0.00113 & 0.00147 & 0.00260 & 0.00272 & 0.0746 & 1\\
$500000$ & 0.00094 & 0.00133 & 0.00140 & 0.00289 & 0.1557 & 1\\
\hline
\end{tabular}
\end{center}

The coordinate distances decrease with $n$: $\widehat D_n(e_1)$ falls from
$0.0061$ to $0.0009$, and the other two coordinates follow the same pattern. The
maximal distance over the grid decreases from $0.0125$ at $n=1000$ to $0.0029$ at
$n=5\times10^{5}$, so the Gaussian approximation improves not only along the
coordinate axes but across the sampled directions, in line with the uniformity
over $\|u\|=1$ asserted by the theorem.

The scaled quantity $\sqrt n\,\widehat D_n^{\max}/\log n$ ranges from $0.047$ to
$0.156$, its maximum occurring at the largest sample size $n=5\times10^{5}$.  The column decreases
or stabilizes up to $n=5\times10^{4}$ and rises at $n=10^{5}$ and
$n=5\times10^{5}$. This rise is a Monte Carlo effect, not a deterioration of the
approximation: at these sample sizes $\widehat D_n^{\max}\approx3\times10^{-3}$ is
already at the accuracy limit set by $M=5\times10^{5}$ (standard error
$\approx1.4\times10^{-3}$, inflated by the maximum over directions). 
So,  $\widehat D_n^{\max}$ decreases with $n$
and remains bounded by $\log n/\sqrt n$ (from the results on $\sqrt n\,\widehat D_n^{\max}/\log n$ at moderate size).

\section{Real data analysis}
Correlations between assets are a primary input to portfolio risk management.
In the mean--variance allocation theory of \citet{markowitz1952}, the optimal
weights depend on the covariance matrix of asset returns, hence on the pairwise
correlations; an error in the correlation inputs propagates directly into the
estimated efficient frontier and the allocation.  

Two broad routes model time-varying correlations. The first specifies the
conditional correlation as a \emph{latent} process estimated from returns; the
canonical example is the dynamic conditional correlation (DCC) GARCH model of
\citet{engle2002dynamic}, building on the multivariate GARCH literature
\citep{bollerslev1990constant}. The second models an \emph{observed} realized
correlation series, constructed from high-frequency data, as an
observation-driven time series. Within this route, \citet{gorgi2023beta}
propose a beta autoregression for realized correlations, with threshold (TAR)
and smooth-transition (STAR) specifications that accommodate the leverage
effect. Writing the realized correlation $r_t\in[-1,1]$ in the unit interval
through $Y_t=(r_t+1)/2\in[0,1]$, they model $Y_t$ by a beta autoregression whose
conditional mean is driven by a linear predictor; to keep the conditional mean
in $(0,1)$, their specification imposes parameter constraints on that
predictor.

In our illustration we adopt the same beta-autoregression route but avoid the
constrained parametrization: we apply a logit link to the linear predictor, so
that the conditional mean lies in $(0,1)$ for every value of the parameters and
no sign or range restriction on  the parameters is required.  

The empirical analysis uses one-minute intraday data for five large-cap
technology stocks: Microsoft (MSFT), Tesla (TSLA), NVIDIA (NVDA), Apple (AAPL)
and Amazon (AMZN). Each raw file contains, at the one-minute frequency,  the open, high, low and close prices. Market-level covariates are
built from daily data on QQQ (Nasdaq proxy) and US30 (Dow~Jones proxy). The sample period runs from 1 July 2021 to 31 January 2024.

For each stock, the
one-minute closing price is retained and used to compute intraday log-returns.
For stock $j$, the one-minute log-return at time $t$ is
\(
  r_{j,t}=\log\!\big(P_{j,t}/P_{j,t-1}\big),
\)
where $P_{j,t}$ is the one-minute closing price. Returns spanning large time
gaps are excluded, so that overnight and non-adjacent price pairs do not enter
the intraday returns.
Daily realized correlations are then computed from the intraday return series.
For each trading day and each unordered pair among the five stocks, the two
return series are aligned on their exact one-minute timestamps, and the daily
correlation is computed from the synchronized returns:
\(
  \widehat\rho_{jk,d}=\operatorname{cor}\big(r_{j,t},r_{k,t}:t\in d\big),
\)
evaluated over all common one-minute timestamps within day $d$. This yields ten daily correlation series, one
per stock pair.

The daily market covariates are built from the QQQ and US30 files. For QQQ the
adjusted closing price is used when available, and the closing price otherwise;
for US30 the daily closing price is used. Market variation is measured by the
squared daily log-return,
\(
  x_t=\Big\{\log\!\big(M_t/M_{t-1}\big)\Big\}^{2},
\)
where $M_t$ is the relevant daily market price. This gives two covariates: the
squared daily log-return of QQQ and of US30.

\subsection*{Estimated models and results}
 
Conditionally on the past, $Y_d$ is modeled by the autoregressive specification
\eqref{eq:beta} with order $p=7$, a logit link, and two market covariates. The
choice $p=7$ is intended to account for possible weekly seasonal effects. The
conditional mean is specified as
\[
  \E[Y_d\mid\mathcal F_{d-1}]
  =
  F\Bigg(
    \omega
    +
    \sum_{i=1}^{7}\alpha_i Y_{d-i}
    +
    \gamma_{\mathrm{NASDAQ}}\,x^{\mathrm{QQQ}}_{d-1}
    +
    \gamma_{\mathrm{DOWJONES}}\,x^{\mathrm{US30}}_{d-1}
  \Bigg),
\]
where $F$ denotes the logistic distribution function, and
$x^{\mathrm{QQQ}}_{d-1}$ and $x^{\mathrm{US30}}_{d-1}$ are the lagged squared
daily log-returns of QQQ and US30, respectively. The parameters are estimated by the first-moment
estimating equation \eqref{eq:qmle}; the reported standard errors are the
sandwich values $\widehat s_{n,j}$. The last three columns of
Table~\ref{tab:nonlinear-correlation-models} test
$H_0:\gamma_{\mathrm{NASDAQ}}=\gamma_{\mathrm{DOWJONES}}=0$ through the Bonferroni
maximum statistic $T_{\max}$ of Corollary~\ref{cor:bonferroni-test}, compared
with the critical value $\Phi^{-1}(1-\alpha/4)=2.2414$, i.e.\ $\alpha=0.05$ with
$|J^{*c}|=2$.

Table~\ref{tab:nonlinear-correlation-models} reports the estimates for the ten stock pairs. The
autoregressive structure is strong and consistent across pairs: the intercept is
negative throughout ($\omega\in[-2.82,-2.10]$) and the first lag is large and
positive ($\widehat\alpha_1\in[1.93,2.55]$, with standard errors around
$0.2$--$0.3$), so realized correlations are highly persistent at the one-day
horizon. The second lag is positive and, in most pairs, sizeable relative to its
standard error ($\widehat\alpha_2\in[0.66,0.94]$); lags $3$ through $7$ are
individually smaller and, given standard errors of order $0.22$--$0.31$, largely
indistinguishable from zero on a per-coefficient basis, with occasional
exceptions at lags $5$ and $6$.

The two market covariates behave differently. The Nasdaq coefficient
$\gamma_{\mathrm{NASDAQ}}$ is positive in every pair and comparatively large
relative to its standard error in several cases (for instance TSLA--AAPL,
$187.42$ with standard error $48.57$, and TSLA--NVDA, $207.79$ with standard
error $68.89$). The Dow~Jones coefficient $\gamma_{\mathrm{DOWJONES}}$ is
imprecisely estimated throughout: its standard error (of order $96$--$128$)
exceeds the point estimate in every pair, and its sign varies across pairs. The
large magnitudes reflect the scale of the regressors; squared daily log-returns
are of order $10^{-4}$; so the coefficients must be converted to a marginal
effect on $\E[Y_d\mid\mathcal F_{d-1}]$ at representative covariate values before
being read as economically large.

As shown in the last column of Table~\ref{tab:nonlinear-correlation-models}, the Bonferroni test
rejects $H_0$ for four of the ten pairs at the $5\%$ level (marked $*$), all of
them involving Tesla.

\begin{scriptsize}
\setlength{\tabcolsep}{2.5pt}
\renewcommand{\arraystretch}{1.05}

\begin{longtable}{lrrrrrrrrrrrrr}
\caption{Nonlinear autoregressive models for realized stock-pair correlations. Entries are raw moment-equation estimates, with robust standard errors reported in parentheses on the second line. The last column reports rejection of $H_0:\gamma_{NASDAQ}=\gamma_{DOWJONES}=0$ using the Bonferroni maximum statistic.}
\label{tab:nonlinear-correlation-models}
\\
\hline
Model & $\omega$ & $Y_{t-1}$ & $Y_{t-2}$ & $Y_{t-3}$ & $Y_{t-4}$ & $Y_{t-5}$ & $Y_{t-6}$ & $Y_{t-7}$ & NASDAQ & DOWJONES & $T_{\max}$ & Crit. & Reject \\
\hline
\endfirsthead

\multicolumn{14}{c}{\tablename~\thetable{} -- continued from previous page} \\
\hline
Model & $\omega$ & $Y_{t-1}$ & $Y_{t-2}$ & $Y_{t-3}$ & $Y_{t-4}$ & $Y_{t-5}$ & $Y_{t-6}$ & $Y_{t-7}$ & NASDAQ & DOWJONES & $T_{\max}$ & Crit. & Reject \\
\hline
\endhead

\hline
\multicolumn{14}{r}{Continued on next page} \\
\endfoot

\hline
\endlastfoot

\begin{tabular}[c]{@{}l@{}}MSFT\\vs TSLA\end{tabular}
& -2.1019 & 2.0912 & 0.7932 & 0.0871 & 0.2019 & 0.4834 & 0.5178 & 0.0578 & 160.7620 & 8.5516 & 2.6784 & 2.2414 & * \\
& (0.1221) & (0.1981) & (0.2272) & (0.2039) & (0.2124) & (0.2329) & (0.2389) & (0.2155) & (60.0207) & (96.7862) & & & \\[0.4em]

\begin{tabular}[c]{@{}l@{}}MSFT\\vs NVDA\end{tabular}
& -2.3548 & 2.1043 & 0.7482 & 0.2057 & 0.2394 & 0.8544 & 0.3191 & 0.1605 & 134.8634 & 89.4554 & 2.1052 & 2.2414 &  \\
& (0.1751) & (0.2530) & (0.2556) & (0.2375) & (0.2471) & (0.2507) & (0.2549) & (0.2319) & (64.0626) & (128.2182) & & & \\[0.4em]

\begin{tabular}[c]{@{}l@{}}MSFT\\vs AAPL\end{tabular}
& -2.8232 & 2.5536 & 0.9233 & 0.3899 & 0.3628 & 0.0522 & 0.9066 & 0.1064 & 70.2430 & 168.5717 & 1.4354 & 2.2414 &  \\
& (0.2106) & (0.2814) & (0.3057) & (0.2950) & (0.2841) & (0.2862) & (0.3034) & (0.2579) & (58.1998) & (117.4396) & & & \\[0.4em]

\begin{tabular}[c]{@{}l@{}}MSFT\\vs AMZN\end{tabular}
& -2.5873 & 2.2461 & 0.9259 & 0.2205 & 0.0583 & 0.5036 & 0.5104 & 0.4878 & 4.8961 & 233.8601 & 1.9445 & 2.2414 &  \\
& (0.1744) & (0.2400) & (0.2601) & (0.2603) & (0.2479) & (0.2528) & (0.2628) & (0.2439) & (45.7863) & (120.2678) & & & \\[0.4em]

\begin{tabular}[c]{@{}l@{}}TSLA\\vs NVDA\end{tabular}
& -2.5415 & 2.2066 & 0.6574 & 0.5440 & 0.0484 & 0.8202 & 0.2339 & 0.3670 & 207.7881 & -123.5390 & 3.0161 & 2.2414 & * \\
& (0.1512) & (0.2369) & (0.2594) & (0.2472) & (0.2409) & (0.2536) & (0.2493) & (0.2242) & (68.8930) & (96.8321) & & & \\[0.4em]

\begin{tabular}[c]{@{}l@{}}TSLA\\vs AAPL\end{tabular}
& -2.2196 & 2.1409 & 0.6911 & 0.3960 & 0.2563 & 0.4142 & 0.4471 & 0.0760 & 187.4215 & -121.0874 & 3.8588 & 2.2414 & * \\
& (0.1261) & (0.2111) & (0.2416) & (0.2150) & (0.2126) & (0.2373) & (0.2384) & (0.2376) & (48.5694) & (96.3143) & & & \\[0.4em]

\begin{tabular}[c]{@{}l@{}}TSLA\\vs AMZN\end{tabular}
& -2.2981 & 1.9314 & 0.9142 & 0.2276 & 0.0680 & 0.6384 & 0.6948 & 0.0535 & 137.2639 & -47.5586 & 2.6876 & 2.2414 & * \\
& (0.1326) & (0.2283) & (0.2239) & (0.2284) & (0.2165) & (0.2157) & (0.2320) & (0.2184) & (51.0739) & (102.2510) & & & \\[0.4em]

\begin{tabular}[c]{@{}l@{}}NVDA\\vs AAPL\end{tabular}
& -2.5560 & 2.0089 & 0.6736 & 0.5659 & 0.1296 & 0.7881 & 0.2987 & 0.4474 & 113.4015 & -12.5691 & 1.8627 & 2.2414 &  \\
& (0.1752) & (0.2468) & (0.2457) & (0.2462) & (0.2355) & (0.2486) & (0.2521) & (0.2349) & (60.8797) & (125.0844) & & & \\[0.4em]

\begin{tabular}[c]{@{}l@{}}NVDA\\vs AMZN\end{tabular}
& -2.4230 & 1.9720 & 0.9421 & 0.2188 & 0.0885 & 0.9596 & 0.3024 & 0.2306 & 81.3817 & 109.2056 & 1.6990 & 2.2414 &  \\
& (0.1506) & (0.2395) & (0.2401) & (0.2387) & (0.2288) & (0.2421) & (0.2403) & (0.2292) & (47.8995) & (108.2462) & & & \\[0.4em]

\begin{tabular}[c]{@{}l@{}}AAPL\\vs AMZN\end{tabular}
& -2.5327 & 2.2424 & 0.8100 & 0.1095 & 0.3840 & 0.4972 & 0.7819 & 0.0574 & 47.2343 & 62.8760 & 0.9835 & 2.2414 &  \\
& (0.1802) & (0.2287) & (0.2596) & (0.2443) & (0.2387) & (0.2480) & (0.2619) & (0.2396) & (48.0265) & (107.6716) & & & \\

\end{longtable}
\end{scriptsize}

\section{Concluding remarks}

  The results are established in fixed-dimension models. When
the number of parameters grows with the sample size, through a growing
autoregressive order or a high-dimensional covariate vector, the projected,
one-dimensional Berry--Esseen bound no longer controls joint functionals of the
estimator, and it would have to be replaced by a Gaussian approximation valid
uniformly over a class of sets in growing dimension. \citet{JinyuanCLTDependent}
establish such approximations for sums of high-dimensional dependent vectors over
hyper-rectangles, convex and sparsely convex sets, under the same physical
dependence measure used here. Combining their Gaussian approximation with the
estimating-equation expansion of this paper would yield a multivariate
distributional theory for the estimator, at the price of dimension-dependent
rates and stronger tail conditions.

\newpage
\section*{Supplementary materials   for The  Quasi-Likelihood  Estimator for a Weakly Dependent Nonlinear Time Series Models.} 

\paragraph*{Proof of Theorem~\ref{th:concentration}}
Set $r_n:=a\sqrt{\log n/n}\to0$, with $a>0$ chosen below, and
$\widehat h_n=\widehat\theta_n-\theta_0$.

\medskip\noindent\emph{Step 0 (convex loss).}
Since $\eta_t(\theta)=\theta^{\top}Z_{t-1}$, the estimating equation
\eqref{eq:qmle} is the stationarity condition $\nabla_\theta \ell_n(\theta)=0$ of
\[
  \ell_n(\theta)=\frac1n\sum_{t=1}^{n}\big[G(\eta_t(\theta))-Y_t\,\eta_t(\theta)\big],
  \qquad G'=g,
\]
because $\nabla_\theta \ell_n(\theta)=\frac1n\sum_{t=1}^{n}\big(g(\eta_t(\theta))-Y_t\big)Z_{t-1}
=-\frac1n\sum_{t=1}^{n}\big(Y_t-g(\eta_t(\theta))\big)Z_{t-1}$. Its Hessian is
\[
  \nabla_\theta^{2}\ell_n(\theta)=\frac1n\sum_{t=1}^{n}g'\big(\eta_t(\theta)\big)\,Z_{t-1}Z_{t-1}^{\top}.
\]
As  Assumption \ref{ass:pdHessian} holds, this is positive definite for every $\theta$, so $L_n$    is strictly convex
and has at most one critical point.

\medskip\noindent\emph{Step 1 (high-probability event).}
Let $a_S,a_R$ be the constants of Lemmas~\ref{lem:controlScore} and
\ref{lem:controlRn}, and $\lambda_0=\lambda_{\min}(A_0)>0$ that of
Lemma~\ref{lem:pbHessianBound}. With
\[
  \mathcal E_n:=\Big\{\|S_n\|\le a_S\sqrt{\tfrac{\log n}{n}}\Big\}
  \cap\Big\{\lambda_{\min}(A_n)>\tfrac{\lambda_0}{2}\Big\}
  \cap\Big\{\sup_{\|h\|\le r_n}\|R_n(h)\|\le a_R r_n^{2}\Big\},
\]
the three lemmas and the union bound give $\P(\mathcal E_n)=1-o(n^{-1/2})$.

\medskip\noindent\emph{Step 2 (existence of a root in the ball).}
On $\mathcal E_n$, $\lambda_{\min}(A_n)>\lambda_0/2>0$, so $A_n$ is invertible
with $\|A_n^{-1}\|_{\mathrm{op}}\le2/\lambda_0$, and
$\Psi_n(h)=A_n^{-1}\{S_n-R_n(h)\}$ is well defined and continuous on
$\overline B(0,r_n)$. For $\|h\|\le r_n$, using $r_n^{2}=a^{2}\log n/n$ and
$\sqrt{\log n/n}=r_n/a$,
\[
  \|\Psi_n(h)\|\le\frac{2}{\lambda_0}\big(\|S_n\|+\|R_n(h)\|\big)
  \le\frac{2}{\lambda_0}\Big(\frac{a_S}{a}+a_R r_n\Big)r_n .
\]
Choosing $a\ge 8a_S/\lambda_0$ makes the first term $\le\frac14 r_n$, and since
$r_n\to0$ there is $n_0$ with $\frac{2a_R}{\lambda_0}r_n\le\frac14$ for $n\ge n_0$;
hence $\|\Psi_n(h)\|\le\frac12 r_n\le r_n$. Thus $\Psi_n$ maps
$\overline B(0,r_n)$ continuously into itself, and Brouwer's theorem yields a
fixed point $\widehat h_n\in\overline B(0,r_n)$; by \eqref{eq:expansion},
$\widehat\theta_n=\theta_0+\widehat h_n$ solves \eqref{eq:qmle}.

\medskip\noindent\emph{Step 3 (uniqueness by strict convexity).}
On $\mathcal E_n$, $A_n=\nabla_\theta^{2}\ell_n(\theta_0)\succ0$. Since $g'>0$, this
forces $\{Z_{t-1}\}_{1\le t\le n}$ to span $\R^{d}$: if $u^{\top}Z_{t-1}=0$ for
all $t$ then $u^{\top}A_nu=\frac1n\sum_t g'(\eta_t(\theta_0))(u^{\top}Z_{t-1})^{2}=0$,
so $u=0$. By Step 0, $\ell_n$ is then strictly convex on $\R^{d}$ and has at most
one critical point; the root $\widehat\theta_n$ of Step 2 is therefore the unique
solution of \eqref{eq:qmle}, and it satisfies
$\|\widehat\theta_n-\theta_0\|\le r_n$.

\medskip\noindent\emph{Conclusion.}
For $n\ge n_0$, the event that \eqref{eq:qmle} has a unique solution
$\widehat\theta_n$ with $\|\widehat\theta_n-\theta_0\|\le a\sqrt{\log n/n}$
contains $\mathcal E_n$, whose probability is $1-o(n^{-1/2})$.
\hfill$\square$

 \paragraph*{Proof of Theorem~\ref{th:BEprojection}}
The events appearing in the theorem are half-spaces. Indeed, for
\(\|u\|=1\) and \(x\in\mathbb R\), define
\(
  B_{u,x}
  :=
  \{z\in\mathbb R^d: u^\top z\le x\}.
\)
Its boundary is the hyperplane
\(
  \partial B_{u,x}
  =
  \{z\in\mathbb R^d: u^\top z=x\}.
\)
Moreover, its \(\varepsilon\)-enlargement is
\(
  (\partial B_{u,x})^\varepsilon
  =
  \{z\in\mathbb R^d: |u^\top z-x|\le \varepsilon\},
\)
which is a slab of width \(2\varepsilon\).

Writing \(\Phi_d\) for the \(N(0,I_d)\) law, and using
\(u^\top Z\sim N(0,1)\) when \(Z\sim N(0,I_d)\) and \(\|u\|=1\), we obtain
\(
  \Phi_d\big((\partial B_{u,x})^\varepsilon\big)
  =
  \Phi(x+\varepsilon)-\Phi(x-\varepsilon)
  \le
  \frac{2\varepsilon}{\sqrt{2\pi}}.
\)
Hence half-spaces belong to the boundary-regular class 
$$\mathcal A=\Big\{B\subset\R^{d}\ \text{Borel}:\
\sup_{B\in\mathcal A}\Phi_d\big((\partial B)^\varepsilon\big)=O(\varepsilon)
\ \text{as }\varepsilon\downarrow0\Big\},
\qquad
(C)^\varepsilon=\{z:\textstyle\inf_{y\in C}\|z-y\|\le\varepsilon\}.
$$

Set
\(
  \widehat H_n
  :=
  \sqrt n\,\widehat L_n^{-1/2}
  \widehat A_n(\widehat\theta_n-\theta_0),
  \,
  \widetilde H_n
  :=
  \sqrt n\,L_0^{-1/2}S_n,
\)
and define the vector remainder
\(
  \overline R_n
  :=
  \widehat H_n-\widetilde H_n .
\)
This remainder is exactly the sum of the three terms in the expansion beyond the leading term. Moreover,
\(
  \sup_{\|u\|=1}|u^\top \overline R_n|
  =
  \|\overline R_n\|.
\)
Thus Lemma~\ref{lem:be-remainder} gives
\(
  \mathbb P\left(
    \|\overline R_n\|
    \ge
    \widetilde a_R\frac{\log n}{\sqrt n}
  \right)
  =
  o(n^{-1/2}).
\)

Let
\(
  \Delta_n
  :=
  \sup_{\|u\|=1}\sup_{x\in\mathbb R}
  \left|
    \mathbb P(u^\top\widetilde H_n\le x)-\Phi(x)
  \right|.
\)
By Lemma ~\ref{lem:be-projection},
\(
  \Delta_n=O(n^{-1/2}).
\)

The comparison rests on the following smoothing inequality: for every
\(B\in\mathcal A\) and every \(\delta>0\),
\(
  \mathbb P(\widehat H_n\in B)
  \le
  \mathbb P(\widetilde H_n\in B^\delta)
  +
  \mathbb P(\|\overline R_n\|>\delta),
\)
where
\(
  B^\delta
  :=
  \{z\in\mathbb R^d:\operatorname{dist}(z,B)\le \delta\}.
\)

 For $\|u\|=1$,
$x\in\R$, $\{u^\top\widehat H_n\le x\}=\{\widehat H_n\in B_{u,x}\}$ with
$B_{u,x}\in\mathcal A$, and $\Phi_d(B_{u,x})=\P(u^\top Z\le x)=\Phi(x)$ for
$Z\sim N(0,I_d)$. Fix such a $B=B_{u,x}$ and $\delta>0$.

\emph{Step 1 (smoothing).} By the smoothing inequality,
\[
  \P(\widehat H_n\in B)-\Phi_d(B)\le
  2\,\P(\|\overline R_n\|>\delta)+\P\big(\widetilde H_n\in(\partial B)^{2\delta}\big)+\Delta_n .
\]

\emph{Step 2 (boundary term).} Since $(\partial B)^{2\delta}=\{z:|u^\top z-x|\le 2\delta\}$,
and $\Delta_n$ bounds the distribution function of $u^\top\widetilde H_n$
uniformly (including left limits, $\Phi$ being continuous),
\[
  \P\big(\widetilde H_n\in(\partial B)^{2\delta}\big)
  =\P(u^\top\widetilde H_n\le x+2\delta)-\P(u^\top\widetilde H_n< x-2\delta)
  \le \Phi(x+2\delta)-\Phi(x-2\delta)+2\Delta_n
  \le \frac{4\delta}{\sqrt{2\pi}}+2\Delta_n,
\]
using $\sup_y\Phi'(y)=1/\sqrt{2\pi}$.

\emph{Step 3 (remainder).} Take $\delta=\delta_n:=\widetilde a_R\,\log n/\sqrt n$.
By Lemma~\ref{lem:be-remainder}, $\P(\|\overline R_n\|>\delta_n)=o(n^{-1/2})$.

\emph{Step 4 (assembly).} Combining Steps~1--3 with $\delta=\delta_n$,
\[
  \P(\widehat H_n\in B)-\Phi_d(B)\le
  2\,o(n^{-1/2})+\frac{4\widetilde a_R}{\sqrt{2\pi}}\frac{\log n}{\sqrt n}+3\Delta_n
  =O\!\Big(\frac{\log n}{\sqrt n}\Big),
\]
and the right-hand side does not depend on $(u,x)$. Applying this bound to the
half-spaces $\{u^\top z\le x\}$ and $\{u^\top z> x\}$ (both in $\mathcal A$, with
the same boundary hyperplane) yields the two-sided deviation, and
$\Delta_n=O(n^{-1/2})$ by Lemma ~\ref{lem:be-projection}. Hence
\[
  \sup_{\|u\|=1}\ \sup_{x\in\R}
  \Big|\P\big(u^\top\widehat H_n\le x\big)-\Phi(x)\Big|
  =O\!\Big(\frac{\log n}{\sqrt n}\Big).
\]
\hfill$\square$

For the analysis of $S_n$, $A_n$ and $R_n$ we work with the  the vector   
\(
  \bV_t:=\big((Y_{t-i})_{i=0:p},\,X_{t-1}\big)
       =\big(Y_t,Y_{t-1},\dots,Y_{t-p},X_{t-1}\big),
\)
 and its coupled copy $\wt\bV_t:=\big((\wt Y_{t-i})_{i=0:p},\wt X_{t-1}\big)$, where each $(\wt Y_t, \wt X_t)$
 is the coupling of  $(Y_t,X_t)$. We abbreviate the increment and the
first- and second-order envelopes by
\[
  \Delta_t:=\big\|\bV_t-\wt\bV_t\big\|,
  \qquad
  \sfE_{1,t}:=1+\|\bV_t\|+\|\wt\bV_t\|,
  \qquad
  \sfE_{2,t}:=1+\|\bV_t\|^{2}+\|\wt\bV_t\|^{2}.
\]

The symbol $\lesssim$ hides a constant depending only on the Lipschitz constant
of $g$ (Assumption~\ref{ass:hypo_sur_g}), the fixed parameters
$(\alpha_1,\dots,\alpha_p,\gamma)$, and the dimension.

We introduce the three functions of     $\bV_t$:
\begin{equation}\label{eq:def-maps}
  \begin{aligned}
    K(\bV_t)      &=\big(Y_t-g(\eta_t(\theta_0))\big)\,Z_{t-1},\\[2pt]
    \wt K(\bV_t)  &=g'\big(\eta_t(\theta_0)\big)\,Z_{t-1}Z_{t-1}^{\top},\\[2pt]
    \bar K(\bV_t) &=\Big(1+\sum_{i=1}^{p}|Y_{t-i}|+\|X_{t-1}\|\Big)\,\|Z_{t-1}\|^{2}.
  \end{aligned}
\end{equation}
The averages $\frac1n\sum_t K(\bV_t)$ and $\frac1n\sum_t \wt K(\bV_t)$ are $S_n$
and $A_n$ of \eqref{eq:expansion}, and $r_n^2 \frac1n\sum_t \bar K(\bV_t)$ dominates
$\sup_{\|h\|\le r_n}\|R_n(h)\|$.

\begin{lem}\label{lem:stoch-lip}
Under Assumption~\ref{ass:hypo_sur_g}, for every $t\in\Z$,
\begin{align}
  \big\|K(\bV_t)-K(\wt\bV_t)\big\|
    &\lesssim \sfE_{1,t}\,\Delta_t, \label{eq:score}\\[2pt]
  \big\|\wt K(\bV_t)-\wt K(\wt\bV_t)\big\|
    &\lesssim \sfE_{2,t}\,\Delta_t, \label{eq:grad-score}\\[2pt]
  \big|\bar K(\bV_t)-\bar K(\wt\bV_t)\big|
    &\lesssim \sfE_{2,t}\,\Delta_t. \label{eq:error-score}
\end{align}
\end{lem}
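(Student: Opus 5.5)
The plan is to prove the three inequalities one at a time, each by a direct deterministic computation. Each summand is a polynomial-type function of $\bV_t$. Its increment is split into a product in which exactly one factor is a difference of the original and coupled quantities, bounded by $\Delta_t$, and the remaining factors are bounded by envelopes. Three elementary facts are used throughout. First, $\|Z_{t-1}\|\le 1+\|\bV_t\|$ and $\|Z_{t-1}-\wt Z_{t-1}\|\le\Delta_t$, since $Z_{t-1}$ is $1$ together with a subvector of $\bV_t$. Second, $|\eta_t(\theta_0)-\wt\eta_t(\theta_0)|=|\theta_0^\top(Z_{t-1}-\wt Z_{t-1})|\le\|\theta_0\|\Delta_t$. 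Third, by Assumption~\ref{ass:hypo_sur_g}, $|g(u)|\le |g(0)|+C|u|$ and $\|g'\|_\infty\le C$; combined with the first two facts, these give $|g(\eta_t(\theta_0))|\lesssim 1+\|\bV_t\|$, and the same bound holds for the coupled copy.

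\emph{Bound \eqref{eq:score}.} I write
\[
K(\bV_t)-K(\wt\bV_t)=\big[(Y_t-\wt Y_t)-(g(\eta_t)-g(\wt\eta_t))\big]Z_{t-1}+\big(\wt Y_t-g(\wt\eta_t)\big)\big(Z_{t-1}-\wt Z_{t-1}\big).
\]
In the first term, the bracket is $\lesssim\Delta_t$ by the Lipschitz property of $g$, and it multiplies $\|Z_{t-1}\|\le 1+\|\bV_t\|$. In the second term, $|\wt Y_t-g(\wt\eta_t)|\lesssim 1+\|\wt\bV_t\|$, and it multiplies $\|Z_{t-1}-\wt Z_{t-1}\|\le\Delta_t$. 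Adding the two gives $\lesssim \sfE_{1,t}\Delta_t$.

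\emph{Bound \eqref{eq:grad-score}.} I write
\[
\wt K(\bV_t)-\wt K(\wt\bV_t)=\big(g'(\eta_t)-g'(\wt\eta_t)\big)Z_{t-1}Z_{t-1}^\top+g'(\wt\eta_t)\big(Z_{t-1}Z_{t-1}^\top-\wt Z_{t-1}\wt Z_{t-1}^\top\big).
\]
The first term is bounded by $C\|\theta_0\|\Delta_t(1+\|\bV_t\|)^2$, using the Lipschitz property of $g'$. For the second, $\|g'\|_\infty\le C$ and
\[
\|ZZ^\top-\wt Z\wt Z^\top\|\le(\|Z\|+\|\wt Z\|)\|Z-\wt Z\|\le(2+\|\bV_t\|+\|\wt\bV_t\|)\Delta_t .
\]
Since $(1+x)^2\le 2(1+x^2)$ and $1+x+y\le \tfrac32(1+x^2+y^2)$, both terms are $\lesssim\sfE_{2,t}\Delta_t$.

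\emph{Bound \eqref{eq:error-score}.} This is the step needing care. $\bar K$ is cubic in $\bV_t$, so the claim is that its increment costs only one degree, giving a quadratic envelope; crude bounding would produce the weaker $\sfE_{1,t}\sfE_{2,t}$. Write $a=1+\sum_{i\le p}|Y_{t-i}|+\|X_{t-1}\|$ and $b=\|Z_{t-1}\|$, with coupled versions $\wt a,\wt b$. Then $|a-\wt a|\lesssim\Delta_t$ and $|b-\wt b|\le\Delta_t$ by the reverse triangle inequality, and I use
\[
ab^2-\wt a\wt b^2=(a-\wt a)b^2+\wt a\,(b-\wt b)(b+\wt b).
\]
The first term is $\lesssim\Delta_t(1+\|\bV_t\|)^2\lesssim\sfE_{2,t}\Delta_t$. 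For the second, the key point is to keep only the two degree-one factors, $\wt a\lesssim 1+\|\wt\bV_t\|$ and $b+\wt b\le 2+\|\bV_t\|+\|\wt\bV_t\|$, rather than enlarging either factor to a quadratic envelope. By $xy\le\tfrac12(x^2+y^2)$, their product satisfies
\[
(1+\|\wt\bV_t\|)(2+\|\bV_t\|+\|\wt\bV_t\|)\lesssim 1+\|\bV_t\|^2+\|\wt\bV_t\|^2=\sfE_{2,t}.
\]
Hence the second term is also $\lesssim\sfE_{2,t}\Delta_t$, which proves \eqref{eq:error-score} as stated. All constants depend only on $C$, $\theta_0$ and the dimension, as the $\lesssim$ convention requires.
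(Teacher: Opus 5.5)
Your proposal is correct and follows the paper's proof essentially line by line. It uses the same add-and-subtract splittings ($T_1+T_2$ for the score, $S_1+S_2$ for the Hessian summand, and $(\sfA_t-\wt\sfA_t)\sfB_t+\wt\sfA_t(\sfB_t-\wt\sfB_t)$ for $\bar K$) and the same Young-inequality step that keeps the envelope in the third bound quadratic; your explicit constants, such as $|g(u)|\le|g(0)|+C|u|$ and $1+x+y\le\tfrac32(1+x^2+y^2)$, are slightly more careful than the paper's but change nothing.
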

 
\paragraph*{Proof of the lemma \ref{lem:stoch-lip}}
We use four elementary facts. Since $Z_{t-1}$ omits $Y_t$,
\begin{equation}\label{eq:facts}
  \|Z_{t-1}\|^{2}\le 1+\|\bV_t\|^{2},\qquad
  \|Z_{t-1}-\wt Z_{t-1}\|\le \Delta_t,\qquad
  |\eta_t(\theta_0)-\wt\eta_t(\theta_0)|\lesssim \Delta_t,
\end{equation}
the third because $\eta_t(\theta_0)=\theta_0^{\top}Z_{t-1}$ is linear; and by
Assumption~\ref{ass:hypo_sur_g}, $g$ is Lipschitz, so $|g(u)|\lesssim1+|u|$ and
$|g'(u)|\le C$, and $g'$ is Lipschitz. The first two relations in
\eqref{eq:facts} are inequalities, not equalities, precisely because
$(Y_{t-i})_{i=0:p}$ contains $Y_t$ whereas $Z_{t-1}$ does not.

\medskip\noindent\textbf{Proof of \eqref{eq:score}.}
Adding and subtracting $\big(\wt Y_t-g(\wt\eta_t(\theta_0))\big)Z_{t-1}$,
\[
  K(\bV_t)-K(\wt\bV_t)
  =\underbrace{\Big[(Y_t-\wt Y_t)-\big(g(\eta_t(\theta_0))-g(\wt\eta_t(\theta_0))\big)\Big]Z_{t-1}}_{T_1}
  +\underbrace{\big(\wt Y_t-g(\wt\eta_t(\theta_0))\big)\big(Z_{t-1}-\wt Z_{t-1}\big)}_{T_2}.
\]
For $T_1$: by the Lipschitz property of $g$ and \eqref{eq:facts},
$\big|(Y_t-\wt Y_t)-(g(\eta_t(\theta_0))-g(\wt\eta_t(\theta_0)))\big|
\le|Y_t-\wt Y_t|+C|\eta_t(\theta_0)-\wt\eta_t(\theta_0)|\lesssim\Delta_t$, and
$\|Z_{t-1}\|\le1+\|\bV_t\|$, hence $\|T_1\|\lesssim(1+\|\bV_t\|)\Delta_t$.
For $T_2$: $|\wt Y_t-g(\wt\eta_t(\theta_0))|\le|\wt Y_t|+|g(\wt\eta_t(\theta_0))|
\lesssim1+\|\wt\bV_t\|$, and $\|Z_{t-1}-\wt Z_{t-1}\|\le\Delta_t$, hence
$\|T_2\|\lesssim(1+\|\wt\bV_t\|)\Delta_t$. Adding,
$\|K(\bV_t)-K(\wt\bV_t)\|\lesssim\sfE_{1,t}\Delta_t$.

\medskip\noindent\textbf{Proof of \eqref{eq:grad-score}.}
Adding and subtracting $g'(\wt\eta_t(\theta_0))Z_{t-1}Z_{t-1}^{\top}$,
\[
  \wt K(\bV_t)-\wt K(\wt\bV_t)
  =\underbrace{\big(g'(\eta_t(\theta_0))-g'(\wt\eta_t(\theta_0))\big)Z_{t-1}Z_{t-1}^{\top}}_{S_1}
  +\underbrace{g'(\wt\eta_t(\theta_0))\big(Z_{t-1}Z_{t-1}^{\top}-\wt Z_{t-1}\wt Z_{t-1}^{\top}\big)}_{S_2}.
\]
For $S_1$: since $g'$ is Lipschitz,
$|g'(\eta_t(\theta_0))-g'(\wt\eta_t(\theta_0))|\lesssim\Delta_t$, and
$\|Z_{t-1}Z_{t-1}^{\top}\|=\|Z_{t-1}\|^{2}\le1+\|\bV_t\|^{2}$, hence
$\|S_1\|\lesssim(1+\|\bV_t\|^{2})\Delta_t$. For $S_2$: using
$Z_{t-1}Z_{t-1}^{\top}-\wt Z_{t-1}\wt Z_{t-1}^{\top}
=Z_{t-1}(Z_{t-1}-\wt Z_{t-1})^{\top}+(Z_{t-1}-\wt Z_{t-1})\wt Z_{t-1}^{\top}$
and $|g'|\le C$,
\[
  \|S_2\|\le C\|Z_{t-1}-\wt Z_{t-1}\|\big(\|Z_{t-1}\|+\|\wt Z_{t-1}\|\big)
  \lesssim\Delta_t\big(1+\|\bV_t\|+\|\wt\bV_t\|\big).
\]
Since a first-order envelope is dominated by a second-order one, adding gives
$\|\wt K(\bV_t)-\wt K(\wt\bV_t)\|\lesssim\sfE_{2,t}\Delta_t$.

\medskip\noindent\textbf{Proof of \eqref{eq:error-score}.}
Write $\bar K(\bV_t)=\sfA_t\sfB_t$ with
$\sfA_t=1+\sum_{i=1}^{p}|Y_{t-i}|+\|X_{t-1}\|$ and $\sfB_t=\|Z_{t-1}\|^{2}$, so
$\bar K(\bV_t)-\bar K(\wt\bV_t)=(\sfA_t-\wt\sfA_t)\sfB_t+\wt\sfA_t(\sfB_t-\wt\sfB_t)$.
By the reverse triangle inequality
$|\sfA_t-\wt\sfA_t|\le\sum_{i=1}^{p}|Y_{t-i}-\wt Y_{t-i}|+\|X_{t-1}-\wt X_{t-1}\|
\lesssim\Delta_t$, and $\sfB_t\le1+\|\bV_t\|^{2}$, so the first term is
$\lesssim(1+\|\bV_t\|^{2})\Delta_t$. For the second, $\wt\sfA_t\lesssim1+\|\wt\bV_t\|$
and, by $|\,\|Z_{t-1}\|^{2}-\|\wt Z_{t-1}\|^{2}|
\le\|Z_{t-1}-\wt Z_{t-1}\|(\|Z_{t-1}\|+\|\wt Z_{t-1}\|)
\lesssim\Delta_t(1+\|\bV_t\|+\|\wt\bV_t\|)$,
\[
  \wt\sfA_t|\sfB_t-\wt\sfB_t|
  \lesssim(1+\|\wt\bV_t\|)(1+\|\bV_t\|+\|\wt\bV_t\|)\Delta_t
  \lesssim(1+\|\bV_t\|^{2}+\|\wt\bV_t\|^{2})\Delta_t,
\]
the last step by Young's inequality. Adding gives
$|\bar K(\bV_t)-\bar K(\wt\bV_t)|\lesssim\sfE_{2,t}\Delta_t$.
\hfill $\square$

\begin{lem}\label{lem:fdm-transfer}
Let $(X_t)_{t\in\Z}$, $X_t=(X_{1,t},\dots,X_{k,t})$, be a stationary
$\R^k$-valued causal process, and let $\theta_{q,t}$ and $\Theta_{m,q}$ denote
the functional dependence coefficients defined above, with the absolute value
replaced by the Euclidean norm $\|\cdot\|$ on $\R^k$. Let $f:\R^k\to\R$ satisfy,
for constants $C\ge 0$, $\alpha\ge 0$ and all $u,v\in\R^k$,
\begin{equation}
\label{eq:condLipsPoy}
    |f(u)-f(v)|\le C\big(1+\|u\|^{\alpha}+\|v\|^{\alpha}\big)\,\|u-v\|,
\end{equation}
and set $Y_t=f(X_t)$. Let $r,s\in[1,\infty]$ with
$\tfrac1q=\tfrac1r+\tfrac1s$, and assume $\E\|X_1\|^{\alpha r}<\infty$. Then,
with $\bar C=C\big(1+2(\E\|X_1\|^{\alpha r})^{1/r}\big)<\infty$, the order-$q$
functional dependence coefficients of $(Y_t)_{t\ge 0}$ satisfy
\[
  \theta_{q,t}(f)\le \bar C\,\theta_{s,t}
  \quad(t\ge 0),
  \qquad\text{hence}\qquad
  \Theta_{m,q}(f)\le \bar C\,\Theta_{m,s}.
\]
In particular, $(Y_t)_{t\ge 0}$ is $q$-stable whenever $(X_t)_{t\in\Z}$ is
$s$-stable.
\end{lem}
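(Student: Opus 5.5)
The proof is a one-step coupling estimate followed by Hölder's inequality and summation over $t$. By definition of the functional dependence coefficients, $\theta_{q,t}(f)=\|f(X_t)-f(\widetilde X_t)\|_q$, where $\widetilde X_t$ is the coupled copy obtained by replacing $\varepsilon_0$ with $\widetilde\varepsilon_0$. Applying \eqref{eq:condLipsPoy} pointwise gives
\[
  |f(X_t)-f(\widetilde X_t)|\le C\big(1+\|X_t\|^{\alpha}+\|\widetilde X_t\|^{\alpha}\big)\,\|X_t-\widetilde X_t\| .
\]
We then take the $L^q$ norm and apply Hölder's inequality with $1/q=1/r+1/s$:
\[
  \theta_{q,t}(f)\le C\,\big\|1+\|X_t\|^{\alpha}+\|\widetilde X_t\|^{\alpha}\big\|_r\;\big\|\,\|X_t-\widetilde X_t\|\,\big\|_s .
\]
The second factor is exactly $\theta_{s,t}$.

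For the first factor, Minkowski's inequality gives $\|1\|_r+\|\,\|X_t\|^\alpha\|_r+\|\,\|\widetilde X_t\|^\alpha\|_r$. The key observation is that $\widetilde X_t$ has the same law as $X_t$: $(\widetilde\varepsilon_s)_s$ is again i.i.d. with the law of $(\varepsilon_s)_s$, since $\widetilde\varepsilon_0$ is an independent copy of $\varepsilon_0$. Stationarity then gives $\|X_t\|\overset{d}{=}\|X_1\|$. Both moment terms therefore equal $(\E\|X_1\|^{\alpha r})^{1/r}$, which is finite by assumption, and the first factor is at most $1+2(\E\|X_1\|^{\alpha r})^{1/r}$. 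This yields $\theta_{q,t}(f)\le\bar C\,\theta_{s,t}$ for every $t\ge0$.

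Summing over $i\ge m$ gives $\Theta_{m,q}(f)\le\bar C\,\Theta_{m,s}$. Taking $m=0$ shows that $s$-stability of $(X_t)$ implies $q$-stability of $(Y_t)$. The constant $\bar C$ does not depend on $t$, so the sum passes through.

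The main points to handle carefully are the edge cases $r=\infty$ or $s=\infty$, where Hölder and the moment bound should be read with essential suprema; in particular $r=\infty$ requires $\alpha=0$ or bounded $X$, and the stated moment hypothesis covers this reading. One must also check that $Y_t=f(X_t)$ is itself causal, with $\widetilde Y_t=f(\widetilde X_t)$, so that the coefficients of $(Y_t)$ are computed with the same coupling. Nothing else is delicate.
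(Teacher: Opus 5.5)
Your proof is correct and follows the paper's argument step for step: you apply the polynomial-Lipschitz condition pointwise, then H\"older with $1/q=1/r+1/s$, then Minkowski together with $\widetilde X_t \overset{d}{=} X_t$ and stationarity to bound the envelope by $1+2(\E\|X_1\|^{\alpha r})^{1/r}$, and finally sum over $t\ge m$. Your remarks that $\widetilde Y_t=f(\widetilde X_t)$ is the correct coupled version, and on reading the $r=\infty$ case with essential suprema, are accurate; the paper handles the first explicitly and does not address the second.
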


\paragraph*{Proof of Lemma~\ref{lem:fdm-transfer}}
Since $X_t=F(\varepsilon_t,\varepsilon_{t-1},\dots)$ is a Bernoulli shift,
$Y_t=f(X_t)=(f\circ F)(\varepsilon_t,\varepsilon_{t-1},\dots)$ is itself a
causal functional of the same innovations; its coupled version, obtained by
replacing $\varepsilon_0$ with the independent copy $\widetilde\varepsilon_0$,
is therefore $\widetilde Y_t=f(\widetilde X_t)$. Consequently the order-$q$
functional dependence coefficient of $(Y_t)$ at lag $t$ is
\[
  \theta_{q,t}(f)=\big\|Y_t-\widetilde Y_t\big\|_q
  =\big\|f(X_t)-f(\widetilde X_t)\big\|_q .
\]
By condition~\eqref{eq:condLipsPoy}, almost surely,
\[
  \big|f(X_t)-f(\widetilde X_t)\big|
  \le C\,W_t\,D_t,\qquad
  W_t:=1+\|X_t\|^{\alpha}+\|\widetilde X_t\|^{\alpha},\quad
  D_t:=\|X_t-\widetilde X_t\| .
\]
As $\tfrac1q=\tfrac1r+\tfrac1s$ with $r,s\in[1,\infty]$, H\"older's inequality
gives
\[
  \theta_{q,t}(f)\le C\,\|W_tD_t\|_q\le C\,\|W_t\|_r\,\|D_t\|_s .
\]
Replacing one innovation by an i.i.d.\ copy preserves the law, so
$\widetilde X_t\stackrel{d}{=}X_t$; by the triangle inequality in $L^r$ and
stationarity,
\[
  \|W_t\|_r
  \le \|1\|_r+\big\|\,\|X_t\|^{\alpha}\big\|_r
      +\big\|\,\|\widetilde X_t\|^{\alpha}\big\|_r
  =1+2\big(\E\|X_1\|^{\alpha r}\big)^{1/r}=:\kappa<\infty,
\]
finite by the moment hypothesis, and independent of $t$. By the definition of
$D_t$ and of the order-$s$ coefficient
$\theta_{s,t}=\|X_t-\widetilde X_t\|_{s}$,
\[
  \|D_t\|_s=\big(\E\|X_t-\widetilde X_t\|^{s}\big)^{1/s}=\theta_{s,t}.
\]
Altogether, with $\bar C:=C\kappa$, for every $t\ge 0$,
\[
  \theta_{q,t}(f)\le \bar C\,\theta_{s,t}. \tag{$\ast$}
\]
Summing $(\ast)$ over $t\ge m$,
\[
  \Theta_{m,q}(f)=\sum_{t=m}^{\infty}\theta_{q,t}(f)
  \le \bar C\sum_{t=m}^{\infty}\theta_{s,t}=\bar C\,\Theta_{m,s}.
\]
Taking $m=0$, $\Theta_{0,q}(f)\le \bar C\,\Theta_{0,s}<\infty$ as soon as
$(X_t)$ is $s$-stable, i.e.\ $(Y_t)$ is $q$-stable.
\hfill $\square$

The following lemma is stated in    \cite{liu2013probability}  as Theorem 2.
\begin{lem}[Nagaev-type inequality under functional dependence
]\label{lem:nagaev}
Let $(U_t)_{t\in\Z}$ be a stationary, centered scalar process,
$\E U_0=0$ and $\E|U_0|^{q}<\infty$ for some $q>2$, admitting a causal
Bernoulli-shift representation, with functional dependence coefficients
$\theta_{q,t}=\|U_t-\widetilde U_t\|_q$ and tail sums
$\Theta_{m,q}=\sum_{i\ge m}\theta_{q,i}$. Write $V_i=\sum_{t=1}^{i}U_t$ and
$V_n^{*}=\max_{1\le i\le n}|V_i|$, and define the Gaussian-like tail function
\[
  G_a(y)=\sum_{j=1}^{\infty}e^{-j^{a}y^{2}},\qquad y>0,\ a>0 .
\]
Let $c_q$ denote a constant depending only on $q$. Then:

\smallskip
\emph{(i)} If
$\displaystyle \nu:=\sum_{j=1}^{\infty}\mu_j<\infty$, where
$\mu_j=\big(j^{\,q/2-1}\,\theta_{q,j}^{\,q}\big)^{1/(q+1)}$, then for all $x>0$,
\[
  \P\big(V_n^{*}\ge x\big)\le
  c_q\,\frac{n}{x^{q}}\big(\nu^{\,q+1}+\|U_1\|_q^{q}\big)
  +4\sum_{j=1}^{\infty}\exp\!\Big(-\frac{c_q\,\mu_j^{2}\,x^{2}}{n\,\nu^{2}\,\theta_{q,j}^{2}}\Big)
  +2\exp\!\Big(-\frac{c_q\,x^{2}}{n\,\|U_1\|_2^{2}}\Big).
\]

\smallskip
\emph{(ii)} If $\Theta_{m,q}=O(m^{-\alpha})$ with $\alpha>\tfrac12-\tfrac1q$,
then there exist positive constants $C_1,C_2$ such that for all $x>0$,
\[
  \P\big(V_n^{*}\ge x\big)\le
  \frac{C_1\,\Theta_{0,q}^{\,q}\,n}{x^{q}}
  +4\,G_{1-2/q}\!\Big(\frac{C_2\,x}{\sqrt{n}\,\Theta_{0,q}}\Big).
\]

\smallskip
\emph{(iii)} If $\Theta_{m,q}=O(m^{-\alpha})$ with $\alpha<\tfrac12-\tfrac1q$,
then there exist positive constants $C_1,C_2$ such that for all $x>0$,
\[
  \P\big(V_n^{*}\ge x\big)\le
  \frac{C_1\,\Theta_{0,q}^{\,q}\,n^{\,q(1/2-\alpha)}}{x^{q}}
  +4\,G_{(q-2)/(q+1)}\!\Big(\frac{C_2\,x}{n^{(2q-1-2\alpha q)/(2+2q)}\,\Theta_{0,q}}\Big).
\]
\end{lem}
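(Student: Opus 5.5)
The statement immediately above is Lemma~\ref{lem:nagaev}, the Nagaev-type inequality of \citet{liu2013probability}, Theorem~2. The paper imports it rather than proves it, so the natural plan is to check that its hypotheses match that source and to recall how that source argues. The approach I would follow, as in \citet{liu2013probability}, is an $m$-dependent approximation combined with a martingale decomposition.

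\emph{Martingale decomposition.} Write $\mathcal P_k=\E[\cdot\mid\mathcal G_k]-\E[\cdot\mid\mathcal G_{k-1}]$, where $\mathcal G_k=\sigma(\varepsilon_j:j\le k)$. This gives $U_t=\sum_{k\le t}\mathcal P_kU_t$, and the standard coupling bound $\|\mathcal P_0U_t\|_q\le\theta_{q,t}$ holds.

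\emph{Blocking.} Introduce dyadic truncation levels $m_l=2^l$ and set $U_t^{(l)}=\E[U_t\mid\varepsilon_{t-m_l},\dots,\varepsilon_t]$, so that
\[
V_n=\sum_l\big(V_n^{(l)}-V_n^{(l-1)}\big)+V_n^{(0)}.
\]
Each difference $V_n^{(l)}-V_n^{(l-1)}$ is a sum of $m_l$-dependent variables, with $L^q$ norm controlled by $\theta_{q,m_{l-1}}$ or by the tail sums $\Theta_{m_{l-1},q}$. Each such block sum then splits into alternating blocks of length $m_l$ that are independent.

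\emph{Fuk--Nagaev for each piece.} To the independent block sums I would apply the classical Fuk--Nagaev inequality for independent variables with $q$-th moments. This yields a polynomial term of order $n x_l^{-q}$ times the $q$-th moment of a block. Rosenthal and Burkholder give that moment as $\lesssim m_l^{q/2}\theta^q$. It also yields a Gaussian term $\exp(-c x_l^2/(n\,\theta^2))$. The maximal version $V_n^*$ follows from Ottaviani/Lévy-type inequalities, or directly from Doob applied to the martingale parts.

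\emph{Allocating the threshold.} Split the threshold as $x=\sum_l x_l$ with weights proportional to $\mu_j$. The weighted choice of $\mu_j=(j^{q/2-1}\theta_{q,j}^q)^{1/(q+1)}$ arises from optimizing this split, and Hölder's inequality sums the polynomial terms into $\nu^{q+1}$. This gives (i). Under the polynomial decay assumptions, substituting $\Theta_{m,q}=O(m^{-\alpha})$ evaluates the sum of exponentials as the function $G_a$, with exponent $a=1-2/q$ in the short-memory case and $a=(q-2)/(q+1)$ in the long-memory case. The different $n$-scalings in (ii) and (iii) come from whether $\sum_l 2^{l(q/2-1)}\Theta_{2^l,q}^q$ converges.

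\emph{Main obstacle.} The delicate step is the threshold allocation across dyadic levels, which is needed to get the exact exponents. Since the result is cited verbatim, for the paper I would instead verify the hypotheses in each application. Under Assumption~\ref{ass:stationaryDependence} the decay is geometric, so case (ii) applies. Taking $x\asymp\sqrt{n\log n}$ then makes the polynomial term $O(n^{1-q/2}(\log n)^{-q/2})=o(n^{-1/2})$ because $q\ge3$, and makes the $G$ term decay faster than any power once the constant is large.
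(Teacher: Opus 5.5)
Your proposal matches the paper. The lemma is imported verbatim from \citet[Theorem~2]{liu2013probability} without an independent proof, and your closing check (geometric decay puts you in case~(ii), then take $x\asymp\sqrt{n\log n}$ with $q\ge 3$) is exactly how Lemma~\ref{cor:nagaev-rate} uses it.

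One correction to your recollection of the source: the weights $\mu_j$ and the factor $\nu^{q+1}$ in (i) do not come from dyadic levels $2^l$. They come from the per-lag $j$-dependent pieces $\E[U_t\mid\varepsilon_{t-j},\dots,\varepsilon_t]-\E[U_t\mid\varepsilon_{t-j+1},\dots,\varepsilon_t]$, cut into blocks of length $j$ with block norm $\lesssim\sqrt j\,\theta_{q,j}$ by Burkholder. The threshold is split as $x_j=x\mu_j/\nu$, which gives exactly $\sum_j (n/j)\,(\sqrt j\,\theta_{q,j})^q/x_j^q=n\nu^{q+1}/x^q$.
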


\paragraph*{Proof of the lemma \ref{lem:nagaev}}
See \citet[Theorem~2]{liu2013probability}.
\hfill $\square$

The conclusions of Lemma~\ref{lem:nagaev}, in particular case~(iii), yield
concentration even for strongly dependent sequences, namely those whose
functional dependence coefficients $\Theta_{m,q}$ decay only at a slow
polynomial rate. For the sake of readability, however, we work under the
assumption of geometric decay of $\Theta_{m,q}$, which trivially fulfils the
rate condition of case~(ii) (indeed of case~(i)).

\begin{lem}\label{cor:nagaev-rate}
Under the assumptions of Lemma~\ref{lem:nagaev}\,(ii), there exists a positive
constant $a$, depending only on $q$ and $\Theta_{0,q}$, such that
\[
  \P\big(|V_n|\ge a\sqrt{n\log n}\,\big)=O(b_n),
  \qquad
  b_n=n^{-(q-2)/2}(\log n)^{-q/2}.
\]
More precisely, any
$a>\dfrac{\Theta_{0,q}}{C_2}\sqrt{\dfrac{q-2}{2}}$ works.
\end{lem}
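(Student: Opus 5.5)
We apply Lemma~\ref{lem:nagaev}\,(ii) with the threshold $x=a\sqrt{n\log n}$. Since $V_n^{*}\ge|V_n|$, it is enough to bound $\P(V_n^{*}\ge x)$. We then control the two terms of that inequality separately: the polynomial term and the Gaussian-like term $4G_{1-2/q}(\cdot)$. The aim is to show that both are $O(b_n)$ once $a$ exceeds the stated threshold.

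\emph{Polynomial term.} Substituting $x=a\sqrt{n\log n}$ gives
\[
  \frac{C_1\Theta_{0,q}^{q}\,n}{a^{q}n^{q/2}(\log n)^{q/2}}
  =\frac{C_1\Theta_{0,q}^{q}}{a^{q}}\,n^{-(q-2)/2}(\log n)^{-q/2}
  =O(b_n).
\]
This holds for every $a>0$, so this term places no restriction on $a$.

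\emph{Gaussian-like term.} Set $y=C_2x/(\sqrt n\,\Theta_{0,q})=(C_2a/\Theta_{0,q})\sqrt{\log n}$ and $\beta=1-2/q\in(0,1)$, which is positive because $q>2$. Then $y^{2}=\kappa\log n$ with $\kappa=(C_2a/\Theta_{0,q})^{2}$, and
\[
  G_\beta(y)=\sum_{j\ge1}e^{-j^{\beta}y^{2}}=\sum_{j\ge1}n^{-\kappa j^{\beta}}.
\]
The $j=1$ term equals $n^{-\kappa}$. For the tail, note that $j^{\beta}\ge1$ for all $j\ge1$. Write $\kappa j^{\beta}=\kappa+\kappa(j^{\beta}-1)$. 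Because $n\ge2$ (or $n\ge e$), each term satisfies $n^{-\kappa j^\beta}\le n^{-\kappa}\,e^{-\kappa(j^{\beta}-1)\log 2}$. The series $\sum_j e^{-c\,j^{\beta}}$ converges for any $c>0$ and $\beta>0$. Hence $G_\beta(y)\le C_\kappa\,n^{-\kappa}$, with $C_\kappa$ independent of $n$. This uniform summation over $j$ at a fixed positive exponent is the only technical point, and the convergence of the stretched-exponential series handles it.

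\emph{Choice of $a$.} We need $n^{-\kappa}=O(b_n)=O\big(n^{-(q-2)/2}(\log n)^{-q/2}\big)$. This holds as soon as $\kappa>(q-2)/2$, because the strict inequality absorbs the logarithmic factor: $n^{-\kappa+(q-2)/2}(\log n)^{q/2}\to0$. The condition $\kappa>(q-2)/2$ is exactly
\[
  a>\frac{\Theta_{0,q}}{C_2}\sqrt{\frac{q-2}{2}},
\]
which is the stated threshold.

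Adding the two bounds gives $\P(|V_n|\ge a\sqrt{n\log n})=O(b_n)$. The constant $a$ depends only on $q$ and $\Theta_{0,q}$, through $C_2$, which is itself a constant from Lemma~\ref{lem:nagaev}\,(ii).
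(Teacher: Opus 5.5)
Your proposal is correct and follows the paper's proof. Both arguments set $x=a\sqrt{n\log n}$ in Lemma~\ref{lem:nagaev}\,(ii), note that the polynomial term is exactly of order $b_n$, and bound the Gaussian-like term by factoring out $e^{-y^2}=n^{-\kappa}$, which is $o(b_n)$ once $\kappa=(C_2a/\Theta_{0,q})^2>(q-2)/2$. The paper does the factoring via $G_\beta(y)\le e\,G_\beta(1)\,e^{-y^2}$ for $y\ge1$, while you use $\log n\ge\log 2$; you also state explicitly that $|V_n|\le V_n^{*}$, which the paper leaves implicit.
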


\paragraph*{Proof of the lemma \ref{cor:nagaev-rate}}
Apply Lemma~\ref{lem:nagaev}\,(ii) with $x=a\sqrt{n\log n}$. The heavy-tail
term equals $C_1\Theta_{0,q}^{q}a^{-q}\,n^{-(q-2)/2}(\log n)^{-q/2}=O(b_n)$.
For the second term, the argument of $G_{1-2/q}$ is
$y_n=(C_2 a/\Theta_{0,q})\sqrt{\log n}\to\infty$; since
$G_a(y)\le e\,G_a(1)\,e^{-y^2}$ for $y\ge 1$, the term is bounded by
$\kappa_q\,n^{-(C_2 a/\Theta_{0,q})^2}$ with $\kappa_q=4e\,G_{1-2/q}(1)$. The
stated lower bound on $a$ makes the exponent $(C_2a/\Theta_{0,q})^2$ exceed
$(q-2)/2$, so this term is $o(b_n)$, and the two bounds combine to $O(b_n)$.
\hfill $\square$

The rate $b_n=n^{-(q-2)/2}(\log n)^{-q/2}$ should be read against the
independent case. For i.i.d.\ summands the central limit theorem gives
$V_n/\sqrt n=O_{\P}(1)$, so $\P(|V_n|\ge a\sqrt{n\log n})\to 0$ for every
$a>0$; in fact, by the classical Nagaev inequality  (the case
$\theta_{q,j}=0$, $j\ge1$), the same threshold yields a tail of order
$n\,x^{-q}=n^{-(q-2)/2}(\log n)^{-q/2}$, \emph{identical} to $b_n$. Thus, at
this deviation level, dependence with geometrically (or sufficiently fast polynomially) decaying functional dependence coefficients incurs \emph{no loss}
in the polynomial rate: the effect of dependence is confined to the constants,
through $\Theta_{0,q}$. Finally,   $b_n=o(n^{-1/2})$ if and only if $q\ge 3$, whereas for $2<q<3$ one has
instead $n^{-1/2}=o(b_n)$, i.e.\ $b_n$ is the slower rate.

 \paragraph*{Proof of Lemma~\ref{lem:controlScore}}
$S_n=\frac1n\sum_{t=1}^{n}K(\bV_t)\in\R^{d}$.

\medskip\noindent\emph{Step 1 (centering and reduction to coordinates).}
By \eqref{eq::nonlinear}, $\E[Y_t\mid\mathcal F_{t-1}]=g(\eta_t(\theta_0))$, and
$Z_{t-1}$ is $\mathcal F_{t-1}$-measurable, hence
$\E[K(\bV_t)\mid\mathcal F_{t-1}]=0$.
Thus $(K(\bV_t))_t$ is a stationary, centered, $\R^{d}$-valued sequence. Writing
$S_n=(S_n^{(1)},\dots,S_n^{(d)})$ and using $\|S_n\|\le\sqrt d\,\max_{\ell}|S_n^{(\ell)}|$,
\begin{equation}\label{eq:score-union}
  \P\big(\|S_n\|> x\big)\le\sum_{\ell=1}^{d}\P\big(|S_n^{(\ell)}|> x/\sqrt d\big).
\end{equation}

\medskip\noindent\emph{Step 2 ($q$-stability of the summand).}
Fix $\delta>0$ and $q\ge 3$ as in Assumption~\ref{ass:stationaryDependence}, and set
\[
  s=q+\delta,\qquad r=\frac{q(q+\delta)}{\delta},
  \qquad\text{so that}\qquad \frac1q=\frac1r+\frac1s .
\]
The  sequence $\bV_t=\big((Y_{t-i})_{i=0:p},X_{t-1}\big)$ is a stationary causal
functional of the innovations, and, since $\bV_t-\wt\bV_t$ collects the increments
of $Y$ at lags $t,\dots,t-p$ and of $X$ at lag $t-1$,
\[
  \theta_{s,t}(\bV)=\|\bV_t-\wt\bV_t\|_s
  \le\sum_{i=0}^{p}\theta_{s,t-i}(Y)+\theta_{s,t-1}(X).
\]
Summing over $t\ge m$ and using the geometric decay
$\Theta_{\cdot,s}(Y),\Theta_{\cdot,s}(X)=O(\rho^{\cdot})$ of
Assumption~\ref{ass:stationaryDependence},
\begin{equation}\label{eq:window-stable}
  \Theta_{m,s}(\bV)\le\sum_{i=0}^{p}\Theta_{m-i,s}(Y)+\Theta_{m-1,s}(X)
  \lesssim\rho^{m}.
\end{equation}
The bound \eqref{eq:score} is exactly condition \eqref{eq:condLipsPoy} for the
$\R^{d}$-valued map $K$ with $\alpha=1$ (the Euclidean-norm version of
Lemma~\ref{lem:fdm-transfer} holds with the identical proof), and the required
moment $\E\|\bV_1\|^{\alpha r}=\E\|\bV_1\|^{r}<\infty$ follows from
Assumption~\ref{ass:stationaryDependence}, which provides a finite moment of order
$2q(q+\delta)/\delta=2r\ge r$. Lemma~\ref{lem:fdm-transfer} and
\eqref{eq:window-stable} then give
\[
  \theta_{q,t}(K):=\|K(\bV_t)-K(\wt\bV_t)\|_q\le\bar C\,\theta_{s,t}(\bV),
  \qquad\text{hence}\qquad
  \Theta_{m,q}(K)\le\bar C\,\Theta_{m,s}(\bV)\lesssim\rho^{m}.
\]
Each coordinate inherits this, since
$\theta_{q,t}(K^{(\ell)})\le\theta_{q,t}(K)$: the process
$U_t^{(\ell)}:=K^{(\ell)}(\bV_t)$ is stationary, centered, scalar and $q$-stable,
with $\Theta_{m,q}(K^{(\ell)})\lesssim\rho^{m}$ and $\E|U_0^{(\ell)}|^{q}<\infty$.

\medskip\noindent\emph{Step 3 (per-coordinate rate).}
Geometric decay yields $\Theta_{m,q}(K^{(\ell)})=O(m^{-\alpha})$ for every
$\alpha>0$, so the rate condition of Lemma~\ref{lem:nagaev}\,(ii) holds. Since
$S_n^{(\ell)}=\frac1n\sum_{t=1}^{n}U_t^{(\ell)}=\frac1n V_n^{(\ell)}$, applying
Corollary~\ref{cor:nagaev-rate} to $(U_t^{(\ell)})$ with threshold
$(a/\sqrt d)\sqrt{n\log n}$ gives
\[
  \P\Big(|S_n^{(\ell)}|>\tfrac{a}{\sqrt d}\sqrt{\tfrac{\log n}{n}}\Big)
  =\P\big(|V_n^{(\ell)}|>\tfrac{a}{\sqrt d}\sqrt{n\log n}\big)
  =O(b_n),\qquad b_n=n^{-(q-2)/2}(\log n)^{-q/2},
\]
provided $a/\sqrt d>\Theta_{0,q}(K^{(\ell)})\,C_2^{-1}\sqrt{(q-2)/2}$.

\medskip\noindent\emph{Step 4 (union bound).}
Choose
$a>\sqrt d\,\max_{1\le\ell\le d}\Theta_{0,q}(K^{(\ell)})\,C_2^{-1}\sqrt{(q-2)/2}$,
finite because $d<\infty$. Then the bound of Step 3 holds for all $\ell$ with
$x=a\sqrt{\log n/n}$, and \eqref{eq:score-union} gives
\[
  \P\Big(\|S_n\|>a\sqrt{\tfrac{\log n}{n}}\Big)
  \le\sum_{\ell=1}^{d}O(b_n)=O(b_n).
\]
Since $q\ge 3$, $b_n=n^{-(q-2)/2}(\log n)^{-q/2}=o(n^{-1/2})$, which is the claim.
\hfill$\square$

\paragraph*{Proof of Lemma~\ref{lem:pbHessianBound}}
Set $s=q+\delta$ and $r=q(q+\delta)/\delta$, so $\tfrac1q=\tfrac1r+\tfrac1s$.

\medskip\noindent\emph{Step 1 ($\lambda_0>0$).}
The matrices $A_0,A_n$ are symmetric, and for $u\in\R^{d}$,
$u^{\top}A_0u=\E\big[g'(\eta_1(\theta_0))\,(u^{\top}Z_0)^{2}\big]$. Since $g'>0$,
the integrand is nonnegative and vanishes only where $u^{\top}Z_0=0$; hence
$u^{\top}A_0u=0$ forces $u^{\top}Z_0=0$ a.s., and
Assumption~\ref{ass:pdHessian} gives $u=0$. Thus $A_0\succ0$ and $\lambda_0>0$.

\medskip\noindent\emph{Step 2 (reduction to the entries).}
By Weyl's inequality, $\lambda_{\min}(A_0)\le\lambda_{\min}(A_n)+\|A_n-A_0\|_{\mathrm{op}}$,
so $\{\lambda_{\min}(A_n)\le\lambda_0/2\}\subseteq\{\|A_n-A_0\|_{\mathrm{op}}\ge\lambda_0/2\}$.
Using $\|M\|_{\mathrm{op}}\le\|M\|_{F}\le d\max_{j,k}|M_{jk}|$ with $\tau:=\lambda_0/(2d)$,
\begin{equation}\label{eq:hess-union}
  \P\big(\lambda_{\min}(A_n)\le\lambda_0/2\big)
  \le\P\Big(\max_{1\le j,k\le d}\big|(A_n-A_0)_{jk}\big|\ge\tau\Big)
  \le\sum_{j,k=1}^{d}\P\big(|(A_n-A_0)_{jk}|\ge\tau\big).
\end{equation}

\medskip\noindent\emph{Step 3 ($q$-stability of each entry).}
Fix $(j,k)$ and set $W_t:=\wt K(\bV_t)_{jk}-(A_0)_{jk}$, so that
$(A_n-A_0)_{jk}=\frac1n\sum_{t=1}^{n}W_t$. The scalar map
$\bV_t\mapsto\wt K(\bV_t)_{jk}$ satisfies \eqref{eq:condLipsPoy} with $\alpha=2$,
because \eqref{eq:grad-score} yields
$|\wt K(\bV_t)_{jk}-\wt K(\wt\bV_t)_{jk}|\le\|\wt K(\bV_t)-\wt K(\wt\bV_t)\|
\lesssim(1+\|\bV_t\|^{2}+\|\wt\bV_t\|^{2})\|\bV_t-\wt\bV_t\|$. The required
moment $\E\|\bV_1\|^{2r}<\infty$ holds by Assumption~\ref{ass:stationaryDependence},
which provides a finite moment of order $2q(q+\delta)/\delta=2r$; moreover
$\Theta_{m,s}(\bV)\lesssim\rho^{m}$ (as in the proof of
Lemma~\ref{lem:controlScore}). Lemma~\ref{lem:fdm-transfer} with $\alpha=2$ then
gives $\Theta_{m,q}(\wt K_{jk})\le\bar C\,\Theta_{m,s}(\bV)\lesssim\rho^{m}$.
Centering does not affect the functional dependence coefficients, so
$(W_t)$ is stationary, centered, scalar, causal and $q$-stable with
$\Theta_{m,q}(W)\lesssim\rho^{m}$; its $q$-th moment is finite since
$\E|\wt K(\bV_0)_{jk}|^{q}\le\|g'\|_\infty^{q}\,\E\|Z_{-1}\|^{2q}
\lesssim 1+\E\|\bV_0\|^{2q}<\infty$, using $2q\le2r$.

\medskip\noindent\emph{Step 4 (per-entry rate).}
By Step 3, $(W_t)$ is a stationary, centered, scalar, $q$-stable process with
$\Theta_{m,q}(W)\lesssim\rho^{m}$ and $\E|W_0|^{q}<\infty$, so
Corollary~\ref{cor:nagaev-rate} applies to it. For $n$ large enough that
$a\sqrt{\log n/n}\le\tau$, monotonicity of $x\mapsto\P(|\frac1n\sum_{t\le n}W_t|\ge x)$
and Lemma ~\ref{cor:nagaev-rate} give
\[
  \P\big(|(A_n-A_0)_{jk}|\ge\tau\big)
  =\P\Big(\big|\tfrac1n\textstyle\sum_{t=1}^{n}W_t\big|\ge\tau\Big)
  \le\P\Big(\big|\tfrac1n\textstyle\sum_{t=1}^{n}W_t\big|\ge a\sqrt{\tfrac{\log n}{n}}\Big)
  =O(b_n)=o(n^{-1/2}),
\]
where $b_n=n^{-(q-2)/2}(\log n)^{-q/2}$ and $q\ge3$.

\medskip\noindent\emph{Step 5 (union bound).}
Summing the $d^{2}$ entrywise bounds in \eqref{eq:hess-union},
$\P(\lambda_{\min}(A_n)\le\lambda_0/2)\le d^{2}\,o(n^{-1/2})=o(n^{-1/2})$, which is
the claim.
\hfill$\square$

\paragraph*{Proof of Lemma~\ref{lem:controlRn}}
Set $s=q+\delta$ and $r=q(q+\delta)/\delta$, so $\tfrac1q=\tfrac1r+\tfrac1s$.

\medskip\noindent\emph{Step 1 (deterministic bound \eqref{eq:Rn-det}).}
Since $\eta_t(\theta)=\theta^{\top}Z_{t-1}$, the remainder is
$R_n(h)=\frac1n\sum_{t=1}^{n}c_t(h)\,Z_{t-1}\big(Z_{t-1}^{\top}h\big)$ with the
scalar $c_t(h)=\int_0^1\big[g'(\eta_t(\theta_0+sh))-g'(\eta_t(\theta_0))\big]\,ds$.
By the Lipschitz property of $g'$ and $\eta_t(\theta_0+sh)-\eta_t(\theta_0)=s\,h^{\top}Z_{t-1}$,
\[
  |c_t(h)|\le\int_0^1 C\,|s\,h^{\top}Z_{t-1}|\,ds=\tfrac{C}{2}\,|h^{\top}Z_{t-1}| .
\]
Hence, bounding one factor $|h^{\top}Z_{t-1}|$ by
$\|h\|\big(1+\sum_{i=1}^{p}|Y_{t-i}|+\|X_{t-1}\|\big)$ and the other by
$\|h\|\,\|Z_{t-1}\|$,
\[
  \|R_n(h)\|
  \le\frac1n\sum_{t=1}^{n}|c_t(h)|\,|Z_{t-1}^{\top}h|\,\|Z_{t-1}\|
  \le\frac{C}{2}\,\|h\|^{2}\,\frac1n\sum_{t=1}^{n}
     \Big(1+\sum_{i=1}^{p}|Y_{t-i}|+\|X_{t-1}\|\Big)\|Z_{t-1}\|^{2}
  =\frac{C}{2}\,\|h\|^{2}\,\bar R_n .
\]
Taking the supremum over $\|h\|\le r_n$ gives \eqref{eq:Rn-det}.

\medskip\noindent\emph{Step 2 (reduction).}
By \eqref{eq:Rn-det}, $\{\sup_{\|h\|\le r_n}\|R_n(h)\|\ge a_R r_n^{2}\}
\subseteq\{\bar R_n\ge 2a_R/C\}$, an event that no longer involves $r_n$; it
therefore suffices to show $\P(\bar R_n\ge 2a_R/C)=o(n^{-1/2})$.

\medskip\noindent\emph{Step 3 (limit and choice of $a_R$).}
Because $\bar K(\bV_0)\le(1+\|\bV_0\|)^{3}$ is integrable under
Assumption~\ref{ass:stationaryDependence}, the ergodic theorem gives
$\bar R_n\to\mu:=\E\bar K(\bV_0)<\infty$ almost surely. Fix any $a_R>\tfrac{C}{2}\mu$
and set $\varepsilon:=2a_R/C-\mu>0$, so that
$\P(\bar R_n\ge 2a_R/C)=\P(\bar R_n-\mu\ge\varepsilon)\le\P(|\bar R_n-\mu|\ge\varepsilon)$.

\medskip\noindent\emph{Step 4 ($q$-stability of the summand).}
Let $W_t:=\bar K(\bV_t)-\mu$, a stationary, centered scalar process, with
$\bar R_n-\mu=\frac1n\sum_{t=1}^{n}W_t$. The map
$\bV_t\mapsto\bar K(\bV_t)$ satisfies \eqref{eq:condLipsPoy} with $\alpha=2$,
since \eqref{eq:error-score} gives
$|\bar K(\bV_t)-\bar K(\wt\bV_t)|\lesssim(1+\|\bV_t\|^{2}+\|\wt\bV_t\|^{2})\|\bV_t-\wt\bV_t\|$;
the required moment $\E\|\bV_1\|^{2r}<\infty$ holds by
Assumption~\ref{ass:stationaryDependence}, which provides a finite moment of
order $2q(q+\delta)/\delta=2r$, and $\Theta_{m,s}(\bV)\lesssim\rho^{m}$ (as in
the proof of Lemma~\ref{lem:controlScore}). Lemma~\ref{lem:fdm-transfer} with
$\alpha=2$ then gives
$\Theta_{m,q}(\bar K)\le\bar C\,\Theta_{m,s}(\bV)\lesssim\rho^{m}$, and centering
leaves the coefficients unchanged, so $\Theta_{m,q}(W)\lesssim\rho^{m}$. Its
$q$-th moment is finite, $\E|W_0|^{q}\lesssim 1+\E\|\bV_0\|^{3q}<\infty$, since
$3q\le 2r$ by $\delta\le 2q$ (Assumption~\ref{ass:stationaryDependence}).

\medskip\noindent\emph{Step 5 (Fuk--Nagaev at a fixed level).}
By Step 4, $(W_t)$ is stationary, centered, scalar and $q$-stable with
$\Theta_{m,q}(W)\lesssim\rho^{m}$ and $\E|W_0|^{q}<\infty$, so
Corollary~\ref{cor:nagaev-rate} applies. For $n$ large enough that
$a\sqrt{\log n/n}\le\varepsilon$, monotonicity and
Lemma ~\ref{cor:nagaev-rate} give
\[
  \P\big(|\bar R_n-\mu|\ge\varepsilon\big)
  =\P\Big(\big|\tfrac1n\textstyle\sum_{t=1}^{n}W_t\big|\ge\varepsilon\Big)
  \le\P\Big(\big|\tfrac1n\textstyle\sum_{t=1}^{n}W_t\big|\ge a\sqrt{\tfrac{\log n}{n}}\Big)
  =O(b_n)=o(n^{-1/2}),
\]
and by Steps 2--3 the claim follows.
\hfill$\square$

\paragraph*{Proof of the lemma \ref{lem:be-projection}}
Fix $u$, $\|u\|=1$, put $c_u=L_0^{-1/2}u$ (so $\|c_u\|\le\lambda_L^{-1/2}$),
and set the scalar stationary causal process
$U_t^{(u)}=c_u^\top K(\bV_t)=e_t\,(c_u^\top Z_{t-1})$, for which
$u^\top\sqrt n\,L_0^{-1/2}S_n=n^{-1/2}\sum_{t=1}^{n}U_t^{(u)}$. We verify
Assumption~2.1 of \citet{jirak2016berry} at $p=3$, uniformly in $u$.

\emph{(i) Mean and moment.} As $e_t$ is a martingale difference and $Z_{t-1}$ is
predictable, $\E[U_t^{(u)}]=0$. Since $q\ge 3$, the third moment is finite:
$\|U_0^{(u)}\|_3\le\lambda_L^{-1/2}\big\|\,\|K(\bV_0)\|\,\big\|_3
\le\lambda_L^{-1/2}\big\|\,\|K(\bV_0)\|\,\big\|_q<\infty$, the last bound holding
by Assumption~\ref{ass:stationaryDependence} (as in
Lemma~\ref{lem:controlScore}).

\emph{(ii) Weak dependence.} The coupled version of $U_t^{(u)}$ is
$c_u^\top K(\wt\bV_t)$, so
$\theta_{3,l}(U^{(u)})\le\|c_u\|\,\theta_{3,l}(K)
\le\lambda_L^{-1/2}\theta_{q,l}(K)$, using $\|\cdot\|_3\le\|\cdot\|_q$. By
Lemma~\ref{lem:fdm-transfer} and the geometric decay in
Assumption~\ref{ass:stationaryDependence}, $\theta_{q,l}(K)\lesssim\rho^{l}$.
Hence
\[
  \Lambda(u):=\sum_{l\ge1}l^2\,\theta_{3,l}(U^{(u)})
  \le\lambda_L^{-1/2}\sum_{l\ge1}l^2\,\theta_{q,l}(K)=:D<\infty,
\]
a bound independent of $u$. 

\emph{(iii) Standardization.} As $(U_t^{(u)})$ is a martingale difference, its
long-run variance has no cross terms:
$s^2(u)=\E[(U_0^{(u)})^2]=c_u^\top L_0 c_u=u^\top u=1$, and likewise
$\big\|\sum_{t=1}^{n}U_t^{(u)}\big\|_2^2=\sum_{t=1}^{n}\E[(U_t^{(u)})^2]=n$, so
$s_n^2(u)=1$ exactly. Jirak's self-normalized statistic therefore equals the
projection.

 $\lambda_L>0$, so all of the above is well defined.
\citet[Theorem~2.2]{jirak2016berry} at $p=3$ then gives, for each $u$,
\[
  \sup_{x\in\R}\Big|\P\big(u^\top\sqrt n\,L_0^{-1/2}S_n\le x\big)-\Phi(x)\Big|
  \le\frac{B(\Lambda(u),s^2(u))}{n^{\,3/2-1}}
  =\frac{B(\Lambda(u),1)}{\sqrt n}.
\]
The constant depends on $u$ only through $(\Lambda(u),s^2(u))\in[0,D]\times\{1\}$,
a fixed bounded set; hence $B^\ast:=\sup_{\|u\|=1}B(\Lambda(u),1)<\infty$,
independent of $u$.
\hfill $\square$

\paragraph*{Proof of Lemma~\ref{lem:be-remainder}}
Write $r_n=a\sqrt{\log n/n}$ and $\lambda_L=\lambda_{\min}(L_0)>0$.

\emph{Step 0 (concentration of $L_n$).}
Each entry $(L_n-L_0)_{jk}=\frac1n\sum_{t}\big(e_t^2 Z_{t-1,j}Z_{t-1,k}-(L_0)_{jk}\big)$
is a centered average of a stationary scalar process. By the envelope
computation above ($\alpha=3$), Lemma~\ref{lem:fdm-transfer} under
Assumption~\ref{ass:moment3r} gives $\Theta_{m,q}\lesssim\rho^{m}$ and a finite
$q$-th moment; Corollary~\ref{cor:nagaev-rate} and a union bound over the $d^2$
entries then yield $a_L>0$ with
\[
  \P\big(\|L_n-L_0\|_{\mathrm{op}}\ge a_L\sqrt{\log n/n}\big)=o(n^{-1/2}).
\]
On the complementary event $\|L_n-L_0\|_{\mathrm{op}}<a_L\sqrt{\log n/n}\to0$, so
by Weyl $\lambda_{\min}(L_n)\ge\lambda_L/2$ for $n$ large.

\emph{Step 1 (good event).}
Let $\mathcal G_n$ be the intersection of the following events, each of
probability $1-o(n^{-1/2})$:
\begin{itemize}
\item $\|S_n\|\le a_S\sqrt{\log n/n}$ (Lemma~\ref{lem:controlScore});
\item $\widehat\theta_n$ is the unique solution of \eqref{eq:qmle} and
      $\|h_n\|\le r_n$ (Theorem~\ref{th:concentration});
\item $\sup_{\|h\|\le r_n}\|R_n(h)\|\le a_R r_n^{2}$ (Lemma~\ref{lem:controlRn});
\item $\lambda_{\min}(A_n)\ge\lambda_0/2$ and $\|A_n\|_{\mathrm{op}}\le 2\|A_0\|_{\mathrm{op}}$
      (Lemma~\ref{lem:pbHessianBound});
\item $\|L_n-L_0\|_{\mathrm{op}}\le a_L\sqrt{\log n/n}$ and
      $\lambda_{\min}(L_n)\ge\lambda_L/2$ (Step 0);
\item $\bar D_n^{A}:=\frac1n\sum_t\|Z_{t-1}\|^{3}\le 2\mu_A$ and
      $\bar D_n^{L}:=\frac1n\sum_t\|Z_{t-1}\|^{3}(1+\|\bV_t\|)\le 2\mu_L$,
      where $\mu_A,\mu_L<\infty$ are the corresponding means.
\end{itemize}
The last two are averages of degree-three and degree-four functionals of
$\bV_t$; under Assumption~\ref{ass:moment3r} they are $q$-stable with geometric
decay and finite $q$-th moment, so Lemma ~\ref{cor:nagaev-rate} at a fixed
level gives $o(n^{-1/2})$ for their complements. Hence
$\P(\mathcal G_n)=1-o(n^{-1/2})$. Note $\mathcal G_n$ does not depend on $u$.

Fix $\|u\|=1$ and work on $\mathcal G_n$. All bounds below use $\|u\|=1$ and are
operator-norm bounds, hence uniform in $u$.

\emph{Step 2 (term $T_1$).} With $\|M^{-1/2}-M_0^{-1/2}\|_{\mathrm{op}}\le
c(\lambda_L)\|M-M_0\|_{\mathrm{op}}$ on $\{M,M_0\succeq(\lambda_L/2)I\}$,
\[
  |T_1|\le c(\lambda_L)\,\|L_n-L_0\|_{\mathrm{op}}\,\sqrt n\|S_n\|
  \le c(\lambda_L)\,a_L\sqrt{\tfrac{\log n}{n}}\cdot a_S\sqrt{\log n}
  = c(\lambda_L)a_La_S\,\frac{\log n}{\sqrt n}.
\]

\emph{Step 3 (term $T_2$).} On $\{\lambda_{\min}(L_n)\ge\lambda_L/2\}$,
$\|L_n^{-1/2}\|_{\mathrm{op}}\le\sqrt{2/\lambda_L}$, and $\|h_n\|\le r_n$ gives
$\|R_n(h_n)\|\le\sup_{\|h\|\le r_n}\|R_n(h)\|\le a_R r_n^{2}$; hence
\[
  |T_2|\le\sqrt{2/\lambda_L}\,a_R r_n^{2}
  =\sqrt{2/\lambda_L}\,a_R a^2\,\frac{\log n}{n}=O\!\Big(\frac{\log n}{n}\Big).
\]

\emph{Step 4 (term $T_3$).} Decompose
$\widehat L_n^{-1/2}\widehat A_n-L_n^{-1/2}A_n
=\widehat L_n^{-1/2}(\widehat A_n-A_n)+(\widehat L_n^{-1/2}-L_n^{-1/2})A_n$.
By $g'$ Lipschitz and $\eta_t(\widehat\theta_n)-\eta_t(\theta_0)=h_n^\top Z_{t-1}$,
\[
  \|\widehat A_n-A_n\|_{\mathrm{op}}
  \le C\|h_n\|\,\bar D_n^{A}\le 2C\mu_A\|h_n\|,
\]
and similarly, by $g$ Lipschitz,
$\|\widehat L_n-L_n\|_{\mathrm{op}}\le C\|h_n\|\,\bar D_n^{L}\le 2C\mu_L\|h_n\|$;
since $\|h_n\|\le r_n\to0$, this gives $\lambda_{\min}(\widehat L_n)\ge\lambda_L/4$
and $\|\widehat L_n^{-1/2}\|_{\mathrm{op}}\le 2/\sqrt{\lambda_L}$, and
$\|\widehat L_n^{-1/2}-L_n^{-1/2}\|_{\mathrm{op}}\le c(\lambda_L)\,2C\mu_L\|h_n\|$
for $n$ large. With $\|A_n\|_{\mathrm{op}}\le 2\|A_0\|_{\mathrm{op}}$,
\[
  \|\widehat L_n^{-1/2}\widehat A_n-L_n^{-1/2}A_n\|_{\mathrm{op}}
  \le\Big(\tfrac{2}{\sqrt{\lambda_L}}\,2C\mu_A
        +c(\lambda_L)2C\mu_L\cdot 2\|A_0\|_{\mathrm{op}}\Big)\|h_n\|
  =:C_3\|h_n\|,
\]
so $|T_3|\le C_3\|h_n\|^{2}\le C_3 a^2\,\dfrac{\log n}{n}=O\!\Big(\dfrac{\log n}{n}\Big).$

\emph{Step 5 (conclusion).}
On $\mathcal G_n$, for every $\|u\|=1$ and $n$ large,
\[
  |\widetilde R_n(u)|\le|T_1|+|T_2|+|T_3|
  \le c(\lambda_L)a_La_S\,\frac{\log n}{\sqrt n}+O\!\Big(\frac{\log n}{n}\Big)
  \le \widetilde a_R\,\frac{\log n}{\sqrt n},
\]
for $\widetilde a_R>c(\lambda_L)a_La_S$. As the right-hand bound is $u$-free,
$\sup_{\|u\|=1}|\widetilde R_n(u)|\le\widetilde a_R\log n/\sqrt n$ on
$\mathcal G_n$, and $\P(\mathcal G_n^c)=o(n^{-1/2})$.
\hfill$\square$

\paragraph*{Proof of Lemma~\ref{lem:approxVar}}
Let $\lambda_0=\lambda_{\min}(A_0)>0$ (where we also have    the analogous
$\lambda_L>0$; here it is Assumption~\ref{ass:pdHessian} with $g'>0$). On the
event $\mathcal G_n$ of Lemma~\ref{lem:be-remainder}, of probability
$1-o(n^{-1/2})$, we have simultaneously
\begin{gather*}
  \|A_n-A_0\|_{\op}\le a_A\sqrt{\tfrac{\log n}{n}},\qquad
  \|\widehat L_n-L_0\|_{\op}\le a_L'\sqrt{\tfrac{\log n}{n}},\\
  \|\widehat A_n-A_n\|_{\op}\le C\|h_n\|\,\tfrac1n\textstyle\sum_t\|Z_{t-1}\|^2
  \le 2C\mu_A'\|h_n\|\le 2C\mu_A' a\sqrt{\tfrac{\log n}{n}},
\end{gather*}
the first two from Lemmas~\ref{lem:pbHessianBound} and \ref{lem:be-remainder}
(Step~0), the third from $g'$ Lipschitz together with the ergodic control of
$\frac1n\sum_t\|Z_{t-1}\|^2$ under Assumption~\ref{ass:moment3r}, and
$\|h_n\|\le a\sqrt{\log n/n}$ from Theorem~\ref{th:concentration}. Hence
$\|\widehat A_n-A_0\|_{\op}\le a_A''\sqrt{\log n/n}\to0$ and, by Weyl,
$\lambda_{\min}(\widehat A_n)\ge\lambda_0/2$, so
$\|\widehat A_n^{-1}\|_{\op}\le 2/\lambda_0$ and
$\|\widehat A_n^{-1}-A_0^{-1}\|_{\op}\le\|\widehat A_n^{-1}\|_{\op}\|A_0^{-1}\|_{\op}
\|\widehat A_n-A_0\|_{\op}\le\tfrac{2}{\lambda_0^2}a_A''\sqrt{\log n/n}$.
Writing $M=\widehat A_n^{-1}\widehat L_n\widehat A_n^{-1}$,
$M_0=A_0^{-1}L_0A_0^{-1}$ and using the identity
$M-M_0=(\widehat A_n^{-1}-A_0^{-1})\widehat L_n\widehat A_n^{-1}
+A_0^{-1}(\widehat L_n-L_0)\widehat A_n^{-1}
+A_0^{-1}L_0(\widehat A_n^{-1}-A_0^{-1})$ together with the operator-norm bounds
$\|\widehat L_n\|_{\op}\le\|L_0\|_{\op}+a_L'\sqrt{\log n/n}\le 2\|L_0\|_{\op}$ and
the three displays above, the triangle inequality gives
$\|M-M_0\|_{\op}\le C\sqrt{\log n/n}$ on $\mathcal G_n$, which is the claim.
\hfill$\square$

\end{document}